\newtheorem{Th}{Theorem}[section]
\newtheorem{Prop}[Th]{Proposition}
\newtheorem{Lem}[Th]{Lemma}
\newtheorem{Cor}[Th]{Corollary}
\newtheorem{Rem}[Th]{Remark}
\newenvironment{altproof}[1]
{\noindent%\addvspace{0.3cm}
	{\em Proof of {#1}}.}
{\nopagebreak\mbox{}\hfill $\Box$\par\addvspace{0.5cm}}
\newcommand{\vp}{\varphi}
\newcommand{\eps}{\varepsilon}
\def\id{\mathrm{id}}
\def\Z{\mathbb{Z}}
\def\N{\mathbb{N}}
\def\R{\mathbb{R}}
\def\cl{\mathrm{cl\,}}
\def\P{\mathcal P} %P family
\def\U{\mathcal{U}} %U neighborhood of graph
\newcommand{\cC}{{\mathcal C}}
\newcommand{\cG}{{\mathcal G}}
\newcommand{\cK}{{\mathcal K}}
\newcommand{\cM}{{\mathcal M}}
\newcommand{\cO}{{\mathcal O}}
\newcommand{\cP}{{\mathcal P}}
\newcommand{\cS}{{\mathcal S}}
\newcommand{\cU}{{\mathcal U}}
\newcommand{\Om}{\Omega}
\newcommand{\weakto}{\rightharpoonup}
\def\id{\mathrm{id}}
\newcommand{\tu}{\widetilde{u}}
\newcommand{\tv}{\widetilde{v}}
\numberwithin{equation}{section}
\begin{document}
	\title{Nonradial solutions of nonlinear scalar field equations}
	\author[J. Mederski]{Jaros\l aw Mederski}
	\address[J. Mederski]{\newline\indent 
		Institute of Mathematics,
		\newline\indent
		Polish Academy of Sciences,
		\newline\indent 
		ul. \'Sniadeckich 8, 00-956
		Warszawa, Poland
		\newline\indent
		and
		\newline\indent
		Faculty of Mathematics and Computer Science,
		\newline\indent 
		Nicolaus Copernicus University,
		\newline\indent
		ul. Chopina 12/18, 87-100 Toru\'n, Poland}
	\email{\href{mailto:jmederski@impan.pl}{jmederski@impan.pl}}
	%\date{}
	%\date{\today}
	\maketitle
	
	\pagestyle{myheadings} \markboth{\underline{J. Mederski}}{
		\underline{Nonradial solutions of nonlinear scalar field equations}}

\begin{abstract}
We prove new results concerning the nonlinear scalar field equation
\begin{equation*}
\left\{
\begin{array}{ll}
-\Delta u  = g(u)&\quad \hbox{in }\mathbb{R}^N,\; N\geq 3,\\
u\in H^1(\mathbb{R}^N)&
\end{array}
\right.
\end{equation*}
with a nonlinearity $g$ satisfying the general assumptions due to Berestycki and Lions. In particular, we find at least one nonradial solution for any $N\geq 4$ minimizing the energy functional on the Pohozaev constraint in a subspace of $H^1(\mathbb{R}^N)$ consisting of nonradial functions.  If in addition $N\neq 5$, then there are infinitely many nonradial solutions. These solutions are sign-changing. The results give a positive answer to a question posed by Berestycki and Lions in \cite{BerLions,BerLionsII}. 
Moreover, we build a critical point theory on a topological manifold, which enables us to solve the above equation as well as to treat new elliptic problems.
\end{abstract}

{\bf MSC 2010:} Primary: 35J20, 58E05

{\bf Key words:} Nonlinear scalar field equations, critical point theory, nonradial solutions, concentration compactness, profile decomposition, Lions lemma, Pohozaev manifold.

\section*{Introduction}
\setcounter{section}{1}

We investigate the nonlinear scalar field equation
\begin{equation}\label{eq}
\left\{
\begin{array}{ll}
-\Delta u  = g(u)&\quad \hbox{in }\R^N,\; N\geq 3,\\
u\in H^1(\R^N)&%\quad\hbox{as }|x|\to\infty.
\end{array}
\right.
\end{equation}
under the following general assumptions introduced by Berestycki and Lions  in their fundamental papers \cite{BerLions,BerLionsII}:
\begin{itemize}
	\item[(g0)] $g:\R \to \R$ is continuous and odd,
	\item[(g1)] $-\infty<\liminf_{s\to 0}g(s)/s\leq\limsup_{s\to 0}g(s)/s=-m<0$,
	\item[(g2)] $\limsup_{s\to \infty}g(s)/s^{2^*-1}=0$, where $2^*=\frac{2N}{N-2}$,
	\item[(g3)] There exists  $\xi_0>0$ such that $G(\xi_0)>0$, where 
	$$G(s)=\int_0^s g(t)\, dt\quad\hbox{for }s\in\R.$$
\end{itemize}

The problem appears e.g. in nonlinear optics or in the the study of Bose–Einstein condensates \cite{Gammal,Buryak} and \eqref{eq} describes the propagation of solitons which are special nontrivial solitary wave solutions $\Phi(x,t)=u(x)e^{-i\omega t}$ of the time-dependent Schr\"odinger equation. The general conditions (g0)--(g3) can model a wide range of nonlinear phenomena, e.g. the Kerr effect, the dual power law nonlinearity or the saturation effect arising in nonlinear optics. The parameter $m$ is usually interpreted as a {\em mass} \cite{BerLions,BerLionsII}.\\
\indent
Recall that the existence of a least energy solution $u\in H^1(\R^N)$, which  is positive, spheri\-cally symmetric (radial) and decreasing in $r=|x|$ is established in \cite{BerLions} and the existence of infinitely many radial solutions but not necessarily positive are provided in \cite{BerLionsII}.
Moreover Jeanjean and Tanaka \cite{JeanjeanTanaka} showed that $J(u)=\inf_{\cM} J$, where $\cM$ stands for the {\em Pohozaev manifold} defined as follows
\begin{equation}\label{def:Poh}
\cM:=\Big\{u\in H^1(\R^N)\setminus\{0\}: \int_{\R^N}|\nabla u|^2\,dx=2^*\int_{\R^N}G(u)\, dx\Big\}.
\end{equation}
and
$J:H^1(\R^N)\to\R$ is is the energy functional  given by
\begin{equation}\label{eq:action}
J(u)=\frac12\int_{\R^N} |\nabla u|^2\,dx - \int_{\R^N} G(u)\, dx.
\end{equation}
Recall that $\cM$ contains all nontrivial critical points \cite{BerLions}.\\
\indent Firstly, the aim of this paper is to answer to the problem \cite{BerLionsII}[Section 10.8] concerning the existence and multiplicity of nonradial solutions to \eqref{eq} for dimensions $N\geq 4$ under the almost optimal assumptions (g0)--(g3). The problem in $N=3$ remains open. Secondly, we present a new variational approach based on a critical point theory built on the Pohozaev manifold. Since $g$ is only continuous, this manifold need not be of class $\cC^{1,1}$ and it seems to be difficult to use a standard  critical point theory directly on this constraint. Without the radial symmetry one has to deal with the issues of lack of compactness and we present a concentration-compactness approach in the spirit of Lions \cite{Lions82,Lions84} together with profile decompositions in the spirit of G\'erard \cite{Gerard} and Nawa \cite{Nawa} adapted to a general nonlinearity satisfying (g0)--(g3); see Theorem \ref{ThGerard}. Using these techniques we provide a new proof of the following result. 

\begin{Th}[\hspace{-0.1mm}\cite{BerLions,JeanjeanTanaka}]\label{ThMain1}
	There is a solution  $u\in \cM$ to \eqref{eq} such that 
	$J(u)=\inf_{\cM} J>0$.
\end{Th}

Moreover we find nonradial solutions to  \eqref{eq} provided that $N\geq 4$. Indeed, let us fix $\tau\in\cO(N)$ such that $\tau(x_1,x_2,x_3)=(x_2,x_1,x_3)$ for $x_1,x_2\in\R^M$ and $x_3\in\R^{N-2M}$, where $x=(x_1,x_2,x_3)\in\R^N=\R^M\times\R^M\times \R^{N-2M}$ and $2\leq M\leq N/2$.
We define
\begin{equation}\label{eq:DefOfX}
X_{\tau}:=\big\{u\in H^1(\R^N): u(x)=-u(\tau x)\;\hbox{ for all }x\in\R^N\big\}.
\end{equation}
Clearly, if $u\in X_\tau$ is radial, i.e. $u(x)=u(\rho x)$ for any $\rho \in\cO(N)$, then $u=0$. Hence $X_\tau$ does not contain nontrivial radial functions. Then $\cO_1:=\cO(M)\times \cO(M)\times\id \subset \cO(N)$ acts isometrically on $H^1(\R^N)$ and let $H^1_{\cO_1}(\R^N)$ denote the subspace of invariant functions with respect to $\cO_1$.\\
\indent Our first main result reads as follows.
\begin{Th}\label{ThMain2} Fix $N\geq 4$ and $2\leq M\leq N/2$. Then there is a solution  $u\in \cM\cap X_\tau\cap H^1_{\cO_1}(\R^N)$ to \eqref{eq} such that 
	\begin{equation}\label{eq:thmain2}
	J(u)=\inf_{\cM\cap X_\tau\cap H^1_{\cO_1}(\R^N)}J\geq 2\inf_{\cM} J.
	\end{equation}
\end{Th}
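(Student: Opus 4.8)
The plan is to realize the nonradial solution as a minimizer of $J$ over the symmetric constraint set $M_s:=\cM\cap X_\tau\cap H^1_{\cO_1}(\R^N)$, and to show both that the level $c:=\inf_{M_s}J$ satisfies $c\geq 2\inf_{\cM}J$ and that it is attained by a genuine solution. Every $u\in M_s$ is nonradial since $X_\tau$ carries no nontrivial radial function, and $M_s\neq\emptyset$ can be verified by an explicit test function, so $c$ is well defined. I would establish the lower bound \eqref{eq:thmain2} first, as it holds for \emph{every} $u\in M_s$ and not merely for a minimizer. Writing $\Omega_\pm:=\{x\in\R^N:\pm(|x_1|-|x_2|)>0\}$, the two symmetries together force $u$ to vanish on the cone $\{|x_1|=|x_2|\}$: invariance under $\cO_1$ gives $u(x)=U(|x_1|,|x_2|,x_3)$, while the $\tau$-antisymmetry reads $U(s,t,z)=-U(t,s,z)$, whence $U(s,s,z)=0$. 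Therefore $u_\pm:=u\,\mathbf 1_{\Omega_\pm}\in H^1(\R^N)$, the pieces have disjoint supports with $u=u_++u_-$, and since $\tau$ maps $\Omega_+$ onto $\Omega_-$ with $u\circ\tau=-u$ one has $\int|\nabla u_+|^2=\int|\nabla u_-|^2$ and $\int G(u_+)=\int G(u_-)$. The Pohozaev identity defining $\cM$ in \eqref{def:Poh} then splits as $2\int|\nabla u_+|^2=2^*\cdot 2\int G(u_+)$, so $u_+\in\cM$ (it is nonzero since $u\not\equiv 0$), and $J(u)=J(u_+)+J(u_-)=2J(u_+)\geq 2\inf_{\cM}J$. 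Passing to the infimum yields $c\geq 2\inf_{\cM}J>0$ by Theorem \ref{ThMain1}.

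The substantial part is attaining $c$. Taking a minimizing sequence $(u_n)\subset M_s$, I would show it is bounded in $H^1(\R^N)$ and then feed it into the profile decomposition of Theorem \ref{ThGerard}. The benefit of the group $\cO_1=\cO(m)\times\cO(m)\times\id$ with $m\geq 2$ is that, by a Strauss-type argument, concentration in the $x_1,x_2$ variables is impossible, so the translations produced by the decomposition may be reduced to the noncompact factor $\R^{N-2m}$ (in particular, for $N=2m$ the embedding is already compact). Each profile is then $\cO_1$-invariant and, because $\tau$ commutes with $x_3$-translations, again $\tau$-antisymmetric. I would rule out vanishing, excluded since the constraint keeps $\int G(u_n)=\tfrac{1}{2^*}\int|\nabla u_n|^2$ bounded away from $0$, and rule out splitting into several well-separated bubbles, so that one profile carries all the mass and $u_n$ converges, up to an $x_3$-translation, to a limit $u\in M_s$ with $J(u)\leq c$, hence a minimizer. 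I expect this compactness step to be the main obstacle: for the general Berestycki--Lions nonlinearity the energy and the Pohozaev constraint do not decouple cleanly across profiles, so excluding dichotomy requires the careful, nonlinearity-adapted accounting supplied by the profile decomposition, combined with the factor-$2$ structure above, which makes every nontrivial antisymmetric profile expensive.

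Finally, to see that the minimizer solves \eqref{eq} and not merely the symmetry-constrained problem, I would invoke Palais' principle of symmetric criticality: the isometric $\cO_1\times\{\id,\tau\}$-action leaves both $J$ and $\cM$ invariant, so a critical point of $J|_{\cM}$ on the fixed-point space $X_\tau\cap H^1_{\cO_1}(\R^N)$ is critical for $J|_{\cM}$ on all of $H^1(\R^N)$. Since $\cM$ is a natural constraint, with the scaling $u\mapsto u(\cdot/t)$ removing the Lagrange multiplier exactly as in the least-energy theory behind Theorem \ref{ThMain1}, the limit $u$ is a genuine solution. Together with the first paragraph, this produces $u\in\cM\cap X_\tau\cap H^1_{\cO_1}(\R^N)$ with $J(u)=c\geq 2\inf_{\cM}J$, which is the assertion of \eqref{eq:thmain2}.
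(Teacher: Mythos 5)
Your first paragraph (the lower bound $\inf_{\cM\cap X_\tau\cap H^1_{\cO_1}}J\geq 2\inf_{\cM}J$ via the splitting $u=u_++u_-$ over $\Omega_\pm$, with $u_+\in\cM$ and $J(u)=2J(u_+)$) is exactly the paper's argument, and your use of Palais' symmetric criticality at the end also matches. The gap is in the compactness step. You propose to exclude dichotomy so that ``one profile carries all the mass and $u_n$ converges, up to an $x_3$-translation, to a limit $u\in M_s$.'' Under the bare assumptions (g0)--(g3) this is not available and is not what the paper proves. Two separate obstructions: first, the splitting identities of Theorem \ref{ThGerard} (applied to $G_1$ and $G_2$ separately, since $G$ itself has no sign) only give the inequalities \eqref{EqSplit3}--\eqref{EqSplit5}; they do not force the decomposition to consist of a single profile, and there is no monotonicity or Ambrosetti--Rabinowitz structure to run the usual ``each bubble costs at least $c$'' energy count --- indeed a nontrivial profile here need not lie on $\cM$ at all. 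Second, and more fundamentally, even the selected weak limit $\tu$ along the chosen translation does not satisfy $M(\tu)=0$: because $(u_n)$ is a Palais--Smale sequence of the \emph{constrained} functional $J\circ m$ rather than of $J$, the limit only solves $-\theta\Delta u=g(u)$ for some $\theta\neq 0$ with $\theta\,\psi(\tu)=2^*\int_{\R^N}G(\tu)\,dx$ (Lemma \ref{lem:theta}). So ``$u_n\to u\in M_s$'' cannot be the conclusion.

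The paper's resolution, which your sketch is missing, is the $\theta$-analysis: summing \eqref{EqSplit3}--\eqref{EqSplit5} over all profiles gives $\sum_i\theta_i\psi(\tu_i)\geq\sum_i\psi(\tu_i)$, hence \emph{some} profile has $\theta_i\geq 1$ (Remark \ref{Rem:theta}); for that profile the dilation $m_{\cP}(\tu_i)=\tu_i(\theta_i^{1/2}\cdot)$ lands on $\cM\cap X_\tau\cap H^1_{\cO_1}(\R^N)$, is a genuine critical point of $J$, and satisfies $J(m_{\cP}(\tu_i))=\theta_i^{(2-N)/2}\bigl(\tfrac12-\tfrac1{2^*}\bigr)\psi(\tu_i)\leq\beta$. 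The minimizer is thus a \emph{rescaling of one profile} of the minimizing sequence, not its limit; no exclusion of dichotomy is needed. (This is precisely the content of condition $(M)_\beta\,(i)$ and Lemma \ref{lem:Mcond2}.) One further small point: the reduction of translations to $\{0\}\times\{0\}\times\R^{N-2m}$ requires $2\leq m<N/2$; the case $m=N/2$ (relevant for $N=4$) is handled by the compact $\cO_2$-setting of Theorem \ref{ThMain3}, which your parenthetical remark correctly anticipates.
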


\indent If in addition $N\neq 5$, then we may assume that $N-2M\neq 1$ and let us consider $\cO_2:=\cO(M)\times \cO(M)\times\cO(N-2M)\subset \cO(N)$ acting isometrically on $H^1(\R^N)$ with the subspace of invariant function denoted by $H^1_{\cO_2}(\R^N)$.
\begin{Th}\label{ThMain3}
Fix $N\geq 4$, $2\leq M\leq N/2$ such that $N-2M\neq 1$.
Then the following statements hold.\\
(a) There is a solution  $u\in \cM\cap X_\tau\cap H^1_{\cO_2}(\R^N)$ to \eqref{eq} such that 
\begin{equation}\label{eq:thmain3}
J(u)=\inf_{\cM\cap X_\tau\cap H^1_{\cO_2}(\R^N)}J\geq \inf_{\cM\cap X_\tau\cap H^1_{\cO_1}(\R^N)}J.
\end{equation}
(b) 
There is an infinite sequence of distinct solutions $(u_n)\subset \cM\cap X_\tau\cap H^1_{\cO_2}(\R^N)$ to \eqref{eq}.
\end{Th}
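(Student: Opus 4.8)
The plan is to handle part~(a) by direct minimization and part~(b) by an equivariant min--max, the common engine being the following gain in compactness. Since we may choose $m$ with $N-2m\neq 1$, each of the three blocks $\R^m$, $\R^m$, $\R^{N-2m}$ carrying the $\cO_2$--action has dimension $\geq 2$ (the third block being absent when $N-2m=0$). Consequently the orbit of any $y\in\R^N$ with $|y|\to\infty$ under $\cO_2$ has diameter tending to infinity, and the embedding $H^1_{\cO_2}(\R^N)\hookrightarrow L^p(\R^N)$ is \emph{compact} for every $2<p<2^*$. This is precisely the compactness that the weaker group $\cO_1$ fails to provide --- translations along the last factor remain free --- and it is what upgrades the factor--$2$ estimate of Theorem~\ref{ThMain2} to genuine existence and multiplicity. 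The inequality in \eqref{eq:thmain3} is then immediate: from $\cO_1\subset\cO_2$ we get $H^1_{\cO_2}(\R^N)\subset H^1_{\cO_1}(\R^N)$, so minimizing over the smaller set can only increase the infimum.

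For part~(a) I would argue by direct minimization. Let $(u_n)\subset\cM\cap X_\tau\cap H^1_{\cO_2}(\R^N)$ be minimizing. On $\cM$ the energy reduces to $J(u)=\tfrac1N\int_{\R^N}|\nabla u|^2\,dx$, so the gradients are bounded; combined with (g1), which forces $G$ to be negative and quadratic near the origin, this yields an $H^1$--bound. Applying the profile decomposition of Theorem~\ref{ThGerard} inside the symmetric space, the compact embedding rules out vanishing, and the $\cO_2$--symmetry prevents any profile from escaping to infinity, so a subsequence converges strongly in $L^p$ to a nonzero limit $u_0$. A rescaling $u_0(\cdot/t)$ places $u_0$ on $\cM$ without raising the energy, and $X_\tau\cap H^1_{\cO_2}(\R^N)$ is weakly closed, so $u_0$ is a minimizer. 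Finally, because $J$ and $\cM$ are invariant under $\cO_2$ and under $u\mapsto -u(\tau\,\cdot)$ (here $g$ odd makes $G$ even, and $\tau$ normalizes $\cO_2$), the principle of symmetric criticality promotes the constrained minimizer to a genuine critical point of $J$, i.e. a solution of \eqref{eq}.

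For part~(b) I would run an equivariant min--max. Since $g$ is odd, $G$ and hence $J$ are even, while $\cM\cap X_\tau\cap H^1_{\cO_2}(\R^N)$ is symmetric under $u\mapsto -u$; this is the $\Z_2$--structure needed for a Krasnoselskii genus argument on the Pohozaev manifold. Using the scaling $t\mapsto u(\cdot/t)$ to parametrize $\cM$, together with the critical point theory on the (merely topological) manifold $\cM$ developed earlier in the paper, I would set
\begin{equation*}
c_k=\inf_{\substack{A\subset\cM\cap X_\tau\cap H^1_{\cO_2}(\R^N)\\ A\ \text{symmetric},\ \mathrm{genus}(A)\geq k}}\ \sup_{u\in A}J(u),
\end{equation*}
show that each $c_k$ is finite by exhibiting symmetric subsets of arbitrarily large genus, and that $c_k\to\infty$. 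The compactness from the $\cO_2$--embedding yields the Palais--Smale (or Cerami) condition for $J$ on the manifold, so the equivariant deformation lemma makes each $c_k$ a critical value; distinct values produce distinct solutions, and $c_k\to\infty$ forces infinitely many. Symmetric criticality again lifts each critical point to a solution of \eqref{eq}.

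\textbf{Main obstacle.} The delicate point is the compactness on $\cM$ for a \emph{general} Berestycki--Lions nonlinearity. Because $g$ is only continuous and of critical growth, $\cM$ need not be a $C^1$ manifold, so classical constrained critical point theory does not apply --- this is exactly why a critical point theory on a topological manifold is required. Verifying the Palais--Smale condition then rests on the profile decomposition of Theorem~\ref{ThGerard}: one must show that, within the $\cO_2$--symmetric class, no energy leaks to spatial infinity and no dichotomy survives, so that a Palais--Smale sequence converges strongly. Constructing the high--genus symmetric families needed to force $c_k\to\infty$, while staying inside the antisymmetric space $X_\tau$, is the second nontrivial ingredient.
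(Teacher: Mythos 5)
Your overall architecture is close to the paper's: the compact embedding $H^1_{\cO_2}(\R^N)\hookrightarrow L^p(\R^N)$, $2<p<2^*$, is indeed the source of compactness; the inclusion $H^1_{\cO_2}(\R^N)\subset H^1_{\cO_1}(\R^N)$ does give the inequality in \eqref{eq:thmain3}; and part (b) is handled in the paper by exactly the genus-based scheme you describe, with the high-genus symmetric families in $X_\tau$ built from the radial families of \cite{BerLionsII} multiplied by $\vp(|x_1|-|x_2|)$. But there is one genuine gap at the heart of both parts: you never explain how a minimizer (or min--max point) of $J$ on the Pohozaev constraint $\cM$ becomes a solution of \eqref{eq}. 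The principle of symmetric criticality only removes the \emph{symmetry} restriction ($X_\tau\cap H^1_{\cO_2}$ versus $H^1$); it says nothing about removing the \emph{Pohozaev} constraint, and since $\cM$ is not known to be a $\cC^{1,1}$ manifold one cannot simply quote Lagrange multipliers together with a deformation argument. This is precisely what the paper's $\theta$-analysis (Lemma \ref{lem:theta} and Lemma \ref{lem:Mcond3}) is for: a Palais--Smale sequence of $J\circ m$ on the smooth sphere $\cU$ need not map to a Palais--Smale sequence of $J$, so its weak limit $\tu$ only solves the rescaled equation $-\theta\Delta\tu=g(\tu)$ for some a priori unknown $\theta$. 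One must then prove $\theta\neq0$, derive the identity $\theta\psi(\tu)=2^*\int G(\tu)\,dx$ from the Pohozaev identity, and use the $G_1/G_2$ splitting inequalities from Corollary \ref{cor:LionsO2} to force $\theta\geq1$ (and in the $\cO_2$-case $\theta=1$, which is what yields strong convergence and the Palais--Smale condition needed for the genus argument). Your "Main obstacle" paragraph gestures at compactness leaking to infinity, but in the $\cO_2$-setting that is not the issue; the issue is this rescaling ambiguity, and without it neither the existence of a minimizer solving \eqref{eq} nor the verification of the Palais--Smale condition in part (b) goes through.

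Two smaller points. First, in part (b) you assert $c_k\to\infty$ and rely on it to produce infinitely many solutions; this is not established and the paper does not need it --- instead it assumes finitely many critical orbits, shows $\gamma(\cK^{\beta_k})=1$, deduces $\beta_k<\beta_{k+1}$ for all $k$ from the subadditivity of the genus, and reaches a contradiction. Second, in the direct minimization of part (a) you must also check that the weak limit $u_0$ satisfies $\int_{\R^N}G(u_0)\,dx>0$ before you can rescale it onto $\cM$; this follows from $\psi(u_0)\leq 2^*\big(\int G_1(u_0)-\int G_2(u_0)\big)\,dx$ via the compact embedding applied to $G_1$ and Fatou applied to $G_2$, but it is exactly the same splitting computation as the $\theta\geq1$ step, so it cannot be skipped.
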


Note that the associated energy functional $J$ is given by \eqref{eq:action}
is of class $\cC^1$ and has the mountain pass geometry, i.e. $J$ is positive and bounded away from $0$ on a sphere centred at the origin and with a sufficiently small radius $r>0$ and there is $v$ such that $J(v)<0$ and $\|v\|>r$, see  \cite{JeanjeanTanaka} for details. Our problem is modelled in $\R^N$, so that we have to deal with the lack of compactness of Palais-Smale sequences, i.e. sequences $(u_n)$ such that $J(u_n)\to c>0$ and $J'(u_n)\to 0$ as $n\to\infty$. In the classi\-cal approach \cite{BerLions,BerLionsII} the compactness properties can be obtained by considering only radial functions $H^1_{\cO(N)}(\R^N)$ in the spirit of Strauss \cite{Strauss} due to $\cO(N)$-invariance of $J$. In a nonradial case, however, for instance in $H^1(\R^N)$, $X_\tau\cap H^1_{\cO_1}(\R^N)$ or in $X_\tau\cap H^1_{\cO_2}(\R^N)$, the crucial Radial Lemma \cite{BerLions}[Lemma A.II]  is no longer available and an application of the compactness lemma of Strauss \cite{BerLions}[Lemma A.I] is impossible. As usual, one needs to analyse the lack of compactness of Palais-Smale sequences by means of  a concentration-compactness argument of Lions \cite{Lions84}.   
The main difficulty concerning the concentration-compactness analysis is that, in general, $g(s)$ has not subcritical growth of order $s^{p-1}$ for large $s$ with $2<p<2^*$ and $g$ does not satisfy an Ambrosetti-Rabinowitz-type condition \cite{AR}, or any monotonicity assumption, which guarantee the boundedness of Palais-Smale sequences \cite{AR,SzulkinWeth}.  In the present paper we show how to deal with the difficulties with lack of compactness having the general nonlinearity $g$ satisfying (g0)--(g3) and our argument requires a deep analysis of profiles of bounded sequences in $H^1(\R^N)$; see Theorem \ref{ThGerard} below.\\
\indent Beside the lack of compactness difficulties, it is not clear how to treat \eqref{eq} by means of the standard variational methods. Although $J$ has the classical mountain pass geometry \cite{JeanjeanTanaka}, we do not know whether Palais-Smale sequences of $J$ are bounded. To overcome this difficulty in the radial case in \cite{BerLions,BerLionsII}, the authors considered the following constrained problems: the minimization of $u\mapsto \int_{\R^N}|\nabla u|^2\, dx$ on 
$$\Big\{u\in H^1_{\cO(N)}(\R^N): \int_{\R^N}G(u)\, dx=1\Big\}$$ and a critical point theory of the functional $u\mapsto \int_{\R^N} G(u)\,dx$ on 
$$\Big\{u\in H^1_{\cO(N)}(\R^N): \int_{\R^N}|\nabla u|^2\, dx=1\Big\}.$$ Both approaches require the compactness properties and the scaling invariance of the equation \eqref{eq} with application of Lagrange multipliers. Another method in the radial case in \cite{Hirata} is based on the Mountain Pass Theorem for an extended functional in the spirit of Jeanjean \cite{Jeanjean}.  Let us mention that a direct minimization method on the Pohozaev manifold in $H^1_{\cO(N)}(\R^N)$ is due to  Shatah \cite{Shatah}, who studied a nonlinear Klein-Gordon equation with a general nonlinearity. Again, the radial symmetry and the Strauss lemma played an important role in these works.\\
\indent In this paper we provide a new constrained approach which allows to deal with noncompact problems and can be described in an abstract and transparent way for future applications; see Section \ref{sec:criticaltheory} for details. Let us briefly sketch our approach. Recall that if $u\in H^1(\R^N)$ is a critical point of $J$, then
$u\in W^{2,q}_{loc}(\R^N)$ for any $q<\infty$ and $u$ satisfies the {\em Pohozaev identity}, i.e. $M(u)=0$, where
$$M(u):=\int_{\R^N}|\nabla u|^2\,dx-2^*\int_{\R^N}G(u)\, dx.$$
Observe that $M:H^1(\R^N)\to\R$ is of class $\cC^1$ and but, in general, $M'$ is not locally Lipschitz and 
$$\cM=\{u\in H^1(\R^N)\setminus\{0\}: M(u)=0\}$$
need not be of class $\cC^{1,1}$. Hence it seems to be impossible to use any critical point theory based on the deformation lemma involving a Cauchy problem directly on $\cM$, e.g. as in \cite[Section 5]{Willem}. Our crucial observation is that $\cM$ is a topological manifold and there is a homeomorphism $m_\cU:\cU\to\cM$ such that
\begin{equation}\label{PdefofU}
\cU:=\Big\{u\in H^1(\R^N): \int_{\R^N} |\nabla u|^2\, dx=1 \hbox{ and }\int_{\R^N}G(u)\, dx>0\Big\}
\end{equation}
is a manifold of class $\cC^{1,1}$.
Moreover $J\circ m_\cU:\cU\to \R$ is still of class $\cC^1$ and $u\in\cU$ is a critical point of $J\circ m_\cU$ if and only if $m_\cU(u)$ is a critical point of the unconstrained functional $J$. The main difficulty is the fact that it is not clear whether a Palais-Smale sequence $(u_n)\subset \cU$ of $J\circ m_\cU$ can be mapped into a Palais-Smale sequence $m_\cU(u_n)\subset \cM$ of the unconstrained functional $J$. Moreover, we do not know if a nontrivial weak limit point of $(m_\cU(u_n))$ is a critical point of $J$ and stays in $\cM$.\\ 
\indent In order to overcome these obstacles we introduce a new variant of the Palais-Smale condition at level $\beta\in\R$ denoted by $(M)_\beta\; (i)$ (see Section \ref{sec:criticaltheory}), which roughly says that, for
every Palais-Smale sequence $(u_n)\subset \cU$ at level  $\beta$, $(m_\cU(u_n))$
contains a subsequence converging weakly towards a point $u\in H^1(\R^N)$ up to the $\R^N$-translations, which can be projected on a critical point $m_{\cP}(u)\in \cM$. Moreover we may choose a proper sequence of $\R^N$-translations such that
$$J(m_{\cP}(u))\leq \beta=\lim_{n\to\infty} J(m_\cU(u_n)).$$
The selection of the proper translation plays a crucial role and requires the following profile decompositions of bounded sequences in $H^1(\R^N)$ in the spirit of \cite{Gerard,HmidiKeraani,SoliminiDev}.

\begin{Th}\label{ThGerard}
	Suppose that $(u_n)\subset H^{1}(\R^N)$ is bounded.
	Then there are sequences
	$(\tu_i)_{i=0}^\infty\subset H^1(\R^N)$, $(y_n^i)_{i=0}^\infty\subset \R^N$ for any $n\geq 1$, such that $y_n^0=0$,
	$|y_n^i-y_n^j|\rightarrow \infty$ as $n\to\infty$ for $i\neq j$, and passing to a subsequence, the following conditions hold for any $i\geq 0$:
	\begin{eqnarray}%\label{EqSplit1a}
	\nonumber
	&& u_n(\cdot+y_n^i)\weakto \tu_i\; \hbox{ in } H^1(\R^N)\text{ as }n\to\infty,\\
	%&&\tu_i\neq 0\text{ for }  i \geq 1,\\
	\label{EqSplit2a}
	&& \lim_{n\to\infty}\int_{\R^N}|\nabla u_n|^2\, dx=\sum_{j=0}^i \int_{\R^N}|\nabla\tu_j|^2\, dx+\lim_{n\to\infty}\int_{\R^N}|\nabla v_n^i|^2\, dx,
	\end{eqnarray}
	where $v_n^i:=u_n-\sum_{j=0}^i\tu_j(\cdot-y_n^j)$ and
	\begin{eqnarray}
	&& \limsup_{n\to\infty}\int_{\R^N}\Psi(u_n)\, dx= \sum_{j=0}^i
	\int_{\R^N}\Psi(\tu_j)\, dx+\limsup_{n\to\infty}\int_{\R^N}\Psi(v_n^i)\, dx	\label{EqSplit3a}
	\end{eqnarray}
for any function $\Psi:\R\to[0,\infty)$ of class $\cC^1$ such that $\Psi'(s)\leq C(|s|+|s|^{2^*-1})$ for any $s\in\R$ and some constant $C>0$.
	Moreover, if in addition $\Psi$ satisfies
		\begin{equation}
		\label{eq:Psi2}\lim_{s\to 0} \frac{\Psi(s)}{|s|^{2}}=\lim_{|s|\to\infty} \frac{\Psi(s)}{|s|^{2^*}}=0,
		\end{equation}
		then
	\begin{equation}\label{EqSplit4a}
	\lim_{i\to\infty}\Big(\limsup_{n\to\infty}\int_{\R^N}\Psi(v_n^i)\, dx\Big)=0.
	\end{equation}
\end{Th}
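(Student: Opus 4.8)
The plan is to build the profiles $\tu_i$ and the centers $y_n^i$ by an iterative concentration argument, then to obtain the two splitting identities by an orthogonality/Brezis--Lieb telescoping, and finally to control the tail by a Lions-type vanishing estimate. \emph{Construction.} Set $v_n^{-1}:=u_n$. Since $(u_n)$ is bounded, after passing to a subsequence $u_n\weakto\tu_0$ in $H^1(\R^N)$; I put $y_n^0:=0$ and $v_n^0:=u_n-\tu_0$, so that $v_n^0\weakto 0$. Inductively, given $v_n^{i-1}\weakto 0$ bounded, I pass to a subsequence so that $\sigma_i:=\lim_{n\to\infty}\sup_{y\in\R^N}\int_{B(y,1)}|v_n^{i-1}|^2\,dx$ exists. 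If $\sigma_i=0$ I set $\tu_i:=0$, $v_n^i:=v_n^{i-1}$; otherwise I choose $y_n^i$ with $\int_{B(y_n^i,1)}|v_n^{i-1}|^2\,dx\geq\sigma_i/2$ and pass to a further subsequence so that $v_n^{i-1}(\cdot+y_n^i)\weakto\tu_i$. Local Rellich compactness yields $v_n^{i-1}(\cdot+y_n^i)\to\tu_i$ in $L^2(B(0,1))$, whence $\int_{B(0,1)}|\tu_i|^2\,dx\geq\sigma_i/2>0$ and $\tu_i\neq0$; I set $v_n^i:=v_n^{i-1}-\tu_i(\cdot-y_n^i)$, so that $v_n^i(\cdot+y_n^i)\weakto0$. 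A diagonal extraction gives one subsequence good for all $i$. The separation $|y_n^i-y_n^j|\to\infty$ follows by the usual simultaneous induction: one first checks $v_n^{i-1}(\cdot+y_n^j)\weakto0$ for $j\leq i-1$ (translating a removed bubble or a far-apart profile to infinity gives weak $0$), and then, were $y_n^i-y_n^j$ bounded, weak continuity of bounded translations would force $v_n^{i-1}(\cdot+y_n^i)\weakto0$, contradicting $\tu_i\neq0$.

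\emph{Orthogonality and \eqref{EqSplit2a}.} Writing $v_n^{i-1}=\tu_i(\cdot-y_n^i)+v_n^i$ and expanding the $\dot H^1$ norm, the cross term equals $\langle\nabla\tu_i,\nabla v_n^i(\cdot+y_n^i)\rangle_{L^2}$, which tends to $0$ because $v_n^i(\cdot+y_n^i)\weakto0$. Hence $\|\nabla v_n^{i-1}\|_2^2-\|\nabla v_n^i\|_2^2\to\|\nabla\tu_i\|_2^2$, and telescoping this over $j=0,\dots,i$ (with $v_n^{-1}=u_n$) gives \eqref{EqSplit2a} along the subsequence on which $\int_{\R^N}|\nabla u_n|^2\,dx$ converges. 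The identical computation in $L^2$ gives $\|v_n^{i-1}\|_2^2-\|v_n^i\|_2^2\to\|\tu_i\|_2^2$, so that $\sum_{j\geq0}\|\tu_j\|_2^2\leq\lim_n\|u_n\|_2^2<\infty$. In particular $\|\tu_i\|_2^2\to0$, and since $\sigma_i\leq2\|\tu_i\|_2^2$ by construction, the residual concentration satisfies $\sigma_i\to0$.

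\emph{Brezis--Lieb and \eqref{EqSplit3a}.} The growth hypothesis on $\Psi'$ forces $\Psi(0)=0$ (otherwise the integrals diverge) and $0\leq\Psi(s)\leq C(s^2+|s|^{2^*})$, so all integrals are finite and, by the Sobolev embedding $\dot H^1\hookrightarrow L^{2^*}$, uniformly bounded. After a further diagonal extraction I may assume $v_n^{i-1}(\cdot+y_n^i)\to\tu_i$ a.e. Applying the Brezis--Lieb lemma to $f_n=v_n^{i-1}(\cdot+y_n^i)$ and $f=\tu_i$ and using translation invariance of the integral gives the \emph{convergent difference} $\int_{\R^N}\Psi(v_n^{i-1})\,dx-\int_{\R^N}\Psi(v_n^i)\,dx\to\int_{\R^N}\Psi(\tu_i)\,dx$ as $n\to\infty$. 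Telescoping over $j=0,\dots,i$ yields $\int_{\R^N}\Psi(u_n)\,dx-\int_{\R^N}\Psi(v_n^i)\,dx\to\sum_{j=0}^i\int_{\R^N}\Psi(\tu_j)\,dx$; because this is a limit of a \emph{difference}, taking $\limsup_n$ on both sides produces exactly \eqref{EqSplit3a}, with no $\Psi$-dependent choice of subsequence.

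\emph{Tail \eqref{EqSplit4a} and the main difficulty.} Assume now \eqref{eq:Psi2}. For each $\eps>0$ there are $C_\eps>0$ and $p\in(2,2^*)$ with $\Psi(s)\leq\eps(s^2+|s|^{2^*})+C_\eps|s|^p$. From \eqref{EqSplit2a} and its $L^2$ analogue one gets $\limsup_n\|v_n^i\|_{H^1}\leq\lim_n\|u_n\|_{H^1}=:K$ uniformly in $i$, so the first two terms contribute at most $C\eps K^2$. For the remaining term I invoke the quantitative Lions vanishing lemma: $\|v_n^i\|_{L^p}^p\leq C\|v_n^i\|_{H^1}^{p(1-\theta)}\big(\sup_{y}\int_{B(y,1)}|v_n^i|^2\,dx\big)^{\theta}$ for some $\theta\in(0,1)$, whence $\limsup_n\|v_n^i\|_{L^p}^p\leq CK^{p(1-\theta)}\sigma_{i+1}^{\theta}$. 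Since $\sigma_{i+1}\to0$, letting $i\to\infty$ and then $\eps\to0$ gives \eqref{EqSplit4a}. I expect the main obstacle to be precisely this tail step: one must guarantee that the residual local $L^2$-concentration $\sigma_i$ genuinely tends to $0$ (which rests on the $L^2$-orthogonality of the bubbles and the resulting summability $\sum_j\|\tu_j\|_2^2<\infty$) and then convert this into vanishing of the intermediate integrals through the quantitative Lions lemma, all along a \emph{single} subsequence that is simultaneously compatible with the critical-exponent Brezis--Lieb identity of the previous step.
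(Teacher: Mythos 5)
Your proof is correct, and while its skeleton (iterative extraction of concentration profiles, an iterated Brezis--Lieb identity for \eqref{EqSplit3a}, and a Lions-type vanishing argument for the tail) matches the paper's, the two delicate steps are handled by genuinely different means. For \eqref{EqSplit3a} the paper writes $\Psi(u_n)-\Psi(v_n^0)=\int_0^1\Psi'(u_n-s\tu_0)\tu_0\,ds$ and passes to the limit by Vitali's theorem; your appeal to the general Brezis--Lieb lemma is the same estimate packaged as a citation, so the difference there is cosmetic. The real divergence is in \eqref{EqSplit4a}. The paper chooses, together with $y_n^i$, radii $r_i\ge\max\{i,r_{i-1}\}$ so that a single ball of radius $r_i$ nearly realizes the supremum of the local mass over all radii, derives the comparison $c_{k+1}\le 8c_{i+1}$, and bounds $\frac1k\sum_{i<k}c_{i+1}$ by a disjoint-ball counting argument against $\|u_n\|_{L^2}^2$; the quantitative separation $|y_n^i-y_n^j|\ge n-r_i-r_j$ is a by-product of this bookkeeping. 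You instead fix the radius $1$, note that the extracted profile satisfies $\|\tu_i\|_{L^2}^2\ge\sigma_i/2$ by construction, and obtain $\sigma_i\to0$ for free from the $L^2$-Pythagoras expansion $\sum_j\|\tu_j\|_{L^2}^2\le\lim_n\|u_n\|_{L^2}^2$; the quantitative Lions inequality then yields \eqref{EqSplit4a} after the $\varepsilon$-splitting of $\Psi$ at the two endpoints. This is shorter and closer to the G\'erard/Hmidi--Keraani formulation, at the price of a purely qualitative separation statement $|y_n^i-y_n^j|\to\infty$ -- which is all the theorem asserts anyway. Two minor points to tidy up: when $\sigma_i=0$ you set $\tu_i=0$ but must still exhibit some $y_n^i$ (e.g.\ translates escaping to infinity away from the finitely many genuine centers) so that the pairwise divergence holds for all $i\ne j$ as stated -- the paper is equally silent here; and the exponents in your quantitative Lions inequality should be established for one exponent $p=2+4/N$ and then interpolated against the bounded $L^2$ and $L^{2^*}$ norms, though this does not affect the argument.
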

In particular, taking $\Psi(s)=|s|^p$ with $p=2$ and with $2<p<2^*$ we obtain \cite{HmidiKeraani}[Proposition 2.1]. 
Our argument relies only on new variants of Lions lemma \cite{Willem}[Lemma 1.21]; see Section \ref{sec:Lions} with Lemma \ref{lem:Lions} and variants of Theorem \ref{ThGerard} in $H^1_{\cO_1}(\R^N)$ as well as in $H^1_{\cO_2}(\R^N)$.\\
\indent Having a minimizing sequence of $J\circ m_\cU$, we find a proper translation such that a weak limit point can be projected on a critical point of $J$ in $\cM$ and we prove Theorem \ref{ThMain1}. The same procedure works in the subspace $X_\tau\cap H^1_{\cO_1}(\R^N)\subset H^1(\R^N)$, however we have to ensure that we choose a proper translation along $\R^{N-2M}$-variable and we get Theorem \ref{ThMain2}.\\
\indent In order to get multiplicity of critical points, we show that
$J\circ m_\cU$ satisfies the Palais-Smale condition in  $\cU\cap X_\tau\cap H^1_{\cO_2}(\R^N)$ and in view of the critical point Theorem \ref{Th:CrticMulti} of Section \ref{sec:criticaltheory}, $J$ has infinitely many critical points and we prove Theorem \ref{ThMain3}.\\
\indent Note that the existence and the multiplicity results concerning similar problems to \eqref{eq} in the noncompact case present in the literature require strong growth conditions imposed on the nonlinear term, e.g. $g$ has to be of subcritical growth, i.e. $|g(s)|\leq c(|s|+|s|^{p-1})$ for some constant $c>0$ and $2<p<2^*$, and, in addition, must satisfy an Ambrosetti-Rabinowitz-type condition \cite{AR,CotiZelatiRab},  or a monotonicity-type assumption \cite{SzulkinWeth}; see also  references therein.
If a nonlinear equation like \eqref{eq} exhibits radial symmetry, then the problem of existence of nonradial solutions is particularly challenging and there are only few results in this direction. The first paper \cite{BartschWillem} due to Bartsch and Willem dealt with semili\-near elliptic problems in dimension $N=4$ and $N\geq 6$ under subcritical growth conditions and an Ambrosetti-Rabinowitz-type condition. In fact, from \cite{BartschWillem} we borrowed an idea of the decomposition of $\R^N$ into $\R^M\times\R^M\times\R^{N-2M}$ and  $\cO_2$-action on $H^1(\R^N)$ given in Theorem \ref{ThMain2}. Further analysis of decompositions of $\R^N$ in this spirit has been recently studied in \cite{Marzantowicz} and in the references therein.  Next, Lorca and Ubilla \cite{Lorca} solved the similar problem in dimension $N=5$ by considering  $\cO_1$-action on $H^1(\R^N)$, and recently Musso, Pacard and Wei \cite{Musso} obtained nonradial solutions in any dimension $N\geq 2$; see also \cite{AoWei}. 
In these works, again, strong assumptions needed to be imposed on nonlinear terms, for instance a nondegeneracy condition in \cite{Musso,AoWei}, which  allows to apply a Liapunov-Schmidt-type reduction argument.\\
\indent The paper is organized as follows. In Section 2 we build a critical point theory on a general topological manifold $\cM$ in the setting of abstract assumptions (A1)--(A3). Having our variants of Palais-Smale condition $(M)_\beta\;(i)-(ii)$, in Theorem \ref{Th:CrticMulti} we prove the existence of minimizers on $\cM$ and the multiplicity result. The general theorem can be useful in the study of strongly indefinite problems as well, like \cite{SzulkinWeth,MederskiENZ}, where the classical linking approach due to Benci and Rabinowitz \cite{BenciRabinowitz} does not apply and the classical Palais-Smale condition is not satisfied; see Remark \ref{rem:CrtiticalPointTheory}. Moreover, in a subsequent work \cite{MederskiZeroMass} 
these techniques will be used to obtain nonradial solutions in the zero mass case problem \eqref{eq}, which has been studied in the radial case so far in \cite{BerLions,BerLionsInfZero}. In Section \ref{sec:Lions} we prove three variants of Lions lemma in $H^1(\R^N)$, $H^1_{\cO_1}(\R^N)$ and in $H^1_{\cO_2}(\R^N)$. These allow us to prove the profile decomposition Theorem \ref{ThGerard} and its variant Corollary \ref{CorGerard} in order to analyse Palais-Smale sequences in Proposition \ref{prop:PSanaysis} and in Corollary \ref{cor:PSanalysis}. We complete proofs of Theorems \ref{ThMain1}, \ref{ThMain2} and \ref{ThMain3} in the last Section \ref{sec:proof}.\\
\indent Finally we would like to point out that similarly as in \cite{BerLions} one can assume (g0), (g1), (g3) and that $-\infty\leq \limsup_{s\to\infty} g(s)/s^{2^*-1}\leq 0$ instead of (g2). Then, in view of Theorem \ref{ThMain2} and Theorem \ref{ThMain3} we obtain nonradial solutions as well. Indeed,
we modify $g$ in the following way. If $g(s)\geq 0$ for all $s\geq \xi_0$, then $\tilde{g}=g$. Otherwise we set
$\xi_1:=\inf\{\xi\geq \xi_0: g(\xi)=0\}$, 
\begin{equation*}
\tilde{g}(s)=\left\{
\begin{array}{ll}
g(s)&\quad\hbox{if }0\leq s \leq \xi_1,\\
0 &\quad\hbox{if }s>\xi_1,
\end{array}
\right.
\end{equation*}
and $\tilde{g}(s)=-\tilde{g}(-s)$ for $s<0$. Hence $\tilde{g}$ satisfies (g0)--(g3), and by the strong maximum principle if $u\in H^1(\R^N)$ solves $-\Delta u=\tilde{g}(u)$, then $|u(x)|\leq \xi_1$ and $u$ is a solution to \eqref{eq}.

\section{Critical point theory on a topological manifold}\label{sec:criticaltheory}

In this section we introduce a critical point theory on a topological submanifold $\cM$ of a Banach space endowed with a certain isometric group action. As we shall see we introduce variants of the Palais-Smale condition $(M)_\beta\;(i)$ and $(M)_\beta\;(ii)$ which allow to treat variational problems with the lack of compactness. The results of this section will be applied to \eqref{eq} in Section \ref{sec:proof}.\\
\indent
Let $G$ be an isometric group action on a reflexive Banach space $X$ with norm $\|\cdot\|$ and $J:X\to \R$ is a $\cC^1$-functional. Assume that
\begin{itemize}
	\item[(A1)] $J$ is $G$-invariant, i.e. if $u\in X$ and $g\in G$ then $J(gu)=J(u)$. If $gu=-u$, then $u=0$. Moreover if $g_n\in G$, $u\in X$ and $g_nu\weakto  v$, then $v=gu$ for some $g\in G$ or $v=0$. 
\end{itemize}
Let $\cM\subset X\setminus\{0\}$ be a closed and nonempty subset of $X$ such that
\begin{itemize}
	\item[(A2)] $\cM$ is $G$-invariant and $\inf_{\cM} J>0=J(0)$.
\end{itemize}
Since, in general, $\cM$ has not the $\cC^{1,1}$-structure, it is difficult to build a critical point theory directly on $\cM$ and deal with deformation techniques involving a Cauchy problem, e.g. as in \cite[Section 5]{Willem}. Therefore we
 introduce a manifold
$$\cS=\{u\in Y: \psi(u)=1\}$$%\subset X\setminus\{0\}$$
in a closed $G$-invariant subspace $Y\subset X$, where $\psi\in\cC^{1,1}(Y,\R)$ is $G$-invariant and such that $\psi'(u)\neq 0$ for $u\in\cS$. Clearly, from the implicit function theorem,  $\cS$ is a $G$-invariant manifold of class $\cC^{1,1}$ and of codimension $1$ in $Y$ with the following tangent space at $u\in\cS$
$$T_u \cS=\{v\in Y: \psi'(u)(v)=0\}.$$
Moreover we assume:
\begin{itemize}
	\item[(A3)] There are a $G$-invariant open neighbourhood $\cP\subset X\setminus\{0\}$ of $\cM$ and $G$-equivariant map $m_{\cP}:\cP\to \cM$ such that $m_{\cP}(u)=u$ for $u\in\cM$ and the restriction $m_\cU:=m_{\cP}|_{\cU}:\cU\to\cM$ for $\cU:=\cS\cap\cP$  is a homeomorphism. Moreover
	$J\circ m_\cU=J|_{\cM}\circ m_\cU$ is of class $\cC^1$ and $(J\circ m_\cU)(u_n)\to\infty$ as $u_n\to u\in\partial\cU$, $u_n\in\cU$, where the boundary of $\cU$ is taken in $\cS$.
The above sets are described in the following diagram:
\[
\begin{tikzcd}
X\rar[phantom,"\supset" ] \rar[d,phantom,"\cup" ] &X\setminus\{0\} \rar[phantom,"\supset" ] &\cP\arrow[r, "m_\cP"] \rar[d,phantom,"\cup" ]  & \cM   \\
Y \rar[phantom,"\supset" ] & \cS \rar[phantom,"\supset"] & \cU=\cS\cap\cP\arrow[ur, "m_\cU", swap] \arrow[ur, "\approx"]& 
\end{tikzcd}
\]
\end{itemize}

\indent As usual, we say that $(u_n)\subset \cU$ is a {\em $(PS)_\beta$-sequence} of $J\circ m_\cU:\cU\to\R$ 
provided that
$$(J\circ m_\cU)(u_n)\to \beta\hbox{ and }(J\circ m_\cU)'(u_n) \to 0.$$
Let $\cK$ be the set of all critical points of $J\circ m_\cU$, i.e.
$$\cK:=\big\{u\in\cU: (J\circ m_\cU)'(u)(v)=0\hbox{ for any }v\in T_u\cS\big\}.$$
 For $u\in X$, $G\ast u$ denotes the orbit of $u$
$$G\ast u:=\{gu: g\in G\}.$$ 
$G\ast u$ is called a {\em critical orbit} if $u\in\cK$. Clearly $G\ast u\subset \cK$ if $u\in\cK$.
We introduce the following variants of the {\em Palais-Smale condition} at level $\beta\in\R$.

\begin{itemize}
	\item[$(M)_\beta\;(i)$]  For every $(PS)_{\beta}$-sequence $(u_n)\subset \cU$ of $J\circ m_\cU$, there are a sequence $(g_n)\subset G$ and $u\in\cP$ such that
	$g_nm_\cU(u_n) \weakto u$ along a subsequence, $J'(m_{\cP}(u))=0$ and $J(m_{\cP}(u))\leq \beta$.
	\item[$\hspace{1cm}(ii)$]
	If the number of distinct critical orbits is finite,  then there is $m_\beta>0$ such that for every $(u_n)\subset \cU$ such that $(J\circ m_\cU)'(u_n)\to 0$ as $n\to\infty$, $(J\circ m_\cU)(u_n)\leq\beta$ and $\|u_n-u_{n+1}\|<m_\beta$ for $n\geq 1$, there holds
	$\liminf_{n\to\infty}\|u_n-u_{n+1}\|=0$.
\end{itemize}

Note that  $(M)_\beta(i)$ implies that if $(J\circ m_\cU)'(u)=0$, then $J'(m_\cU(u))=0$ for $u\in\cU$.
Indeed, taking a sequence $u_n=u$, observe that $g_n m_\cU(u)\weakto \tu$ along a subsequence, $\tu\in \cP\subset X\setminus\{0\}$ and by (A1), $\tu=gm_\cU(u)$ for some $g\in G$. Then $\tu\in \cM$ and by (A3), $m_{\cP}(\tu)=m_{\cP}(gm_\cU(u))=gm_\cU(u)$ is a critical point of $J\circ m_\cU$, hence by (A1), we conclude $J'(m_\cU(u))=0$.
Therefore critical points of $J\circ m_\cU$ are mapped by $m_\cU$ into nontrivial critical points of the unconstrained functional $J$. Observe that, however, $m_\cU(u_n)$ need not to be a Palais-Smale sequence of the unconstrained functional $J$ if $(u_n)\subset \cU$ is a $(PS)_\beta$-sequence of $J\circ m_\cU$.\\
\indent In what follows, for $A\subset X$ and $r>0$, $B(A,r):=\{u\in X: \|u-v\|<r\hbox{ for some }v\in A\}$. Moreover,  let  $\Phi:=J\circ m_\cU:\cU\to\R$ and
for any $\alpha<\beta$ let us denote 
\begin{eqnarray*}
	\Phi^{\beta}_\alpha&:=&\{u\in\cU: \alpha\leq \Phi(u)\leq \beta\},\\
	\Phi^{\beta}&:=&\{u\in\cU: \Phi(u)\leq \beta\}.
\end{eqnarray*}	

\begin{Lem}\label{lem:M_betacond}
	Suppose that (A1)--(A3), $(M)_\beta\; (ii)$ hold for some $\beta\in\R$ the number of distinct critical orbits is finite. If $(u_n)\subset\cU$ is a $(PS)_\alpha$-sequence for some $\alpha<\beta$ and
	\begin{equation}\label{eq:closetoK}
	u_n\in B(\cK\cap \Phi^\beta,m_\beta),
	\end{equation}
	then passing to a subsequence $g_nu_n\to u$ for some $u\in \cK$ and $g_n\in G$.
\end{Lem}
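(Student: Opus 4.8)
The plan is to use the finiteness of the orbit set to pin the sequence near a single critical point, and then to feed a suitably \emph{interlaced} sequence into $(M)_\beta\;(ii)$ so as to upgrade its $\liminf$ conclusion into genuine strong subconvergence.

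First I would record that $J\circ m$ is $G$-invariant: since $m=m_\cP|_{\cU}$ is $G$-equivariant by (A3) and $J$ is $G$-invariant by (A1), one has $(J\circ m)(gu)=J(g\,m(u))=J(m(u))=(J\circ m)(u)$ for $g\in G$ and $u\in\cU$. Because $G$ acts by isometries and $\cS$ is $G$-invariant, the restricted differential satisfies $\|(J\circ m)'(gu)\|=\|(J\circ m)'(u)\|$. Hence, if $(u_n)$ is a $(PS)_\alpha$-sequence and $g_n\in G$, then $(g_nu_n)\subset\cU$ is again a $(PS)_\alpha$-sequence. Next, writing $\cK_\beta:=\cK\cap(J\circ m)^{-1}((-\infty,\beta])$, I would use \eqref{eq:closetoK} to pick for each $n$ a point $w_n\in\cK_\beta$ with $\|u_n-w_n\|<m_\beta$. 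Since $\cK$ has finitely many orbits, say $\cK=\bigcup_{l=1}^{k}G\ast v_l$, the pigeonhole principle lets me pass to a subsequence along which all $w_n$ lie in one orbit $G\ast v$; note $v\in\cK_\beta$ because $(J\circ m)(v)=(J\circ m)(w_n)\le\beta$ by invariance. Writing $w_n=h_nv$ and setting $g_n:=h_n^{-1}$, the isometry of the action yields $p_n:=g_nu_n$ with $\|p_n-v\|<m_\beta$, and $(p_n)$ is still a $(PS)_\alpha$-sequence by the first paragraph.

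The key step would be the interlacing. After discarding finitely many terms so that $(J\circ m)(p_n)\le\beta$ (possible since $(J\circ m)(p_n)\to\alpha<\beta$), I form the sequence $v,p_1,v,p_2,v,p_3,\dots$ in $\cU$. Each consecutive pair is of the form $(v,p_n)$ or $(p_n,v)$, so its distance equals $\|p_n-v\|<m_\beta$; every value is $\le\beta$; and the differentials tend to $0$, the $v$-terms vanishing identically while $(J\circ m)'(p_n)\to0$. Thus $(M)_\beta\;(ii)$ applies to this interlaced sequence and forces the $\liminf$ of its consecutive distances to be $0$. Since each such distance is some $\|p_n-v\|$, this gives $\liminf_{n\to\infty}\|p_n-v\|=0$, i.e.\ a subsequence with $g_nu_n=p_n\to v\in\cK$, which is exactly the assertion with $u=v$.

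The main obstacle I expect is not any single estimate but the precise matching of the hypotheses of $(M)_\beta\;(ii)$: that condition demands consecutive distances strictly below $m_\beta$, yet the two trapped terms $p_n,p_{n+1}$ only satisfy $\|p_n-p_{n+1}\|<2m_\beta$ a priori. Interlacing with the \emph{fixed} critical point $v$ is precisely the device that restores the bound $<m_\beta$ while simultaneously turning the conclusion of $(M)_\beta\;(ii)$ into strong convergence toward $v$. A secondary point requiring care is the invariance of the $(PS)_\alpha$ property under the group action, which rests entirely on the isometric $G$-equivariance encoded in (A1) and (A3).
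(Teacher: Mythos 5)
Your proof is correct and takes essentially the same route as the paper: the paper likewise interlaces the $(PS)_\alpha$-sequence with nearby critical points (setting $w_{2n-1}:=u_n$ and choosing $w_{2n}\in\cK$ with $\|w_{2n}-u_n\|<m_\beta$), uses the finiteness of orbits to translate all critical terms onto one fixed representative, and then feeds the resulting sequence into $(M)_\beta\,(ii)$. Your only (immaterial) variation is to normalize by the group action \emph{before} interlacing, so that the interlaced sequence alternates with the constant $v$, rather than applying the group elements afterwards.
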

\begin{proof}
	Let $(u_n)\subset\cU$ be a $(PS)_\alpha$-sequence such that \eqref{eq:closetoK} holds. Passing to a subsequence $\Phi(u_n)\leq \beta$. Then 
	we put $w_{2n-1}:=u_n$ and take any $w_{2n}\in \cK$ such that 
	$$\|w_{2n}-u_n\|<m_\beta$$
	and  $\Phi(w_{2n})\leq \beta$ 
	for any $n\geq 1$. Take $\tilde{\cK}\subset \cK$ such that each orbit $G\ast u$ has a unique representative in $\tilde{\cK}$ for $u\in \cK$, so that $\tilde{\cK}\cap (G\ast u)$ is a singleton. Since $\tilde{\cK}$ is finite, 
	passing to a subsequence we may assume that $g_{2n}w_{2n}=u\in \tilde{\cK}$ for some $g_{2n}\in G$ and $u\in \tilde{\cK}$. Take $g_{2n-1}=g_{2n}$ for $n\geq 1$ and observe that by (A1), $\Phi'$ is $G$-equivariant, hence $\Phi'(g_nw_n)\to 0$, $\Phi(g_nw_n)\leq \beta$ and
	$$\|g_nw_n-g_{n+1}w_{n+1}\|<m_{\beta}$$
	for $n\geq 1$.
	Then, in view of $(M)_{\beta}\; (ii)$ we obtain
	$$\liminf_{n\to\infty}\|g_{2n}u_n-u\|=\liminf_{n\to\infty}\|g_{2n-1}w_{2n-1}-g_{2n}w_{2n}\|=0,$$
	and $g_{2n}u_n\to u\in \cK$ passing to a subsequence.
\end{proof}
Hence, roughly speaking, Lemma \ref{lem:M_betacond} says that if $(M)_\beta\; (ii)$ holds, then provided that $\cK$ has a finite number of distinct orbits, a Palais-Smale sequence of $\Phi=J\circ m_\cU$ which is close enough to $\cK$ will contain  a convergent subsequence up to the $G$-action.\\
\indent In order to deal with the multiplicity of critical points by means of the Krasnoselskii genus \cite{Struwe}, we consider the following assumption:
\begin{itemize}
	\item[(S)] $J$ is even, $m_{\cP}$ is odd, $\cU$, $\cM$ are symmetric, i.e. $\cU=-\cU$, $\cM=-\cM$ and
 for any $k\geq 1$, there exists a continuous and odd map $\tau:S^{k-1}\to\cP$,
where $S^{k-1}$ is the unit sphere in $\R^k$.
\end{itemize}
As we shall see later, the latter condition in (S) guarantees that the Krasnoselskii genus of $\Phi^\beta$ is sufficiently large for large $\beta$.\\
\indent Now our main result of this section reads as follows.

\begin{Th}\label{Th:CrticMulti}
	Suppose that $J:X\to\R$ is of class $\cC^1$ and satisfies (A1)--(A3).\\
	(a) If  $(M)_{\beta}\; (i)$ holds for $\beta=\inf_{\cM} J$, then
	$J$ has a critical point $u\in\cM$ such that $$J(u)=\inf_{\cM}J.$$
	(b) Assume that $(M)_{\beta}\; (i)-(ii)$ hold for every $\beta\geq \inf_{\cM} J$ and (S) is satisfied.
	Then $J$ has infinitely many $G$-distinct critical points in $\cM$, i.e. there is a sequence of critical points $(u_n)\subset\cM$ such that $(G\ast u_n)\cap (G\ast u_m)=\emptyset$ for $n\neq m$.\\
	(c) Assume that $G=\{\id\}$, (S) is satisfied and for every $(PS)_{\beta}$-sequence $(u_n)\subset \cU$ of $J\circ m_\cU$ with $\beta\geq \inf_{\cM} J$, there is $u\in\cU$ such that $J'(m_\cU(u))=0$ and $u_n \to u$ along a subsequence. 
	Then $J$ has infinitely many critical points in $\cM$.
\end{Th}

\begin{proof}
Proof of (a).	 Similarly as in \cite[Lemma 5.14]{Willem} we find
	an odd and  locally Lipschitz pseudo-gradient vector field $v:\cU\setminus \cK\to Y$ such that  $v(u)\in T_u\cS$ and
	\begin{eqnarray}
	%\nonumber
	%v(u)&\in& X_L,\\
	\|v(u)\|&<&2\| \Phi'(u)\|,\label{eq:flow2}\\
	\Phi'(u)(v(u)) &>& \|\Phi'(u)\|^2\label{eq:flow3}
	\end{eqnarray}
	for any $u\in \cU\setminus \cK$. The obtained pseudo-gradient vector field allows to prove a variant of deformation lemma \cite[Lemma 5.15]{Willem} in 
$\cU$ and arguing as in  \cite[Theorem 8.5]{Willem}, we find a minimizing sequence $(u_n)\subset \cU$ such that 
	$$\Phi(u_n)\to c:=\inf_{\cU}J\circ m_\cU=\inf_{\cM}J$$
	and $\Phi'(u_n)\to 0$ as $n\to\infty$. In view of $(M)_{\beta}\;(i)$ we find a nontrivial critical point $m_{\cP}(u)\in \cM$ of $J$ such that passing to a subsequence $g_nu_n\weakto u$ for some $g_n\in G$. Since 
	$$c\geq J(m_{\cP}(u))\geq\inf_{\cM}J,$$
	we get $J(m_{\cP}(u))=c$, which completes proof of (a). \\
\indent 	Proof of (b) and (c) is based on the fact that the Lusternik-Schnirelman values
		\begin{equation}\label{eq:LSvalue}
			%\Sigma_k&=&\{A\in\Sigma:  \gamma(A)\geq k\}\\
			\beta_k:= \inf\{\beta\in\R:  \gamma(\Phi^{\beta})\geq k\}.
		\end{equation}
	are increasing critical values of $\Phi$ for $k\geq 1$, where $\gamma$ stands for the Krasnoselskii genus for closed and symmetric subsets of $X$, see \cite[Chapter II.5]{Struwe} for the definition and properties of the genus. Observe that (S) implies that for any $k\geq 1$ there is an odd map $\tau:S^{k-1}\to\cP$, hence 
by \cite[Proposition II.5.4 and Proposition II.5.2]{Struwe} we get
$\gamma\Big(m^{-1}\big(m_{\cP}(\tau(S^{k-1}))\big)\Big)\geq 
\gamma\big(\tau(S^{k-1})\big)\geq \gamma\big(S^{k-1}\big)=k$. Therefore
we find $\beta>0$ such that 
	$$\gamma(\Phi^{\beta})\geq \gamma\Big(m^{-1}\big(m_{\cP}(\tau(S^{k-1}))\big)\Big)\geq k,$$
	hence $\beta_k<\infty$.
	Now, if the classical Palais-Smale condition is satisfied as in (c), then one can argue as in  \cite[Theorem 8.10]{Rabinowitz:1986} and show that $(\beta_k)$ is a sequence of increasing critical values and we conclude (c).\\
\indent Proof of (b).	
Observe that we find the unique flow 
	$\eta:\cG\to  \cU\setminus \cK$ for the pseudo-gradient vector field obtained in (a) such that
	\begin{equation*}
	\left\{
	\begin{aligned}
	&\partial_t \eta(t,u)=-v(\eta(t,u))\\
	&\eta(0,u)=u
	\end{aligned}
	\right.
	\end{equation*}
	where $\cG:=\{(t,u)\in [0,\infty)\times (\cU\setminus \cK):\; t<T(u)\}$ and $T(u)$ is the maximal time of the existence of $\eta(\cdot,u)$. 
Suppose that the number of distinct critical orbits is finite. We will show that in fact $(\beta_k)$ is a sequence of increasing critical values and this contradiction will complete the proof.
	Take $\beta\geq c$ and let
	\begin{eqnarray*}
		\cK^\beta&:=&\{u\in \cK:  \Phi(u)=\beta\}.
	\end{eqnarray*}
{\em Claim 1.}	There is $\eps_0>0$ such that 
	\begin{equation}\label{eq:KL}
	\cK\cap \Phi_{\beta-\eps_0}^{\beta+\eps_0}=\cK^\beta.
	\end{equation}
	Indeed, observe that each orbit $G\ast u$ for $u\in\cK$ corresponds to a single value of $\Phi$ and since the number of distinct critical orbits is finite,
	$\cK^\beta$ is empty except for finitely many values $\beta$.\\
	{\em Claim 2.} For every $\delta\in (0,m_{\beta+\eps_0})$ there is $\eps\in (0,\eps_0]$ such that
	\begin{equation}\label{eq:entrancetime1}
	\lim_{t\to T(u)} \Phi(\eta(t,u)) < \beta -\eps \quad\hbox{for } u\in \Phi^{\beta+\eps}_{\beta-\eps_0}\setminus B(\cK^\beta,\delta),
	\end{equation}
	which means that if $u$ is away from $\cK^\beta$ at a level not greater than $\beta+\eps$, then the the flow $\eta$ brings $u$ below the level $\beta$. This will be used to define the entrance time map below; see {\em Conclusion}.
	Take $u\in \Phi^{\beta+\eps}_{\beta-\eps_0}\setminus \cK^\beta$ and
	observe that by (A2),
	$\Phi(\eta(t,u))=J(m_\cU(\eta(t,u)))$  is bounded from below by $c$, and by  \eqref{eq:flow3} it is decreasing in $t\in [0,T(u))$. Hence $\lim_{t\to T(u)} \Phi(\eta(t,u))$ exists.
	Suppose that 
	\eqref{eq:entrancetime1} does not hold, i.e. there is $\delta\in (0,m_{\beta+\eps_0})$ such that for any $\eps\in (0,\eps_0]$ 
	$$A_\eps:=\big\{u\in \Phi^{\beta+\eps}_{\beta-\eps_0}\setminus B(\cK^\beta,\delta):  
	\lim_{t\to T(u)} \Phi(\eta(t,u)) \geq \beta -\eps
	\big\}\neq \emptyset.$$
	We will now show that for any $u\in A_{\eps_0}$
	\begin{equation}\label{eq:Claim1}
	\lim_{t\to T(u)} \inf_{v\in \cK^\beta}\|\eta(t,u)-v\|=\lim_{t\to T(u)} \mathrm{dist} \big(\eta(t,u),\cK^\beta\big) =0.
	\end{equation}
Let us start to show  that for $u\in A_{\eps_0}$, $\lim_{t\to T(u)}\eta(t,u)$ exists.
	Suppose that, on the contrary, there is $0<\eta_0<m_{\beta+\eps_0}$ and there is an increasing sequence $(t_n)\subset [0,T(u))$ such that $t_n\to T(u)$ and 
	\begin{eqnarray}\label{eq:eps_0}
	\eta_0&<&\|\eta(t_{n+1},u)-\eta(t_n,u)\|<m_{\beta+\eps_0}
	\end{eqnarray} 
	for $n\geq 1$.
	Note that by \eqref{eq:flow2} and \eqref{eq:flow3}
	\begin{eqnarray*}
		\eta_0<\|\eta (t_{n+1},u)-\eta(t_n,u)\|&\leq& \int_{t_n}^{t_{n+1}}\|v(\eta(s,u))\|\, ds\leq 2 \int_{t_n}^{t_{n+1}}\|\Phi'(\eta(s,u))\|\, ds\\
		&\leq& 2\int_{t_n}^{t_{n+1}}\big(\Phi'(\eta(s,u))(v(\eta(s,u)))\big)^{1/2}\, ds\\
		&\leq& 2\sqrt{t_{n+1}-t_n} \Big(\int_{t_n}^{t_{n+1}} \Phi'(\eta(s,u))(v(\eta(s,u)))\,ds\Big)^{1/2}\\
		&=& 2\sqrt{t_{n+1}-t_n} \big(\Phi(\eta(t_{n},u))-\Phi(\eta(t_{n+1},u))\big)^{1/2}\\
		&\leq & 2\sqrt{t_{n+1}-t_n} (\beta +\eps)^{1/2}.
	\end{eqnarray*}
	Hence $|t_{n+1}-t_n|\geq \frac{\eta_0^2}{4(\beta+\eps)}$ and $T(u)=\infty$.
	Again, by \eqref{eq:flow3}
	\begin{equation}\label{eq:PSetha}
	\int_{t_n}^{t_{n+1}}\|\Phi'(\eta(s,u))\|^2\, ds\leq \big(\Phi(\eta(t_n,u))-\Phi(\eta(t_{n+1},u))\big)\to 0
	\end{equation}
	as $n\to\infty$. Then, for every $n$,  we find $t_n'\in[t_n,t_{n+1}]$ such that $\Phi'(\eta(t_n',u))\to 0$ and by  \eqref{eq:eps_0} we may assume that $\eta_0<\|\eta(t_{n}',u)-\eta(t_{n+1}',u)\|<m_{\beta+\eps_0}$ for $n\geq 1$.
	 Therefore we get a contradiction with $(M)_{\beta+\eps_0}(ii)$. Hence $u_0=\lim_{t\to T(u)}\eta(t,u)$ exists and since $J(\eta(t,u))\leq J(u)$ is bounded as $t\to T(u)$, by (A3) we get $u_0\notin \partial\cU$. From
	the definition of $T(u)$, we infer that $u_0\in \cK$.  
Moreover, by \eqref{eq:KL}
	$$u_0\in \cK\cap \Phi^{\beta+\eps_0}_{\beta-\eps_0}=\cK^\beta,$$
which completes the proof of \eqref{eq:Claim1}.\\
Now observe that in view of \eqref{eq:Claim1}, for $u\in A_{\eps_0}$ we may define
	\begin{eqnarray*}
		t_0(u)&:=&\inf\big\{t\in [0,T(u)):  \eta(s,u)\in B(\cK^\beta,m_{\beta+\eps_0})\hbox{ for all }s> t\big\}\\
		t(u)&:=&\inf\big\{t\in [t_0(u),T(u)):  \eta(t,u)\in B(\cK^\beta,\delta/2)\big\}
	\end{eqnarray*}
	Note that $0\leq t_0(u)< t(u)<T(u)$ and we show that
	\begin{equation}\label{eq:inft(u)}
	\inf_{u\in A_{\eps_0}}
	t(u)-t_0(u)\geq \frac{\delta^2}{16(\beta+\eps_0)}.
	\end{equation}
	Indeed, if $u\in A_{\eps_0}$, then by \eqref{eq:flow2} and \eqref{eq:flow3} we have
	\begin{eqnarray*}
		\frac{\delta}{2}&\leq&
		\|\eta(t_0(u),u)-\eta(t(u),u)\| \leq \int_{t_0(u)}^{t(u)}\|v(\eta(s,u))\|\, ds
		\leq 2\int_{t_0(u)}^{t(u)}\|\Phi'(\eta(s,u))\|\, ds\\
		&\leq& 2\int_{t_0(u)}^{t(u)}\big(\Phi'(\eta(s,u))(v(\eta(s,u_n)))\big)^{1/2}\, ds\\
		&\leq& 
		2\sqrt{t(u)-t_0(u)}\Big(\int_{t_0(u)}^{t(u)}\Phi'(\eta(s,u))(v(\eta(s,u)))\, ds\Big)^{1/2}\\
		&=& 
		2\sqrt{t(u)-t_0(u)}\big(\Phi(\eta(t_0(u)u)-\Phi(\eta(t(u),u))\big)^{1/2}\\
		&\leq& 2\sqrt{t(u)-t_0(u)}(\beta+\eps_0)^{1/2}
	\end{eqnarray*}
	and we get \eqref{eq:inft(u)}. Note that $A_{\eps_0/2}\subset A_{\eps_0}$ and
	let
	$$\rho:=\inf_{u\in A_{\eps_0/2}}\int_{t_0(u)}^{t(u)}\|\Phi'(\eta(s,u)\|^2\, ds.$$
	If $\rho=0$ then by \eqref{eq:inft(u)} we find $u_n\in A_{\eps_0/2} $ and $t_n\in (t_0(u_n),t(u_n))$ such that $$\Phi'(\eta(t_n,u_n))\to 0 \hbox{ as } n\to\infty.$$
	Since $t_n> t_0(u_n)$ we have $\eta(t_n,u_n)\in B(\cK^\beta,m_{\beta+\eps_0})$ and passing to a subsequence $$\Phi(\eta(t_n,u_n))\to\alpha\leq \beta+\eps_0/2 < \beta+\eps_0.$$ In view of Lemma \ref{lem:M_betacond}, passing to a subsequence, we obtain
	$$g_n\eta(t_n,u_n)\to u$$
	for some $u\in \cK$ and $g_n\in G$. By \eqref{eq:KL} we get $u\in \cK^\beta$. On the other hand, since $t_n<t(u_n)$ we obtain
	$$g_{n}\eta(t_n,u_n)\notin B(\cK^\beta,\delta/2),$$
	which is a contradiction. Therefore $\rho>0$ and we take 
	$$\eps < \min\Big\{\frac12\eps_0,\frac14\rho\Big\}.$$
	Let $u\in A_\eps\subset A_{\eps_0/2}$ and 
	since
	\begin{eqnarray*}
		\Phi(\eta(t(u),u))-\Phi(\eta(t_0(u),u)) &=&
		-\int_{t_0(u)}^{t(u)}\Phi'(\eta(s,u))(v(\eta(s,u)))\,ds\\
		&\leq& 
		-\frac12\int_{t_0(u)}^{t(u)}\|\Phi'(\eta(s,u)\|^2\, ds,
	\end{eqnarray*}
	we obtain
	\begin{eqnarray*}
		\beta-\eps&\leq& \lim_{t\to T(u)} \Phi(\eta(t,u))\leq \Phi(\eta(t(u),u))\\
		&\leq&\beta +\eps -\frac12\int_{t_0(u)}^{t(u)}\|\Phi'(\eta(s,u)\|^2\, ds
		\leq \beta+\eps -\frac12\rho\\
		&<&\beta-\eps,
	\end{eqnarray*}
	which gives again a contradiction.
	Thus we have finally proved that \eqref{eq:entrancetime1} holds.\\
{\em Conclusion.}	Now take any $\delta<m_{\beta+\eps_0}$ such that
	$$\gamma (\cl B(\cK^\beta,\delta))=\gamma (\cK^\beta).$$
	Let us define the {\em entrance time map} $e:\Phi^{\beta+\eps}_{\beta-\eps_0}\setminus B(\cK^\beta,\delta)\to [0,\infty)$ such that
	$$e(u):=\inf\{t\in [0,T(u)):  \Phi(\eta(s,u))\leq \beta -\eps\}.$$
	It is standard to show that $e$ is continuous and even.
	Moreover we may define a continuous and odd map 
	$h:\Phi^{\beta+\eps}\setminus B(\cK^\beta,\delta)\to \Phi^{\beta-\eps}$
	such that 
	\begin{equation*}
	%\label{eq}
	h(u)=\left\{
	\begin{array}{ll}
	\eta(e(u),u)&\quad \hbox{for }u\in \Phi^{\beta+\eps}_{\beta-\eps_0}\setminus B(\cK^\beta,\delta),\\
	u &\quad \hbox{for }u\in \Phi^{\beta-\eps_0}.
	\end{array}
	\right.
	\end{equation*}
	Let us take $\beta=\beta_k$ defined by \eqref{eq:LSvalue} for some $k\geq 1$. Then
	by  \cite[Proposition II.5.4 $(2^\circ)$ and $(4^\circ)$]{Struwe}
	$$\gamma(\Phi^{\beta+\eps}\setminus B(\cK^\beta,\delta))\leq 
	\gamma(\Phi^{\beta-\eps})\leq k-1$$
	and by  \cite[Proposition II.5.4 $(3^\circ)$]{Struwe}
	\begin{equation}\label{eq:LSvaluse}
	k\leq \gamma(\Phi^{\beta+\eps})\leq \gamma (\cl B(\cK^\beta,\delta))+
	\gamma(\Phi^{\beta+\eps}\setminus B(\cK^\beta,\delta))\leq \gamma(\cK^\beta)+k-1.
	\end{equation}
	Thus $\cK^\beta\neq \emptyset$, and since it has finite number of orbits and (A1) holds, we easy show that there is a continuous and odd map from $\cK^\beta$ with values in $\{-1,1\}$. Thus $\gamma(\cK^\beta)=1$. Note that if $\beta_k=\beta_{k+1}$ for some
	$k\geq 1$, then by \eqref{eq:LSvaluse} we get $\gamma(\cK^{\beta_k})\geq 
	\gamma(\Phi^{\beta_{k}+\eps}) -k = 	\gamma(\Phi^{\beta_{k+1}+\eps}) -k\geq 
	2$, which is a contradiction. Hence we get an infinite sequence
	$\beta_1<\beta_2<...$
	of critical values, which contradicts that $\cK$ consists of a finite number of distinct orbits. 
This completes the proof of (b).
\end{proof}

\begin{Rem}\label{rem:CrtiticalPointTheory}
(a) In this paper we consider the problem \eqref{eq} having the mountain pass geometry, hence we assume that $Y=X=H^1(\R^N)$, $\cM$ is a Pohozaev manifold in  $H^1(\R^N)$, $\cU$ is given by \eqref{PdefofU} and we show that $(M)_\beta\;(i)$ holds with translations $G=\R^N$; see Lemma \ref{lem:Mcond} below. Therefore we will apply  Theorem \ref{Th:CrticMulti} (a) in proof of Theorem \ref{ThMain1}. In order to prove Theorem \ref{ThMain2} we will also consider the setting of this section for $Y=X=X_\tau\cap H^1_{\cO_1}(\R^N)$ with $G=\{0\}\times\{0\}\times\R^{N-2M}$;  see Lemma \ref{lem:Mcond2} below. Moreover, we apply the multiplicity result contained in Theorem \ref{Th:CrticMulti} (c) and we show that $J\circ m_\cU$ satisfies the Palais-Smale condition in  $\cU\cap X_\tau\cap H^{1}_{\cO_2}(\R^N)$; see Lemma \ref{lem:Mcond3}.\\
(b) In this work, however,  we do not apply $(M)_\beta\;(ii)$  and Theorem \ref{Th:CrticMulti} (b) and we would like to mention other possible applications.  Note that for an indefinite functio\-nal $J$, i.e. when $0$ is not a local minimum, one needs to find a proper subspace $Y\subset X$ such that (A3) holds and the Palais-Smale condition may not be satisfied. For instance, as in \cite{SzulkinWeth,MederskiENZ} one can consider a generalized Nehari manifold 
$$\cM=\big\{u\in X\setminus Y': J'(u)(u)=0\hbox{ and }J'(u)|_{Y'}=0\big\},$$ 
where $Y'$ is an orthogonal complement of $Y$ in $X$.
The approaches considered in these works fit into the abstract setting of this section. For instance $X=H^1(\R^N)$, $Y$ and $Y'$ are subspaces, where the Schr\"odinger operator $-\Delta +V$ is positive and negative definite respectively, $0$ lies in a spectral gap of this operator and we can consider the $\Z^N$-periodic problem $-\Delta u + V(x)u=g(x,u)$ as in \cite{SzulkinWeth}. The discreteness of Palais-Smale sequences obtained in \cite[Lemma 2.14]{SzulkinWeth} implies $(M)_\beta\;(ii)$ with $G=\Z^N$ and we can reprove the results of \cite{SzulkinWeth}. Similarly one can recover the variational approach in \cite{MederskiENZ}.
Hence, Theorem \ref{Th:CrticMulti} may be applied to Pohozev as well as Nehari-type topological constraints.
\end{Rem}

\section{Concentration compactness and profile decompositions}\label{sec:Lions}

The following lemma is known if  $\Psi(s)=|s|^p$ with $2<p<2^*$ and is due to Lions \cite[Lemma 1.21]{Willem}, \cite{Lions84}. We will use it to prove the profile decomposition Theorem \ref{ThGerard} with a general function $\Psi$.

\begin{Lem}\label{lem:Lions}
	Suppose that $(u_n)\subset H^{1}(\R^N)$ is bounded and for some $r>0$ 	\begin{equation}\label{eq:LionsCond11}
	\lim_{n\to\infty}\sup_{y\in \R^N} \int_{B(y,r)} |u_n|^2\,dx=0.
	\end{equation}
	Then 
	$$\int_{\R^N} \Psi(u_n)\, dx\to 0\quad\hbox{as } n\to\infty$$
	for any continuous function $\Psi:\R\to [0,\infty)$ such that \eqref{eq:Psi2} holds.
\end{Lem}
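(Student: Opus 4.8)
The plan is to reduce the statement to the classical concentration--vanishing lemma for a single power $|s|^{p_0}$ with the distinguished exponent $p_0:=2+\tfrac{4}{N}$, which satisfies $2<p_0<2^*$, and to absorb the behaviour of $\Psi$ near $0$ and near $\infty$ into the harmless terms $s^2$ and $|s|^{2^*}$ by means of \eqref{eq:Psi2}.

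First I would record the pointwise bound: for every $\eps>0$ there is $C_\eps>0$ with
$$\Psi(s)\le \eps\big(s^2+|s|^{2^*}\big)+C_\eps\,|s|^{p_0}\qquad\text{for all }s\in\R .$$
Indeed, \eqref{eq:Psi2} provides $\delta>0$ and $R>\delta$ so that $\Psi(s)\le\eps s^2$ for $|s|\le\delta$ and $\Psi(s)\le\eps|s|^{2^*}$ for $|s|\ge R$, while on the compact set $\{\,\delta\le|s|\le R\,\}$ the continuous function $\Psi$ is bounded and $|s|^{p_0}\ge\delta^{p_0}>0$, which yields $\Psi(s)\le C_\eps|s|^{p_0}$ there. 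Integrating, and using that $(u_n)$ is bounded in $H^1(\R^N)$, hence in $L^2(\R^N)$ and, by the Sobolev embedding, in $L^{2^*}(\R^N)$, I get
$$\int_{\R^N}\Psi(u_n)\,dx\le \eps\,C+C_\eps\int_{\R^N}|u_n|^{p_0}\,dx,\qquad C:=\sup_{n}\big(\|u_n\|_2^2+\|u_n\|_{2^*}^{2^*}\big)<\infty .$$
Thus everything reduces to showing $\int_{\R^N}|u_n|^{p_0}\,dx\to0$ under \eqref{eq:LionsCond11}.

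For this I would cover $\R^N$ by balls $\{B(y_i,r)\}_i$ with uniformly finite overlap (every point lying in at most $\kappa=\kappa(N)$ of them) and invoke the Gagliardo--Nirenberg--Sobolev inequality on a single ball,
$$\|u\|_{L^{p_0}(B(y_i,r))}\le C\,\|u\|_{L^2(B(y_i,r))}^{1-\alpha}\,\|u\|_{H^1(B(y_i,r))}^{\alpha},\qquad \alpha=\tfrac{N}{N+2},$$
with $C$ independent of $i$ by translation invariance. The exponent $p_0$ is chosen precisely so that $\alpha p_0=2$; raising to the power $p_0$ and summing over $i$ therefore gives
$$\int_{\R^N}|u_n|^{p_0}\,dx\le\sum_i\int_{B(y_i,r)}|u_n|^{p_0}\,dx\le C\,\kappa\Big(\sup_{y\in\R^N}\int_{B(y,r)}|u_n|^2\,dx\Big)^{(1-\alpha)p_0/2}\|u_n\|_{H^1}^{2},$$
where the $H^1$-contributions add up to at most $\kappa\|u_n\|_{H^1}^2$ because $\alpha p_0=2$, and the $L^2$-factor is extracted as a supremum. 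By \eqref{eq:LionsCond11} and the $H^1$-bound the right-hand side tends to $0$, so that $\limsup_{n\to\infty}\int_{\R^N}\Psi(u_n)\,dx\le\eps\,C$; letting $\eps\to0$ completes the proof.

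The step I expect to be the crux is this local-to-global passage: one has to pick $p_0$ so that the Sobolev power $\alpha p_0$ equals exactly $2$, so that the gradient contributions become additive over a finite-overlap covering, while the remaining $L^2$-mass on each ball is controlled uniformly by the vanishing quantity in \eqref{eq:LionsCond11}; once this balance is fixed the summation is routine. If one prefers, the convergence $\int_{\R^N}|u_n|^{p_0}\,dx\to0$ can be quoted directly from Lions \cite{Lions84}, and then only the pointwise splitting above is genuinely needed here.
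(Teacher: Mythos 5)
Your proposal is correct and follows essentially the same route as the paper: the pointwise splitting $\Psi(s)\le\eps(|s|^2+|s|^{2^*})+C_\eps|s|^{p}$ derived from \eqref{eq:Psi2}, combined with the classical vanishing lemma for a single subcritical power and the boundedness of $(u_n)$ in $L^2(\R^N)\cap L^{2^*}(\R^N)$. The only difference is that you re-prove the Lions lemma for the exponent $p_0=2+\tfrac{4}{N}$ via the finite-overlap covering and Gagliardo--Nirenberg argument, whereas the paper simply cites \cite{Willem}[Lemma 1.21]; that argument is the standard proof of that lemma and is carried out correctly.
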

\begin{proof}
	Take any $\eps>0$ and $2<p<2^*$ and suppose that $\Psi$ satisfies \eqref{eq:Psi2}. Then we find $0<\delta<M$ and $c_\eps>0$ such that 
	\begin{eqnarray*}
	\Psi(s)&\leq& \eps |s|^{2}\quad\hbox{ if }|s|\in [0,\delta],\\
	%		\Psi(s)&\leq& c s^{p}\quad\hspace{1.5mm}\hbox{ for }s\in (\delta ,M],\\
	\Psi(s)&\leq& \eps |s|^{2^*}\quad\hbox{ if }|s|>M,\\
	\Psi(s)&\leq& c_\eps |s|^{p}\quad\hbox{ if }|s|\in (\delta,M].
	\end{eqnarray*}
Hence, in view of Lions lemma \cite{Willem}[Lemma 1.21] we get
$$\limsup_{n\to\infty}\int_{\R^N}\Psi(u_n)\, dx\leq \eps \limsup_{n\to\infty}\int_{\R^N}|u_n|^2+|u_n|^{2^*}\, dx.$$
Since $(u_n)$ is bounded in $L^2(\R^N)$ and in $L^{2^*}(\R^N)$, we conclude by letting $\eps\to0$.
\end{proof}

Let us consider $x=(x^1,x^2,x^3)\in\R^N=\R^M\times\R^M\times \R^{N-2M}$ with $2\leq M\leq N/2$ 
such that $x^1,x^2\in\R^M$ and $x^3\in\R^{N-2M}$. Let
$\cO_1:=\cO(M)\times \cO(M)\times\id\subset \cO(N)$ and now we consider invariant functions with respect to $\cO_1$.

\begin{Cor}\label{CorLions1}
	Suppose that $(u_n)\subset H^1_{\cO_1}(\R^N)$ is bounded and for all $r>0$
	\begin{equation}\label{eq:LionsCond12}
	\lim_{n\to\infty}\sup_{z\in \R^{N-2M}} \int_{B((0,0,z),r)} |u_n|^2\,dx=0.
	\end{equation}
	Then 
	$$\int_{\R^N} \Psi(u_n)\, dx\to 0\quad\hbox{as } n\to\infty$$
	for any continuous function $\Psi:\R\to [0,\infty)$ such that \eqref{eq:Psi2} holds.
\end{Cor}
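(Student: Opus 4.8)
The plan is to deduce Corollary \ref{CorLions1} from Lemma \ref{lem:Lions} by exploiting the $\cO_1$-invariance of the functions $u_n$ to boost the weaker vanishing hypothesis \eqref{eq:LionsCond12}, which only controls balls centered on the $\R^{N-2m}$-axis $\{(0,0,z)\}$, into the full vanishing hypothesis \eqref{eq:LionsCond11}, which controls balls centered at every $y\in\R^N$. Once \eqref{eq:LionsCond11} is established, Lemma \ref{lem:Lions} applies verbatim and gives the conclusion. So the entire content is the implication \eqref{eq:LionsCond12} $\Rightarrow$ \eqref{eq:LionsCond11}, and the key point is a covering argument using the symmetry.

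First I would fix a radius $r>0$ and an arbitrary center $y=(y^1,y^2,y^3)\in\R^N$ with $y^1,y^2\in\R^m$, $y^3\in\R^{N-2m}$, and set $s:=|(y^1,y^2)|=(|y^1|^2+|y^2|^2)^{1/2}$, the distance of $y$ from the axis. Because $u_n$ is invariant under $\cO_1=\cO(m)\times\cO(m)\times\id$, the integral $\int_{B(y,r)}|u_n|^2\,dx$ depends only on $s$ and $y^3$; rotating $(y^1,y^2)$ by an element of $\cO(m)\times\cO(m)$ moves $B(y,r)$ to another ball with the same integral. The crucial geometric observation is that the $\cO(m)\times\cO(m)$-orbit of the center $y$ is a product of two spheres of radii $|y^1|$ and $|y^2|$ sitting at height $y^3$, and I can pack a fixed number $k=k(r/s)$ of disjoint translates $B(y_1,r),\dots,B(y_k,r)$ with centers on this orbit whenever $s$ is large; since all these balls carry the same $L^2$-mass and are disjoint, the single-ball mass is at most $\frac1k\int_{\R^N}|u_n|^2\,dx$, which is uniformly bounded. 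The standard quantitative statement (see the analogous covering in \cite{BartschWillem}) is that the number of disjoint $r$-balls one can place on a product of two $m$-spheres of radius $s$ grows like a positive power of $s$ as $s\to\infty$, using $m\geq 2$.

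Concretely, I would argue as follows. Choose $R>0$ large enough (depending only on $r$ and the uniform bound $\sup_n\|u_n\|_{L^2}$) so that for every $y$ with $s=|(y^1,y^2)|\geq R$ one can fit at least, say, $n+1$ disjoint translates of $B(y,r)$ along the orbit, forcing $\int_{B(y,r)}|u_n|^2\,dx\le \frac{1}{n+1}\sup_n\|u_n\|_{L^2}^2$, which tends to $0$ uniformly in such $y$ as $n\to\infty$. It then remains to handle centers with $s<R$, i.e. those lying in the bounded tube $\{|(y^1,y^2)|<R\}$ around the axis. But any ball $B(y,r)$ with $|(y^1,y^2)|<R$ is contained in the ball $B((0,0,y^3),R+r)$, so by the hypothesis \eqref{eq:LionsCond12} applied with the radius $R+r\ge r_0$ (enlarging $r_0$ if necessary, which is permitted since \eqref{eq:LionsCond12} is assumed for all $r\geq r_0$), the supremum of $\int_{B(y,r)}|u_n|^2$ over such centers is dominated by $\sup_{z}\int_{B((0,0,z),R+r)}|u_n|^2\to 0$. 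Combining the two regimes gives $\sup_{y\in\R^N}\int_{B(y,r)}|u_n|^2\,dx\to 0$, i.e. \eqref{eq:LionsCond11}.

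I expect the main obstacle to be making the covering count rigorous and uniform: one must verify that when $s$ is large, the $\cO(m)\times\cO(m)$-orbit through $y$ genuinely admits arbitrarily many $r$-separated points, and that this packing number is independent of $y^3$ and of the splitting of $s$ between $|y^1|$ and $|y^2|$. This is exactly where the assumption $m\geq 2$ enters—when at least one of the two factor spheres $S^{m-1}$ is nondegenerate and its radius is a definite fraction of $s$, its surface area grows without bound, accommodating any prescribed number of disjoint caps of fixed radius $r$; the degenerate case where both $|y^1|$ and $|y^2|$ are small is precisely the bounded-tube regime already dispatched by \eqref{eq:LionsCond12}. Once this elementary but slightly fiddly packing estimate is in hand, the rest is a direct reduction to Lemma \ref{lem:Lions}.
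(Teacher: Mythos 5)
Your reduction is the same as the paper's: both proofs deduce \eqref{eq:LionsCond11} from \eqref{eq:LionsCond12} by packing disjoint $\cO_1$-translates of a ball along the orbit of a center far from the axis $\{(0,0)\}\times\R^{N-2m}$ (using $m\ge 2$), and by absorbing balls near the axis into a single ball $B((0,0,y^3),R+r)$ controlled by the hypothesis; the paper simply runs this as a proof by contradiction with $r=1$ and then invokes Lemma \ref{lem:Lions}, exactly as you do. One quantifier in your large-$s$ regime is off as written: a radius $R$ depending only on $r$ and $\sup_n\|u_n\|_{L^2}$ cannot guarantee $n+1$ disjoint translates for every $n$, since the packing number $k(R)$ of the orbit is a function of $R$ and $r$ alone; consequently the bound $\tfrac{1}{k(R)}\sup_n\|u_n\|_{L^2}^2$ is a fixed constant, not a quantity tending to $0$ in $n$. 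The repair is routine: either fix $\eps>0$, choose $R=R(\eps)$ with $k(R)^{-1}\sup_n\|u_n\|_{L^2}^2<\eps$, and conclude $\limsup_{n\to\infty}\sup_{y\in\R^N}\int_{B(y,r)}|u_n|^2\,dx\le\eps$ for every $\eps>0$; or argue by contradiction as the paper does, observing that a sequence of centers $y_n$ with $\int_{B(y_n,r)}|u_n|^2\,dx\ge c>0$ must have $|(y_n^1,y_n^2)|$ bounded, because otherwise the orbit of $y_n$ would carry arbitrarily many disjoint balls each of mass at least $c$, contradicting the $L^2$-bound. With that adjustment your argument is complete and coincides with the paper's.
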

\begin{proof}
	Suppose that
	\begin{equation}\label{eq:LionsCond12proof1}
	\int_{B(y_n,1)} |u_n|^2\,dx\geq c>0
	\end{equation}
	for some sequence $(y_n)\subset \R^N$ and a constant $c$.
	Observe that in the family $\{B(hy_n,1)\}_{h\in\cO_1}$ we find an increasing number of disjoint balls provided that $|(y^1_n,y^2_n)|\to\infty$. Since $(u_n)$ is bounded in $L^2(\R^N)$ and invariant with respect to $\cO_1$, by \eqref{eq:LionsCond12proof1} $|(y^1_n,y^2_n)|$ must be bounded. Then for sufficiently large $r\geq r_0$ one obtains
	$$\int_{B((0,0,y_n^3),r)} |u_n|^2\,dx\geq \int_{B(y_n,1)} |u_n|^2\,dx\geq  c>0,$$
	and we get a contradiction with \eqref{eq:LionsCond12}. Therefore  \eqref{eq:LionsCond11} is satisfied with $r=1$ and by Lemma \ref{lem:Lions} we conclude.
\end{proof}

\begin{Rem} Instead of $\cO_1$ in Corollary \ref{CorLions1} one can consider any subgroup $G=\cO'\times\id\subset \cO(N)$ such that $\cO'\subset \cO(M)$ and $\R^M$ is compatible with $\cO'$ for some $0\leq M\leq N$
(in the sense of  \cite[Definition 1.23]{Willem}, cf. \cite{Lions82}), i.e. if
$\lim_{|y|\to\infty, y\in\R^M}m(y,r)=\infty$ for some $r>0$,
where
$$m(y,r):=\sup\big\{n\in\N: \hbox{there exist }g_1,...,g_n\in\cO'\hbox{ such that } B(g_iy,r)\cap B(g_jy,r)=\emptyset\hbox{ for }i\neq j\big\}.$$
\end{Rem}

Now let us assume in addition that  $N-2M\neq 1$ and
$$\cO_2:=\cO(M)\times \cO(M)\times\cO(N-2M)\subset \cO(N).$$
In view of \cite{Lions82}, $H^1_{\cO_2}(\R^N)$ embeds compactly into $L^p(\R^N)$ for $2<p<2^*$. In order to deal with the general nonlinearity we need the following result.

\begin{Cor}\label{CorLions2}
	Suppose that $(u_n)\subset H^1_{\cO_2}(\R^N)$ is bounded and $u_n\to 0$ in $L^2_{loc}(\R^N)$.
	Then
	$$\int_{\R^N} \Psi(u_n)\, dx\to 0\quad\hbox{as } n\to\infty$$
	for any continuous function $\Psi:\R\to [0,\infty)$ such that \eqref{eq:Psi2} holds.
\end{Cor}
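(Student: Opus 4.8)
The plan is to mimic the proof of Lemma \ref{lem:Lions}, replacing the direct application of Lions lemma by the compact embedding of $H^1_{\cO_2}(\R^N)$ into $L^p(\R^N)$ for $2<p<2^*$ (quoted from \cite{Lions82} just above the statement). The first and decisive step is to upgrade the hypothesis $u_n\to 0$ in $L^2_{loc}(\R^N)$ to the global statement $u_n\to 0$ in $L^p(\R^N)$ for one fixed exponent $2<p<2^*$.

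To obtain this I would argue as follows. By the compact embedding, $(u_n)$ is relatively compact in $L^p(\R^N)$, so every subsequence admits a further subsequence converging strongly in $L^p$ to some limit $u$. On the other hand, on any bounded ball $B$ interpolation gives $\|u_n\|_{L^p(B)}\le \|u_n\|_{L^2(B)}^{1-\theta}\|u_n\|_{L^{2^*}(B)}^{\theta}$ with $\tfrac1p=\tfrac{1-\theta}2+\tfrac{\theta}{2^*}$ and $\theta\in(0,1)$; since $\|u_n\|_{L^2(B)}\to 0$ by hypothesis and $\|u_n\|_{L^{2^*}(B)}$ stays bounded (as $(u_n)$ is bounded in $H^1(\R^N)$, hence in $L^{2^*}(\R^N)$), we deduce $u_n\to 0$ in $L^p_{loc}(\R^N)$. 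Consequently any strong $L^p$ limit $u$ must vanish on every ball, so $u=0$. As every subsequence thus has a further subsequence converging to $0$ in $L^p$, the whole sequence converges: $u_n\to 0$ in $L^p(\R^N)$.

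With this in hand the second step is literally the splitting of Lemma \ref{lem:Lions}. Given $\eps>0$, using \eqref{eq:Psi2} I would choose $0<\delta<M$ and $c_\eps>0$ so that $\Psi(s)\le\eps|s|^2$ for $|s|\le\delta$, $\Psi(s)\le\eps|s|^{2^*}$ for $|s|>M$, and $\Psi(s)\le c_\eps|s|^p$ for $\delta<|s|\le M$. Then
\[
\int_{\R^N}\Psi(u_n)\,dx\le \eps\int_{\R^N}\big(|u_n|^2+|u_n|^{2^*}\big)\,dx+c_\eps\int_{\R^N}|u_n|^p\,dx .
\]
The last integral tends to $0$ by the first step, while the bracketed integrals stay bounded because $(u_n)$ is bounded in $L^2(\R^N)$ and in $L^{2^*}(\R^N)$; hence $\limsup_{n\to\infty}\int_{\R^N}\Psi(u_n)\,dx\le \eps C$, and letting $\eps\to 0$ concludes.

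The only genuinely nontrivial point is the first step, and within it the identification of the strong $L^p$ limit: the compact embedding by itself yields relative compactness but says nothing about the value of the limit, and it is precisely the $L^2_{loc}$ hypothesis — promoted to $L^p_{loc}$ convergence through interpolation against the uniform $L^{2^*}$ bound — that forces every subsequential limit to vanish. Everything after that is a routine repetition of the estimate in Lemma \ref{lem:Lions}.
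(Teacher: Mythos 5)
Your argument is correct, but it follows a genuinely different route from the paper's. The paper does not invoke the compact embedding at all: it verifies the Lions vanishing condition \eqref{eq:LionsCond11} directly and then applies Lemma \ref{lem:Lions}. Concretely, arguing as in Corollary \ref{CorLions1}, if $\int_{B(y_n,1)}|u_n|^2\,dx\geq c>0$ with $|y_n|\to\infty$, then the $\cO_2$-orbit $\{B(gy_n,1)\}_{g\in\cO_2}$ contains an increasing number of pairwise disjoint balls (here $m\geq 2$ and $N-2m\neq 1$ are used so that each factor sphere of large radius contains many well-separated points), contradicting the $L^2$-bound; hence any concentration points stay in a fixed ball $B(0,r)$, where the hypothesis $u_n\to 0$ in $L^2_{loc}$ kills them. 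Your proof instead takes the compact embedding $H^1_{\cO_2}(\R^N)\hookrightarrow L^p(\R^N)$, $2<p<2^*$, as a black box, identifies the strong $L^p$ limit as $0$ by interpolating the $L^2_{loc}$ hypothesis against the uniform $L^{2^*}$ bound, and then reruns the three-region splitting of $\Psi$ from Lemma \ref{lem:Lions} by hand; the splitting step is exactly what is needed to pass from a single power $|s|^p$ to a general $\Psi$ satisfying \eqref{eq:Psi2}, and you execute it correctly. What each approach buys: yours is shorter given the quoted embedding from \cite{Lions82}, but it outsources to that theorem the same orbit-counting that the paper carries out explicitly; the paper's route is self-contained modulo Lemma \ref{lem:Lions} and is uniform with the $\cO_1$ case of Corollary \ref{CorLions1}, where no compact embedding into $L^p(\R^N)$ is available because of the free translation directions in $\R^{N-2m}$.
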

\begin{proof}
	Observe that for all $r>0$
	$$\lim_{n\to\infty}\int_{B(0,r)} |u_n|^2\,dx=0$$
	and similarly as in proof of Corollary \ref{CorLions1} we complete the proof.
\end{proof}

\begin{altproof}{Theorem \ref{ThGerard}}
	Let $(u_n)\subset H^1(\R^N)$ be a bounded sequence and $\Psi$ as in Theorem \ref{ThGerard}.
	We claim that, passing to a subsequence, there is $K\in \N\cup \{\infty\}$ and there is a sequence
	$(\tu_i)_{i=0}^K\subset H^1(\R^N)$, for $0\leq i <K+1$$(\footnote{If $K=\infty$ then $K+1=\infty$ as well.})$  there are sequences $(v_n^i)\subset H^1(\R^N)$, $(y_n^i)\subset \R^N$ and positive numbers $(c_i)_{i=0}^{K}, (r_i)_{i=0}^{K}$ such that $y_n^0=0$, $r_0=0$ and for any  $0\leq i<K+1$ one has
	\begin{eqnarray}	\label{Eqxnxm1}
	&&u_n(\cdot+y_n^i)\weakto\tu_i\hbox{ in }H^1(\R^N)\hbox{ and }u_n(\cdot+y_n^i)\chi_{B(0,n)}\to\tu_i\hbox{ in }L^{2}(\R^N)\hbox{ as }n\to\infty,\\	\label{Eqxnxm2}
	&&\tu_i\neq 0\hbox{ if }i\geq 1,\\
	\label{Eqxnxm}
	&&|y_n^i-y_n^j|\geq n-r_i-r_j\hbox{ for } 0\leq j\neq i< K+1 \hbox{ and sufficeintly large }n,\\	\label{Eqxnxm3}
	&& v_n^{-1}:=u_n\hbox{ and }v_n^i:=v_n^{i-1}-\tu_i(\cdot-y_n^i)\hbox{ for }n\geq 1\\\label{EqIntegralunSumci}
	&&\int_{B(y_n^{i},r_i)}|v_n^{i-1}|^2\, dx \geq c_{i}\geq\frac{1}{2}\sup_{y\in\R^N}\int_{B(y,r_i)}|v_n^{i-1}|^2\, dx\hbox{ for sufficienlty large }n,\\
	&&	r_i\geq \max\{i,r_{i-1}\},\hbox{if }i\geq 1,\hbox{ and } c_i= \frac{3}{4}\lim_{r\to\infty}\limsup_{n\to\infty}\sup_{y\in\R^N}\int_{B(y,r)}|v_n^{i-1}|^2\,dx\nonumber
	>0.
	\end{eqnarray}
	Moreover \eqref{EqSplit2a} is satisfied. 
	Since $(u_n)$ is bounded, 
	passing to a subsequence we may assume that $\lim_{n\to\infty}\int_{\R^N}|\nabla u_n|^2\,dx$ exists and
	\begin{eqnarray*}
		u_n &\weakto& \tu_0\quad \hbox{ in }H^1(\R^N),\\
		u_n\chi_{B(0,n)} &\to& \tu_0\quad \hbox{ in }L^2(\R^N).
	\end{eqnarray*}
	The latter convergence follows from the fact that for any $n$, $H^1(B(0,n))$ is compactly embedded into $L^2(B(0,n))$ and	
	we find sufficiently large $k_n$ such that $|(u_{k_n}-\tu_0)\chi_{B(0,n)}|_2<\frac1n$, where $\chi_{B(0,n)}$ is the characteristic function of $B(0,n)$ and  $|\cdot|_p$ denotes the usual $L^p$-norm  for $p\geq 1$. The subsequence $(u_{k_n})$ is then relabelled by $(u_{n})$.
	Take $v_n^0:=u_n-\tu_0$ and
	if
	\begin{equation*}
	\lim_{n\to\infty}\sup_{y\in\R^N}\int_{B(y,r)}|v_n^0|^2\, dx=0
	\end{equation*}
	for every $r\geq 1$,
	then we finish the proof of our claim with $K=0$.
	Otherwise we get
	$$\infty>\sup_{n\geq 1}\int_{\R^N}|v_n^0|^2\,dx\geq c_1:=\frac34\lim_{r\to\infty}\limsup_{n\to\infty}\sup_{y\in\R^N}\int_{B(y,r)}|v_n^0|^2\, dx>0,$$
	and there is $r_1\geq 1$ and, passing to a subsequence, we find $(y_n^1)\subset\R^N$  such that
	\begin{equation}\label{eq:LemProofLions1}
	\int_{B(y_n^{1},r_1)}|v_n^0|^2\, dx \geq c_{1}\geq \frac{1}{2}\sup_{y\in\R^N} \int_{B(y,r_1)}|v_n^0|^2\, dx.
	\end{equation}
	Note that $(y_n^1)$ is unbounded and we may assume that $|y_n^1|\geq n-r_1$.  Since $(u_n(\cdot+y_n^1))$ is bounded in $H^1(\R^N)$, up to a subsequence, we find $\tu_1\in H^1(\R^N)$ such that 
	$$u_n(\cdot+y_n^1)\weakto \tu_1\quad\hbox{in }H^1(\R^N).$$ In view of \eqref{eq:LemProofLions1},  we get $\tu_1\neq 0$, and again we may assume that $u_n(\cdot+y_n^1)\chi_{B(0,n)}\to \tu_1$ in $L^2(\R^N)$.  
	Since
	$$\lim_{n\to\infty}\Big(\int_{\R^N}|\nabla (u_n-\tu_0)(\cdot +y_n^1)|^2\, dx-\int_{\R^N}|\nabla v_n^1(\cdot +y_n^1)|^2\,dx\Big)=\int_{\R^N}|\nabla \tu_1|^2\, dx,$$
	where  $v_n^1:=v_n^0-\tu_1(\cdot -y_n^1)=u_n-\tu_0-\tu_1(\cdot -y_n^1)$ ,
	then
	\begin{eqnarray*}
	\lim_{n\to\infty}\int_{\R^N}|\nabla u_n|^2\, dx &=&\int_{\R^N}|\nabla \tu_0|^2\,dx+\int_{\R^N}|\nabla \tu_1|^2\, dx+\lim_{n\to\infty}\int_{\R^N}|\nabla v_n^1|^2\, dx
	\end{eqnarray*}
	If 
	\begin{equation*}%\label{eq:LionscondTheta}
	\lim_{n\to\infty}\sup_{y\in\R^N}\int_{B(y,r)}|v_n^1|^2\, dx=0
	\end{equation*}
	for every $r\geq \max\{2,r_1\}$,
	then we finish the proof of our claim with $K=1$. Otherwise,  $c_2:=\frac{3}{4}\lim_{r\to\infty}\limsup_{n\to\infty}\sup_{y\in\R^N}\int_{B(y,r)}|v_n^{1}|^2\,dx>0,$ 
	there is $r_2\geq \max\{2,r_1\}$ and, passing to a subsequence, we find  $(y_n^2)\subset\R^N$ such that
	\begin{equation}\label{eq:LemProofLions2}
	\int_{B(y_n^{2},r_2)}|v_n^1|^2\, dx \geq c_{2}\geq \frac{1}{2}\sup_{y\in\R^N} \int_{B(y,r_2)}|v_n^1|^2\, dx
	\end{equation}
	and $|y_n^2|\geq n-r_2$. Moreover $|y_n^2-y_n^1|\geq n-r_2-r_1$. Otherwise $B(y_n^2,r_2)\subset B(y_n^1,n)$ and the convergence $u_n(\cdot+y_n^1)\chi_{B(0,n)}\to \tu_1$ in $L^2(\R^N)$ contradicts \eqref{eq:LemProofLions2}.
	Then, passing to a subsequence, we find $\tu_2\neq 0$ such that 
	$$v_n^1(\cdot+y_n^2),\;u_n(\cdot +y_n^2)\weakto \tu_2\text{ in }H^1(\R^N)\hbox{ and }u_n(\cdot+y_n^2)\chi_{B(0,n)}\to \tu_2\hbox{ in } L^2(\R^N).$$ Again, if 
	\begin{equation*}
	\lim_{n\to\infty}\sup_{y\in\R^N}\int_{B(y,r)}|v_n^2|^2\, dx=0,
	\end{equation*}
	for every $r\geq \max\{3,r_2\}$, where $v_n^2:=v_n^1-\tu_2(\cdot-y_n^2)$,
	then we finish proof with $K=2$. Continuing the above procedure, for each $i\geq 1$ we find a subsequence of $(u_n)$, still denoted by $(u_n)$, such that \eqref{Eqxnxm1}--\eqref{EqIntegralunSumci} and \eqref{EqSplit2a} are satisfied. Similarly as above, if there is $i\geq 0$ such that
	\begin{equation}\label{eq:Kfinite}
	\lim_{n\to\infty}\sup_{y\in\R^N}\int_{B(y,r)}|v_n^i|^2\, dx=0
	\end{equation}
   for every $r\geq \max\{n,r_{i-1}\}$, then $K:=i$ and we finish proof of the claim.
Otherwise, $K=\infty$, by a standard diagonal method and passing to a subsequence, we show that  \eqref{Eqxnxm1}--\eqref{EqIntegralunSumci} and \eqref{EqSplit2a} are satisfied for every $i\geq 0$.\\
\indent 
	Now we show that \eqref{EqSplit3a} holds. 
	Observe that
	\begin{eqnarray*}
	\lim_{n\to\infty}\int_{\mathbb{R}^N}\big(\Psi(u_n)-\Psi(v_n^0)\big)\, dx &=&\int_{\mathbb{R}^N}\Psi(\tu_0)\, dx.
	\end{eqnarray*}
	Indeed, by Vitali's convergence theorem 
	\begin{eqnarray*}
		\int_{\mathbb{R}^N}\big(\Psi(u_n)-\Psi(v_n^0)\big)\, dx
		&=&\int_{\mathbb{R}^N}\int_0^1 -\frac{d}{ds}\Psi(u_n-s\tu_0)\, ds\,dx\\\nonumber
		&=&\int_{\mathbb{R}^N}\int_0^1 \Psi'(u_n-s\tu_0)\tu_0\,ds\, dx\\\nonumber
		&\rightarrow& \int_0^1 \int_{\mathbb{R}^N} \Psi'(\tu_0-s\tu_0)\tu_0\,dx\,ds\\\nonumber
		&=&\int_{\mathbb{R}^N}\int_0^1 -\frac{d}{ds}\Psi(\tu_0-s\tu_0)\, ds\, dx\\
		&=&\int_{\mathbb{R}^N}\Psi(\tu_0)\, dx\nonumber
	\end{eqnarray*}
	as $n\to\infty$. 
	Then 
	\begin{equation}\label{eq:Vit1}
	\limsup_{n\to\infty}\int_{\mathbb{R}^N}\Psi(u_n)\, dx =\int_{\mathbb{R}^N}\Psi(\tu_0)\, dx+\limsup_{n\to\infty}\int_{\R^N}\Psi(v_n^0)\, dx
	\end{equation}
	and \eqref{EqSplit3a} holds for $i=0$.
	Similarly as above 
	we show that
	$$\lim_{n\to\infty}\int_{\mathbb{R}^N}\big(\Psi((u_n-\tu_0)(\cdot +y_n^1))-\Psi(v_n^1(\cdot +y_n^1))\big)\, dx=\int_{\mathbb{R}^N}\Psi(\tu_1)\, dx.$$
	In view of \eqref{eq:Vit1} we obtain
	\begin{eqnarray*}
		\limsup_{n\to\infty}\int_{\mathbb{R}^N}\Psi(u_n)\, dx &=&\int_{\mathbb{R}^N}\Psi(\tu_0)\, dx+\limsup_{n\to\infty}\int_{\R^N}\Psi(u_n-\tu_0)\, dx\\	
		&=&\int_{\mathbb{R}^N}\Psi(\tu_0)\, dx+\int_{\mathbb{R}^N}\Psi(\tu_1)\, dx+\limsup_{n\to\infty}\int_{\R^N}\Psi(v_n^1)\, dx.
	\end{eqnarray*}
	Continuing the above procedure we prove that \eqref{EqSplit3a} holds for every $i\geq 0$. Now observe that, if there is $i\geq 0$ such that \eqref{eq:Kfinite} holds
	for every $r\geq \max\{i,r_i\}$,
	then $K=i$. If, in addition, \eqref{eq:Psi2} holds, then in view of Lemma \ref{lem:Lions} we obtain that
	$$\lim_{n\to\infty}\int_{\R^N}\Psi(v_n^i)\, dx=0$$
	and we finish the proof by setting $\tu_j=0$ for $j>i$. 
	Otherwise we have $K=\infty$. It remains to prove \eqref{EqSplit4a} in this case. Note that by \eqref{EqIntegralunSumci} we have
	\begin{eqnarray*}
		c_{k+1}&\leq& \int_{B(y_n^{k+1},r_{k+1})}
		|v_n^k|^2\, dx\\
		&\leq&2\int_{B(y_n^{k+1},r_{k+1})}|v_n^i|^2\, dx+2\int_{B(y_n^{k+1},r_{k+1})}\Big|\sum_{j=i+1}^{k}\tu_j(\cdot -y_n^j)\Big|^2\, dx\\
		&\leq&2\sup_{y\in\R^N}\int_{B(y,r_{k+1})}|v_n^{i}|^2\,dx+ 2(k-i)\sum_{j=i+1}^{k}\int_{B(y_n^{k+1}-y_n^j,r_{k+1})}|\tu_j|^2\, dx
	\end{eqnarray*}
	for any $0\leq  i<k$.
	Taking into account \eqref{Eqxnxm} and letting $n\to\infty$ we get $c_{k+1}\leq \frac83 c_{i+1}$. Take $k\geq 1$ and  sufficiently large $n>4r_{k}$ such that \eqref{EqIntegralunSumci} and \eqref{Eqxnxm} are satisfied. Then we obtain
	\begin{equation*}
	\begin{aligned}
	\frac{3}{32}\sup_{y\in\R^N}\int_{B(y,r_{k+1})}|v_n^k|^2\,dx
	&\leq \frac{3}{16} c_{k+1}\leq \frac{1}{2k} \sum_{i=0}^{k-1} c_{i+1}\leq\frac{1}{2k} \sum_{i=0}^{k-1}
	\int_{B(y_n^{i+1},r_{i+1})}|v_n^i|^2\, dx\\
	&\leq\frac{1}{k} \sum_{i=0}^{k-1}
	\int_{B(y_n^{i+1},r_{i+1})}\Big(|u_n|^2 + \Big|\sum_{j=0}^{i}\tu_j(\cdot -y_n^j)\Big|^2\Big)\, dx\\
	&= \frac{1}{k}
	\int_{\bigcup_{i=0}^{k-1}B(y_n^{i+1},r_{i+1})}|u_n|^2\,dx +\frac1k\int_{\R^N} \Big|\sum_{i=0}^{k-1}\sum_{j=0}^{i}\tu_j(\cdot -y_n^j)\chi_{B(y_n^{i+1},r_{i+1})}\Big|^2\, dx\\
	&\leq \frac{1}{k}
	|u_n|_2^2+\frac1k\Big|\sum_{i=0}^{k-1}\sum_{j=0}^{i}\tu_j(\cdot -y_n^j)\chi_{B(y_n^{i+1},r_{i+1})}\Big|^2_2.
	\end{aligned}
	\end{equation*}
	Observe that by \eqref{Eqxnxm} and since $n>4r_{k}$ we have
	$$B(y_n^{i+1}-y_n^j,r_{i+1})\subset \R^N\setminus B(0,n-3r_k)\hbox{ for }0\leq j< i<k$$
	and
	\begin{eqnarray*}
		\Big|\sum_{i=0}^{k-1}\sum_{j=0}^{i}\bar{u}_j(\cdot -y_n^j)\chi_{B(y_n^{i+1},r_{i+1})}\Big|_{2}&\leq& \nonumber
		\sum_{i=0}^{k-1}\sum_{j=0}^{i}\big|\bar{u}_j\chi_{B(y_n^{i+1}-y_n^j,r_{i+1})}\big|_{2}\leq \sum_{i=0}^{k-1}\sum_{j=0}^{i}\big|\bar{u}_j\chi_{\R^N\setminus B(0,n-3r_k)}\big|_{2}\\
		&\leq& k\sum_{j=0}^{k-1}\big|\bar{u}_j\chi_{\R^N\setminus B(0,n-3r_k)}\big|_{2}\to 0
	\end{eqnarray*}
	as $n\to\infty$. Hence
	\begin{equation}\label{eq:ineqGer1}
	\limsup_{n\to\infty}\Big(\sup_{y\in\R^N}\int_{B(y,r_{k+1})}|v_n^k|^2\,dx\Big)\leq  \frac{32}{3k}
	\limsup_{n\to\infty}|u_n|_2^2,
	\end{equation}
and
	suppose that \eqref{EqSplit4a} does not holds, that is
	\begin{equation}\label{eq:ineqGer2}
	\limsup_{i\to\infty} \Big(\limsup_{n\to\infty} \int_{\R^N}\Psi(v_n^i)\, dx\Big)>\delta
	\end{equation}
	for some $\delta>0$. Then we find increasing sequences $(i_k), (n_k)\subset \N$ such that
	$$\int_{\R^N}\Psi(v_{n_k}^{i_k})\, dx>\delta$$
	and 
	$$\sup_{y\in\R^N}\int_{B(y,r_{k+1})}|v_{n_k}^{i_k}|^2\,dx\leq \limsup_{n\to\infty}\Big(\sup_{y\in\R^N}\int_{B(y,r_{k+1})}|v_n^{i_k}|^2\,dx\Big)+\frac{1}{i_k}.$$
	Since \eqref{eq:ineqGer1} holds, we get
	$$\lim_{k\to\infty} \Big(\sup_{y\in\R^N}\int_{B(y,r_{k+1})}|v_{n_k}^{i_k}|^2\,dx\Big)=0,$$
	and in view of Lemma \ref{lem:Lions} we obtain that
	$$\lim_{k\to\infty} \int_{\R^N}\Psi(v_{n_k}^{i_k})\, dx=0,$$
	which is a contradiction. Hence \eqref{EqSplit4a}  is satisfied.			
\end{altproof}

Now we observe that in Theorem \ref{ThGerard} we may find translations $(y_n^i)_{i=0}^\infty\subset  \{0\}\times\{0\}\times\R^{N-2M}$ provided that
$(u_n)\subset H^{1}_{\cO_1}(\R^N)$ and $2\leq m< N/2$ .
\begin{Cor}\label{CorGerard}
	Suppose that $(u_n)\subset H^{1}_{\cO_1}(\R^N)$ is bounded and $2\leq M< N/2$.
	Then there are sequences
	$(\tu_i)_{i=0}^\infty\subset H^1_{\cO_1}(\R^N)$, $(y_n^i)_{i=0}^\infty\subset  \{0\}\times\{0\}\times\R^{N-2M}$ for any $n\geq 1$, such that the statements of Theorem \ref{ThGerard} are satisfied.
\end{Cor}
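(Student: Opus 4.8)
The plan is to adapt the proof of Theorem \ref{ThGerard} to the $\cO_1$-invariant setting, where the only genuinely new point is controlling the location of the translation vectors $(y_n^i)$. The overall strategy is identical: extract a weak limit $\tu_0$ (with $y_n^0=0$), set $v_n^0:=u_n-\tu_0$, and then iterate. At each step, if $\lim_{n\to\infty}\sup_{y\in\R^N}\int_{B(y,r)}|v_n^i|^2\,dx=0$ for all large $r$, we stop; otherwise we extract a new profile centered at a new translation. The splitting identities \eqref{EqSplit2a}--\eqref{EqSplit4a} then follow verbatim from the corresponding steps in the proof of Theorem \ref{ThGerard}, since the arguments there (Vitali's theorem for \eqref{EqSplit3a}, and the combinatorial estimate using \eqref{EqIntegralunSumci} together with Lemma \ref{lem:Lions} for \eqref{EqSplit4a}) use only boundedness in $H^1(\R^N)$ and the growth hypotheses on $\Psi$, not any symmetry.

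The first thing I would verify is that each profile $\tu_i$ inherits $\cO_1$-invariance. This is automatic: since $u_n\in H^1_{\cO_1}(\R^N)$ and $H^1_{\cO_1}(\R^N)$ is a closed subspace, weak limits stay in it; but one must check the weak limit of the \emph{translated} sequence $u_n(\cdot+y_n^i)$ remains $\cO_1$-invariant. The key is that we will choose $y_n^i\in\{0\}\times\{0\}\times\R^{N-2m}$, and for such translations the map $u\mapsto u(\cdot+y_n^i)$ preserves the $\cO_1=\cO(m)\times\cO(m)\times\id$ symmetry, because $\cO_1$ fixes the third variable and acts only on the first two. Hence $u_n(\cdot+y_n^i)$ is again $\cO_1$-invariant, and so is its weak limit $\tu_i$.

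The main obstacle, and the heart of the corollary, is to show that the translations can indeed be taken in $\{0\}\times\{0\}\times\R^{N-2m}$. Here I would use the argument already carried out in the proof of Corollary \ref{CorLions1}. When $\sup_{y}\int_{B(y,r)}|v_n^{i-1}|^2\,dx$ does not vanish, we obtain $(y_n^i)\subset\R^N$ with $\int_{B(y_n^i,r_i)}|v_n^{i-1}|^2\,dx\geq c_i>0$. Writing $y_n^i=(y_n^{i,1},y_n^{i,2},y_n^{i,3})$, I claim $|(y_n^{i,1},y_n^{i,2})|$ stays bounded: otherwise, applying the $\cO_1$-action produces arbitrarily many disjoint translates $B(gy_n^i,r_i)$, each carrying mass $\geq c_i$ (by $\cO_1$-invariance of $v_n^{i-1}$, which holds since $\tu_j$ and $y_n^j$ respect the symmetry), contradicting the boundedness of $v_n^{i-1}$ in $L^2(\R^N)$. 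Thus $|(y_n^{i,1},y_n^{i,2})|$ is bounded, and after replacing $y_n^i$ by $(0,0,y_n^{i,3})$ and enlarging $r_i$ suitably (using $2\leq m<N/2$ so that $N-2m\geq 1$ and the third factor is nontrivial), the ball $B((0,0,y_n^{i,3}),r_i)$ still captures mass $\geq c_i$. This is exactly the reduction in Corollary \ref{CorLions1}, now applied at each inductive step to $v_n^{i-1}$ rather than $u_n$.

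Finally, I would check the orbit-separation condition $|y_n^i-y_n^j|\to\infty$. Since all translations now lie in the third factor, this reduces to separation of the scalars $y_n^{i,3}$, which follows from the same reasoning as in Theorem \ref{ThGerard}: the convergence $u_n(\cdot+y_n^j)\chi_{B(0,n)}\to\tu_j$ in $L^2$ would be contradicted by \eqref{EqIntegralunSumci} if $B((0,0,y_n^{i,3}),r_i)\subset B((0,0,y_n^{j,3}),n)$. With these points established, the remaining splitting relations \eqref{EqSplit2a}--\eqref{EqSplit4a} transfer directly from Theorem \ref{ThGerard}, using Corollary \ref{CorLions1} in place of Lemma \ref{lem:Lions} at the terminal step, and the proof is complete.
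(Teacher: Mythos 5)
Your proposal is correct and follows essentially the same route as the paper: adapt the inductive construction of Theorem \ref{ThGerard}, use the disjoint-orbit-balls argument of Corollary \ref{CorLions1} to force the concentration centers into $\{0\}\times\{0\}\times\R^{N-2m}$ (so that the translated sequences, hence the profiles, remain $\cO_1$-invariant), and invoke Corollary \ref{CorLions1} in place of Lemma \ref{lem:Lions} for \eqref{EqSplit4a}. The only cosmetic difference is that the paper restricts the supremum in \eqref{EqIntegralunSumci} to third-factor centers from the outset, whereas you extract with the full supremum over $\R^N$ and then recenter; both are valid.
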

\begin{proof}
A careful inspection of proof of Theorem \ref{ThGerard} leads to the following claim: there is $K\in \N\cup \{\infty\}$ and there is a sequence
$(\tu_i)_{i=0}^K\subset H^1(\R^N)$, for $0\leq i <K+1$ there are sequences $(v_n^i)\subset H^1_{\cO_1}(\R^N)$, $(y_n^i)\subset \{0\}\times\{0\}\times\R^{N-2M}$ and positive numbers $(c_i)_{i=0}^{K}, (r_i)_{i=0}^{K}$ such that $y_n^0=0$, $r_0=0$ and, up to a subsequence, for any $n$ and $0\leq i<K+1$ one has
\eqref{Eqxnxm1}-\eqref{Eqxnxm3},
\begin{eqnarray*}
&&\int_{B(y_n^{i},r_i)}|v_n^{i-1}|^2\, dx \geq c_{i}\geq\frac{1}{2}\sup_{y\in\R^{N-2M}}\int_{B((0,0,y),r_i)}|v_n^{i-1}|^2\, dx\\
&&\hspace{4.1cm}	\geq \frac{1}{4}\sup_{r>0,y\in\R^{N-2M}} \int_{B((0,0,y),r)}|v_n^{i-1}|^2\, dx\nonumber
>0, r_i\geq \max\{i,r_{i-1}\} \hbox{ for }i\geq 1,
\end{eqnarray*}
and \eqref{EqSplit2a}, \eqref{EqSplit3a} are satisfied. In order to prove \eqref{EqSplit4a} we use Corollary \ref{CorLions1} instead of Lemma \ref{lem:Lions}. 
\end{proof}

Observe that if $m=N/2$, then we consider $\cO_2$-invariant sequences. In general we assume that  $N-2M\neq 1$ and we have the following result.

\begin{Cor}\label{cor:LionsO2}
Suppose that $(u_n)\subset H^{1}_{\cO_2}(\R^N)$ is bounded.
Then passing to a subsequence we find $\tu_0\in H^{1}_{\cO_2}(\R^N)$ such that
\begin{eqnarray*}
\nonumber
&& u_n\weakto \tu_0\; \hbox{ in } H^1(\R^N)\text{ as }n\to\infty,\\
&& \lim_{n\to\infty}\int_{\R^N}|\nabla u_n|^2\, dx= \int_{\R^N}|\nabla\tu_0|^2\, dx+\lim_{n\to\infty}\int_{\R^N}|\nabla (u_n-\tu_0)|^2\, dx,\\
&& \limsup_{n\to\infty}\int_{\R^N}\Psi(u_n)\, dx= 
\int_{\R^N}\Psi(\tu_0)\, dx+\limsup_{n\to\infty}\int_{\R^N}\Psi(u_n-\tu_0)\, dx	
\end{eqnarray*}
for any function $\Psi:\R\to[0,\infty)$ of class $\cC^1$ such that $\Psi'(s)\leq C(|s|+|s|^{2^*-1})$ for any $s\in\R$ and some constant $C>0$.
Moreover, if $\Psi$ satisfies \eqref{eq:Psi2},
then
\begin{equation*}%\label{EqSplit4a}
\lim_{n\to\infty}\int_{\R^N}\Psi(u_n-\tu_0)\, dx=0
\end{equation*}
and if $s\mapsto |\Psi'(s)s|$ satisfies \eqref{eq:Psi2}, then
\begin{equation}\label{EqPsiLast}
\lim_{n\to\infty}\int_{\R^N}\Psi'(u_n)u_n\, dx=\int_{\R^N}\Psi'(\tu_0)\tu_0\, dx.
\end{equation}
\end{Cor}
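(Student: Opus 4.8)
The strategy is to read the first three identities off Theorem \ref{ThGerard}, to obtain the $L^2_{loc}$-to-global vanishing from Corollary \ref{CorLions2}, and to prove \eqref{EqPsiLast} by an equi-integrability (Vitali) argument that exploits the \emph{compact} embedding available for $\cO_2$-invariant functions.

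First, since $(u_n)$ is bounded in the reflexive space $H^1(\R^N)$, after passing to a subsequence $u_n\weakto\tu_0$ for some $\tu_0$; as $H^1_{\cO_2}(\R^N)$ is a closed, hence weakly closed, subspace, $\tu_0\in H^1_{\cO_2}(\R^N)$. Applying Theorem \ref{ThGerard} to $(u_n)$ and retaining only the index $i=0$, for which $y_n^0=0$ and $v_n^0=u_n-\tu_0$, yields at once the asserted weak convergence, the gradient splitting \eqref{EqSplit2a}, and the $\Psi$-splitting \eqref{EqSplit3a}; here I pass to a further subsequence so that $\lim_n\int_{\R^N}|\nabla u_n|^2\,dx$ and $\lim_n\int_{\R^N}|\nabla(u_n-\tu_0)|^2\,dx$ both exist. (Alternatively the gradient identity follows by expanding $\int|\nabla(u_n-\tu_0)|^2$ and using $\nabla u_n\weakto\nabla\tu_0$ in $L^2$, and the $\Psi$-identity from the Vitali computation already carried out in the proof of Theorem \ref{ThGerard}.)

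For the vanishing statement I set $w_n:=u_n-\tu_0$, so that $(w_n)\subset H^1_{\cO_2}(\R^N)$ is bounded and $w_n\weakto 0$. By the Rellich--Kondrachov theorem the weak $H^1$-convergence forces $w_n\to 0$ in $L^2_{loc}(\R^N)$, whence Corollary \ref{CorLions2} applies to $(w_n)$ and gives $\int_{\R^N}\Psi(u_n-\tu_0)\,dx\to 0$ for every $\Psi$ satisfying \eqref{eq:Psi2}. This is precisely where the $\cO_2$-symmetry (and the hypothesis $N-2m\neq 1$) enters: weak convergence only kills the local part, and it is the symmetry, through Corollary \ref{CorLions2}, that rules out mass escaping to infinity.

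Finally, to prove \eqref{EqPsiLast} I would argue by equi-integrability rather than by the fundamental-theorem-of-calculus splitting used for $\Psi$ itself, since $\Psi$ is only $\cC^1$ and hence $s\mapsto\Psi'(s)s$ is merely continuous and possibly sign-changing. Because $s\mapsto|\Psi'(s)s|$ satisfies \eqref{eq:Psi2}, arguing as in the proof of Lemma \ref{lem:Lions} one has, for each $\eps>0$, some $c_\eps>0$ and $2<p<2^*$ with
\begin{equation*}
|\Psi'(s)s|\leq \eps\big(|s|^2+|s|^{2^*}\big)+c_\eps|s|^p\qquad\text{for all }s\in\R.
\end{equation*}
Since $H^1_{\cO_2}(\R^N)$ embeds compactly into $L^p(\R^N)$ for $2<p<2^*$, one gets $u_n\to\tu_0$ strongly in $L^p(\R^N)$, hence a.e. along a subsequence. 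A convergent sequence in $L^p(\R^N)$ is uniformly integrable and tight, so the displayed bound, together with the boundedness of $(u_n)$ in $L^2\cap L^{2^*}$, shows that $\{\Psi'(u_n)u_n\}$ is uniformly integrable and tight in $L^1(\R^N)$; as $\Psi'(u_n)u_n\to\Psi'(\tu_0)\tu_0$ a.e., Vitali's convergence theorem gives \eqref{EqPsiLast}. I expect this last step to be the main obstacle: controlling the sign-changing integrand $\Psi'(u_n)u_n$ at infinity requires the genuinely global compactness of the $\cO_2$-invariant embedding, not merely the local Rellich theorem used for the previous step.
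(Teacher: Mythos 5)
Your proposal is correct and follows essentially the same route as the paper: the three splittings are the $i=0$ case of Theorem \ref{ThGerard} (i.e.\ the identity \eqref{eq:Vit1}), the vanishing statement is Corollary \ref{CorLions2} applied to $u_n-\tu_0$ (which lies in $H^1_{\cO_2}(\R^N)$ and tends to $0$ in $L^2_{loc}$ by Rellich), and \eqref{EqPsiLast} rests on the compact embedding $H^1_{\cO_2}(\R^N)\hookrightarrow L^p(\R^N)$, $2<p<2^*$, combined with a Vitali-type argument. The only (harmless) deviation is in the last step: the paper splits $|\Psi'(u_n)u_n-\Psi'(\tu_0)\tu_0|\le|\Psi'(u_n)|\,|u_n-\tu_0|+|\Psi'(u_n)-\Psi'(\tu_0)|\,|\tu_0|$, handling the first term by H\"older and the second by Vitali, whereas you establish uniform integrability and tightness of the product $\Psi'(u_n)u_n$ directly from the bound $|\Psi'(s)s|\le\eps(|s|^2+|s|^{2^*})+c_\eps|s|^p$ and apply Vitali once; both versions hinge on exactly the same compactness and are equally valid.
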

\begin{proof}
Similarly as in proof of Theorem \ref{ThGerard} we show that passing to a subsequence\\ $\lim_{n\to\infty}\int_{\R^N}|\nabla u_n|^2\,dx$ exists and 
\eqref{eq:Vit1} holds. Due to the compact embedding of $H^1(\R^N)$ into $L^2_{loc}(\R^N)$ we may assume that  $u_n-\tu_0\to 0$ in $L^2_{loc}(\R^N)$. Then we apply Corollary \ref{CorLions2} instead of Lemma \ref{lem:Lions}. In order to prove \eqref{EqPsiLast} observe that 
\begin{eqnarray*}
\int_{\R^N}|\Psi'(u_n)u_n-\Psi'(\tu_0)\tu_0|\, dx
&\leq& \int_{\R^N}|\Psi'(u_n)||u_n-\tu_0|\, dx+\int_{\R^N}|\Psi'(u_n)-\Psi'(\tu_0)||\tu_0|\, dx.
\end{eqnarray*}
Take any $\eps>0$, $2<p<2^*$ and we find $0<\delta<M$ and $c_\eps>0$ such that 
\begin{eqnarray*}
	|\Psi'(s)|&\leq& \eps (|s|+|s|^{2^*-1})\quad\hbox{ if }|s|\in [0,\delta]\hbox{ or }|s|>M,\\
	|\Psi'(s)|&\leq& c_\eps |s|^{2^*(1-\frac1p)}\quad\hbox{ if }|s|\in (\delta,M].
\end{eqnarray*}
Then, passing to a subsequence, $u_n\to \tu_0$ in $L^p(\R^N)$ and
we infer that 
$\int_{\R^N}|\Psi'(u_n)||u_n-\tu_0|\, dx\to 0$
 and
by the Vitali's convergence theorem  $ \int_{\R^N}|\Psi'(u_n)-\Psi'(\tu_0)||\tu_0|\, dx\to 0$ as $n\to\infty$.
\end{proof}

\section{Proofs of Theorems \ref{ThMain1},  \ref{ThMain2} and  \ref{ThMain3}}\label{sec:proof}

Let us consider the standard norm of $u$ in $H^1(\R^N)$ given by
$$\|u\|^2=\int_{\R^N}|\nabla u|^2+|u|^2\, dx.$$
In view of \cite{BerLions}[Theorem A.VI], $J:H^1(\R^N)\to \R$ given by \eqref{eq:action}, i.e. $J(u)=\frac12\int_{\R^N} |\nabla u|^2\,dx - \int_{\R^N} G(u)\, dx$
 is of class $\cC^1$. 
In the next subsections we build the variational setting according to Section \ref{sec:criticaltheory}.

\subsection{Critical point theory setting}
Let $X=Y=H^1(\R^N)$ and let $M,\psi:H^1(\R^N)\to \R$ be given by 
$$M(u)=\int_{\R^N}|\nabla u|^2\, dx-2^*\int_{\R^N}G(u)\, dx,\hbox{ and }\psi(u)=\int_{\R^N}|\nabla u|^2\, dx\quad\hbox{ for }u\in H^1(\R^N).$$ 
\begin{Prop}\label{prop:defMPSU}
	Let us denote
	\begin{eqnarray*}
		\cM&:=&\Big\{u\in H^1(\R^N): M(u)=0\Big\},\\
		\cS&:=&\Big\{u\in H^1(\R^N): \psi(u)=1\Big\},\\
		\cP&:=&\Big\{u\in H^1(\R^N): \int_{\R^N}G(u)\, dx>0\Big\},\\
		\cU&:=&\cS\cap\cP.
	\end{eqnarray*}
	Then the following holds.\\
	(i) There is a continuous map $m_\cP:\cP\to \cM$ such that $m_\cP(u)(x)=u(rx)$ for $x\in \R^N$ with 
	\begin{equation}\label{eq:defOfR}
	r=r(u)=\Big(\frac{2^*\int_{\R^N}G(u)\, dx}{\psi(u)}\Big)^{1/2}>0.
	\end{equation}
	(ii)  $m_\cU:=m_\cP|_{\cU}:\cU\to\cM$ is a homeomorphism with the inverse $m^{-1}(u)=u(\psi(u)^{\frac{1}{N-2}} \cdot)$, $J\circ m_{\cP}:\cP\to\R$ is of class $\cC^1$ with
	\begin{eqnarray*}
	(J\circ m_\cP)'(u)(v)&=&J'(m_\cP(u))(v(r(u)\cdot))\\
	&=&r(u)^{2-N}\int_{\R^N}\langle \nabla u,  \nabla v\rangle\,dx-r(u)^{-N}\int_{\R^N}g(u)v\, dx
	\end{eqnarray*}
	for $u\in\cP$ and $v\in H^1(\R^N)$.\\
	(iii) $J$ is coercive on $\cM$, i.e. for $(u_n)\subset \cM$, $J(u_n)\to\infty$ as $\|u_n\|\to\infty$,  and
	\begin{equation}\label{eq:infJM}
	c:=\inf_{\cM} J>0.
	\end{equation}
	(iv) If $u_n\to u$, $u_n\in \cU$ and $u\in\partial\cU=\big\{u\in\cS:\int_{\R^N}G(u)\,dx=0\big\}$, where the boundary of $\cU$ is taken in $\cS$, then $(J\circ m_\cU)(u)\to\infty$ as $n\to\infty$.
\end{Prop}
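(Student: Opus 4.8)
The plan is to establish the four assertions in order, since each builds on the previous. The central object is the rescaling $u \mapsto u(r\cdot)$, under which $\psi$ and $\int G(u)\,dx$ transform by explicit powers of $r$, so I would begin by recording these scaling laws.

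\textbf{Part (i): construction of $m_{\cP}$.}
First I would compute, for $u\in\cP$ and $w_r:=u(r\cdot)$, that $\psi(w_r)=r^{2-N}\psi(u)$ and $\int_{\R^N}G(w_r)\,dx=r^{-N}\int_{\R^N}G(u)\,dx$. Substituting into the Pohozaev constraint $M(w_r)=\psi(w_r)-2^*\int G(w_r)\,dx=0$ gives $r^{2-N}\psi(u)=2^*r^{-N}\int G(u)\,dx$, i.e. $r^2 = 2^*\int G(u)\,dx/\psi(u)$. Since $u\in\cP$ forces $\int G(u)\,dx>0$ and $u\neq 0$ forces $\psi(u)>0$, the quantity under the square root in \eqref{eq:defOfR} is positive, so $r(u)$ is uniquely determined and $m_{\cP}(u)=u(r(u)\cdot)\in\cM$. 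Continuity of $m_{\cP}$ follows once I verify continuity of $u\mapsto\int_{\R^N}G(u)\,dx$ (using the growth control coming from (g1), (g2)' and the Sobolev embedding) together with continuity of the scaling map $(u,r)\mapsto u(r\cdot)$ on $H^1(\R^N)\times(0,\infty)$.

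\textbf{Part (ii): $m$ is a homeomorphism and the derivative formula.}
On $\cU=\cS\cap\cP$ one has $\psi(u)=1$, so $r(u)=\big(2^*\int G(u)\,dx\big)^{1/2}$. To identify the inverse I would start from a point $w\in\cM$, seek $\rho>0$ with $\psi(w(\rho\cdot))=1$, solve $\rho^{2-N}\psi(w)=1$ to get $\rho=\psi(w)^{1/(N-2)}$, and check that the resulting map sends $\cM$ into $\cU$ and inverts $m$; bicontinuity then follows from continuity of both explicit formulas. For the derivative, I would invoke (A3)/the already-asserted $\cC^1$ regularity of $J\circ m_{\cP}$ and differentiate the composition: writing $m_{\cP}(u)=u(r(u)\cdot)$ and using that $J'(m_{\cP}(u))$ annihilates the radial scaling direction precisely because $m_{\cP}(u)\in\cM$ (the Pohozaev identity is exactly the vanishing of $\frac{d}{dr}J(u(r\cdot))$ at the optimal $r$), the term involving $r'(u)$ drops out, leaving $(J\circ m_{\cP})'(u)(v)=J'(m_{\cP}(u))(v(r(u)\cdot))$; changing variables in the two integrals defining $J'$ produces the stated powers $r^{2-N}$ and $r^{-N}$.

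\textbf{Parts (iii) and (iv): coercivity, positivity of $c$, and blow-up at the boundary.}
For (iii) I would use the decomposition $G=G_1-G_2$ with \eqref{eq:NewCond1}--\eqref{eq:NewCond2}: on $\cM$ the constraint gives $\tfrac12\psi(u)=\tfrac12\cdot 2^*\int G(u)\,dx$, and the energy reduces to $J(u)=\big(\tfrac12-\tfrac1{2^*}\big)\psi(u)=\tfrac1N\psi(u)$, so $J$ controls the Dirichlet norm on $\cM$; combined with the $L^2$-coercivity forced by $G_2(s)\geq ms^2/2$ through the constraint, this yields coercivity in the full $H^1$-norm and $J(u_n)\to\infty$ as $\|u_n\|\to\infty$. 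Positivity $c>0$ would follow by showing $\inf_{\cM}\psi>0$: using $\int G(u)\,dx\le \int G_1(u)\,dx$, the growth bounds \eqref{eq:NewCond1} and Sobolev give $\int G_1(u)\,dx\le \eps\,|u|_2^2+C_\eps\,\psi(u)^{2^*/2}$, and on $\cM$ this forces a strictly positive lower bound on $\psi(u)$. For (iv), a sequence $u_n\to u$ in $\cS$ with $u\in\partial\cU$ must have $\int G(u)\,dx=0$ (the only way to leave $\cP$ while staying in $\cS$), hence $\int G(u_n)\,dx\to 0^+$; since $r(u_n)^2=2^*\int G(u_n)\,dx\to 0$ and $J(m(u_n))=\tfrac1N\psi(m(u_n))=\tfrac1N r(u_n)^{2-N}\psi(u_n)=\tfrac1N r(u_n)^{2-N}\to\infty$ (as $N>2$), we get $(J\circ m)(u_n)\to\infty$.

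\textbf{Main obstacle.}
I expect the delicate point to be the derivative computation in (ii): justifying rigorously that the contribution of $r'(u)(v)$ vanishes. The clean way is to observe that $m_{\cP}(u)\in\cM$ means $\frac{\partial}{\partial r}J(u(r\cdot))\big|_{r=r(u)}=0$, so even though $r$ depends on $u$, the chain rule term along the scaling direction is killed by the Pohozaev criticality; making this precise (and checking the requisite differentiability of $u\mapsto\int g(u)u\,dx$ type quantities under only (g0)--(g3)) is where the real care is needed. The coercivity in (iii) is a secondary difficulty, since the general nonlinearity gives no Ambrosetti--Rabinowitz structure, and one must lean entirely on the constraint identity $J=\tfrac1N\psi$ on $\cM$ together with the splitting $G=G_1-G_2$.
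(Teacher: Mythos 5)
Your proposal is correct and follows essentially the same route as the paper: the scaling laws $\psi(u(r\cdot))=r^{2-N}\psi(u)$, $\int G(u(r\cdot))=r^{-N}\int G(u)$ give $r(u)$ and the inverse $u\mapsto u(\psi(u)^{1/(N-2)}\cdot)$; the $r'(u)$-term in the chain rule is annihilated precisely because its coefficient is a multiple of $M(m_{\cP}(u))=0$ (the paper's explicit difference-quotient computation produces exactly the term $\tfrac{2-N}{2}r(u)^{-1}r'(u)(v)\,M(m_{\cP}(u))$); and coercivity plus $c>0$ rest on $J=\tfrac1N\psi$ on $\cM$ together with the splitting $G=G_1-G_2$ and \eqref{eq:NewCond1}--\eqref{eq:NewCond2}. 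Your derivation of (iv) directly from $J(m(u_n))=\tfrac1N r(u_n)^{2-N}\psi(u_n)$ is a harmless shortcut of the paper's argument via coercivity and $\|m(u_n)\|\to\infty$.
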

\begin{proof}
(i)	If $u\in \P$ then 
	\begin{eqnarray*}
		M(u(r\cdot)) &=& r^{-N}\Big(r^{2}\int_{\R^N} |\nabla u|^2\, dx - 2^*\int_{\R^N} G(u)\,dx\Big)=0
	\end{eqnarray*}
	for $r=r(u)$ given by \eqref{eq:defOfR}.
	Let $m_{\cP}:\P\to\cM$ be a map such that 
	$$m_{\cP}(u):=u(r(u)\cdot).$$
	Let $u_n\to u_0$, $u_n\in \cP$ for $n\geq 0$. 
	Observe that $r(u_n)\to r(u_0)$ and
	\begin{eqnarray*}
		\psi(m_\cP(u_n)-m_\cP(u_0)) &=& \int_{\R^N} \big| \nabla \big(u_n(r(u_n)\cdot )-u_0(r(u_0)\cdot)\big)\big|^2\, dx\\
		&\leq & 2r(u_n)^{2-N}\psi(u_n-u_0)+ 2\int_{\R^N} \big| \nabla \big(u_0(r(u_n)\cdot)-u_0(r(u_0)\cdot)\big)\big|^2\, dx\\
		&\leq & 2r(u_n)^{2-N}\psi(u_n-u_0) + 2\big(r(u_n)^{2-N}-r(u_0)^{2-N}\big)\psi(u_0)\\
		&&+4r(u_0)^{2-N}
		\int_{\R^N}\Big\langle \nabla u_0-\nabla u_0\Big(\frac{r(u_n)}{r(u_0)}\cdot\Big),\nabla u_0\Big\rangle\, dx\\
		&\to& 0
	\end{eqnarray*}
	passing to a subsequence. Similarly we show that $m_{\cP}(u_n)\to m_{\cP}(u_0)$ in $L^2(\R^N)$, hence 
	$m_\cP$ is continuous.\\ 
(ii) Observe that
	$$m_\cP^{-1}(u):=\{v\in \P: m_\cP(v)=u\}=\{u_\lambda: u_\lambda=u(\lambda \cdot),\; \lambda>0\}.$$
Then
$1=\psi(u_\lambda)=\lambda^{2-N}\psi(u)$
if and only if $\lambda = \psi(u)^{\frac{1}{N-2}}$. Therefore $m^{-1}(u)=u( \psi(u)^{\frac{1}{N-2}}\cdot)\in \cU$. Similarly as in (i) we show the continuity of $m^{-1}:\cM\to\cU$.
	Moreover for $u\in\cP$ and $v\in X$ one obtains
	$$
	 \begin{aligned}
		(J\circ m_{\cP})&'(u)(v)
		=\lim_{t\to 0}\frac{J(m_{\cP}(u+tv))-J(m_{\cP}(u))}{t}\\
		&=\lim_{t\to 0}\frac{(r(u+tv)^{2-N}-r(u)^{2-N})\int_{\R^N} |\nabla u|^2\, dx
			+r(u+tv)^{2-N}t\int_{\R^N} \langle \nabla(2u+tv),\nabla v\rangle\, dx}{2t}\\
		&\hspace{5mm}- \lim_{t\to 0}\frac{(r(u+tv)^{-N}-r(u)^{-N})\int_{\R^N} G(u)\,dx+r(u+tv)^{-N}\int_{\R^N}G(u+tv)-G(u)\, dx}{t}
			 \end{aligned}
				$$
			 		$$
			 		\begin{aligned}
		&\hspace{-15mm}=\frac{2-N}{2}r(u)^{1-N}r'(u)(v)\int_{\R^N}|\nabla u|^2\, dx+r(u)^{2-N}\int_{\R^N}\langle \nabla u, \nabla v\rangle\,dx\\
		&\hspace{-10mm}-\Big((-N)r(u)^{-N-1}r'(u)(v)\int_{\R^N}G(u)\, dx+r(u)^{-N}\int_{\R^N}g(u)v\, dx\Big)\\
		&\hspace{-15mm}=
		\frac{2-N}{2}r(u)^{-N-1}r'(u)(v)\Big( r(u)^2\int_{\R^N}|\nabla u|^2\, dx-
		2^*\int_{\R^N}G(u)\, dx\Big)\\
		&\hspace{-10mm}+ r(u)^{-N}\Big(r(u)^2\int_{\R^N}\langle \nabla u,  \nabla v\rangle\,dx-\int_{\R^N}g(u)v\, dx\Big)\\
		&\hspace{-15mm}=\frac{2-N}{2}r(u)^{-1} r'(u)(v) M(m_{\cP}(u)) + J'(m_\cP(u))(v(r(u)\cdot)\\
		&\hspace{-15mm}=J'(m_{\cP}(u))(v(r(u)\cdot).
		\end{aligned}
	$$
(iii) Let us introduce the following auxiliary functions $g_1(s)=\max\{g(s)+ms,0\}$ and $g_2(s)=g_1(s)-g(s)$ for $s\geq 0$ and $g_i(s)=-g_i(-s)$ for $s<0$. Then $g_1(s),g_2(s)\geq 0$ for $s\geq 0$,
\begin{eqnarray}\label{eq:NewCond1}
\lim_{s\to 0} g_1(s)/s&=&\lim_{s\to \infty}g_1(s)/s^{2^*-1}=0\\
g_2(s)&\geq& m s \quad \hbox{ for }s\geq 0,\label{eq:NewCond2}
\end{eqnarray}
and let
$$G_i(s)=\int_0^s g_i(t)\, dt\quad \hbox{ for }i=1,2.$$ 
The condition \eqref{eq:NewCond1} will be important e.g. to apply Theorem \ref{ThGerard} with $\Psi(s)=G_1(s)$, whereas  \eqref{eq:NewCond2} is used below to estimate the $L^2$-norm. Namely,
suppose that for some $(u_n)\subset\cU$
$$J(m_\cU(u_n))=\Big(\frac12-\frac{1}{2^*}\Big)\int_{\R^N}|\nabla m_\cU(u_n)|^2\, dx$$
is bounded.  Then we obtain that $m_\cU(u_n)$ is bounded in $L^{2^*}(\R^N)$ and by \eqref{eq:NewCond1}, $\int_{\R^N}G_1(m_\cU(u_n))\, dx$ is bounded as well. By \eqref{eq:NewCond2} and since $m_\cU(u_n)\in\cM$, we infer that $m_\cU(u_n)$ is bounded in $H^1(\R^N)$. Thus $J$ is coercive on $\cM$. Observe that 
for some constants $0<C_1<C_2$ one has
\begin{eqnarray*}
|m_\cU(u_n)|_{2^*}^2+|m_\cU(u_n)|_{2}^2&\leq& C_1 \int_{\R^N}\big(|\nabla m_\cU(u_n)|^2+2^*G_2(m_\cU(u_n))\big)\, dx
=C_1 2^*\int_{\R^N}G_1(m_\cU(u_n))\, dx\\
& \leq& |m_\cU(u_n)|_{2}^2+C_2 |m_\cU(u_n)|_{2^*}^{2^*}
\end{eqnarray*}
and 
we conclude that $|m_\cU(u_n)|_{2^*}\geq C_2^{-1/(2^*-2)}>0$. Hence $c=\inf_{\cM} J>0$.\\
(iv) Note that if $u_n\to u\in\partial \cU$ and $u_n\in \cU$, then $r(u_n)\to 0$ and 
$$\|m_\cU(u_n)\|^2=r(u_n)^{2-N}+r(u_n)^{-N}|u_n|_2^2\to \infty$$
as $n\to\infty$. Hence by the coercivity,
$J(m_\cU(u_n))\to\infty$ as $n\to\infty$.
\end{proof}

Now observe that we may consider the group of translations $G=\R^N$ acting on $X=H^1(\R^N)$, i.e.  
$$(yu)(x)=u(x+y)$$ for $y\in\R^N$, $u\in X$, $x\in\R^N$,
and in view of Proposition \ref{prop:defMPSU} conditions (A1)--(A3) are satisfied.\\
\indent In the similar way we may consider the following subgroup of translations $G=\{0\}\times\{0\}\times\R^{N-2M}$ acting on $X=X_\tau\cap H^1_{\cO_1}(\R^N)$ and conditions (A1)--(A3) are satisfied provided that instead of  $\cM$, $\cS$, $Y=X=H^1(\R^N)$, $\cU$, $m_\cP$ and $m$, we consider $\cM\cap X$, $\cS\cap X$, $Y=X=X_\tau\cap H^1_{\cO_1}(\R^N)$, $\cU\cap X$, $m|_{\cP\cap X}:\cP\cap X\to \cM\cap X$ and $m|_{\cU\cap X}:\cU\cap X\to \cM\cap X$ respectively.\\
\indent Finally, in case of $X=X_\tau\cap H^1_{\cO_2}(\R^N)$ we consider the trivial group $G=\{(0,0,0)\}$ acting on  $X$.

\begin{Rem}\label{remTau}
We show how to easy construct functions in $X_\tau\cap H^1_{\cO_2}(\R^N)$. Let 
$u\in H^1_0(B(0,R))\cap L^{\infty}(B(0,R))$ be $\cO(N)$-invariant (radial) function, $R>1$ and take any odd and smooth function $\vp:\R\to [0,1]$ such that $\vp(x)=1$ for $x\geq 1$ and $\vp(x)=-1$ for $x\leq -1$. Note that, defining
\begin{equation*}
\tu(x_1,x_2,x_3):=u\big(\sqrt{|x_1|^2+|x_2|^2+|x_3|^2}\big)\vp(|x_1|-|x_2|)\;\hbox{ for }x_1,x_2\in\R^M,x_3\in\R^{N-2M},
\end{equation*}
we get $\tu\in X_\tau\cap H^1_{\cO_2}(\R^N)$. Take $A:=\mathrm{ess \sup }\;|u|$ and $B:=\max_{s\in [0,A]}|G(s)|$. Let us denote $r=|x|$ and $r_i=|x_i|$ for $i=1,2,3$. Observe that
\begin{eqnarray*}
\int_{\R^N} G(\tu)\, d x&=&\int_{0}^\infty\int_{0}^\infty\int_0^\infty  G(\tu)r_1^{m-1}r_2^{m-1}r_3^{N-2M-1}\, d r_1dr_2 dr_3\\
&=&2\int_{0}^R\int_{0}^R\int_{r_2}^{r_2+R}G(\tu)r_1^{m-1}r_2^{m-1}r_3^{N-2M-1}\, d r_1dr_2 dr_3
\end{eqnarray*}
\begin{eqnarray*}
&=&2\int_{0}^R\int_{0}^R\int_{r_2}^{r_2+R}G(u(r))r_1^{m-1}r_2^{m-1}r_3^{N-2M-1}\, d r_1dr_2 dr_3\\
&&-2\int_{0}^R\int_{0}^R\int_{r_2}^{r_2+1}G(u(r))r_1^{m-1}r_2^{m-1}r_3^{N-2M-1}\, d r_1dr_2 dr_3\\
&&+2\int_{0}^R\int_{0}^R\int_{r_2}^{r_2+1}G(u(r)\vp(r_1-r_2))r_1^{m-1}r_2^{m-1}r_3^{N-2M-1}\, d r_1dr_2 dr_3\\
&\geq& \int_{\R^N} G(u)\, dx-c_1B\Big(\sum_{i=N-m}^{N-1}R^i\Big)
\end{eqnarray*}
for some constant $c_1>0$ dependent only on $N$. In \cite{BerLions}[page 325], for any $R>0$ one can find a radial function $u\in H^1_0(B(0,R))\cap L^{\infty}(B(0,R))$ such that $\int_{\R^N}G(u)\, dx\geq c_2 R^N-c_3 R^{N-1}$ for some constants $c_2,c_3>0$. Therefore we get $\int_{\R^N}G(\tu)\, dx>0$ for sufficiently large $R$, hence $\cP\cap X_\tau\cap H^1_{\cO_2}(\R^N)\neq\emptyset$ and $\cM\cap X_\tau\cap H^1_{\cO_2}(\R^N)\neq\emptyset$.
\end{Rem}
 
\subsection{$\theta$-analysis of Palais-Smale sequences}

Below we explain the role of $\theta$ in the analysis of Palais-Smale sequences of $J\circ m_\cU$.

\begin{Lem}\label{lem:theta}
Suppose that $(u_n)\subset \cU$ is a $(PS)_\beta$-sequence of $J\circ m_\cU$ such that 
	$$m_\cU(u_n)(\cdot +y_n)\weakto\tu\neq 0\hbox{ in } H^1(\R^N)$$ for some sequence $(y_n)\subset \R^N$ and $\tu\in H^1(\R^N)$. Then $\tu$ solves
	\begin{equation}\label{eq:thetaEQ}
	-\theta \Delta u = g(u),\hbox{ where }\theta:=\psi(\tu)^{-1}\int_{\R^N}g(\tu)\tu\,dx,
	\end{equation}
	and  passing to a subsequence 
	\begin{equation}\label{eq:theta2}
	\theta=\lim_{n\to\infty}\psi(m_\cU(u_n))^{-1}\int_{\R^N} g(m_\cU(u_n))m_\cU(u_n)\,dx.
	\end{equation}
	Moreover $\theta\neq 0$ and
	\begin{equation}\label{eq:theta}
	\theta=2^*\psi(\tu)^{-1}\int_{\R^N}G(\tu)\,dx.
	\end{equation}
If $\theta>0$, then $m_{\cP}(\tu)\in\cM$ is a critical point of $J$. If $\theta\geq 1$, then $J(m_{\cP}(\tu))\leq \beta$.
	\end{Lem}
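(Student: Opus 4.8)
The plan is to extract from the vanishing of $(J\circ m)'(u_n)$ an \emph{asymptotic} Euler--Lagrange identity for $w_n:=m(u_n)\in\cM$, in which the one scaling degree of freedom of the Pohozaev constraint shows up as the scalar $\theta$, and then to pass to the weak limit after translation. As preliminaries I would first note $\beta\ge\inf_{\cM}J=c>0$ (since $w_n\in\cM$); combined with the coercivity of $J$ on $\cM$ (Proposition \ref{prop:defMPSU}(iii)) and the identity $J(w)=(\tfrac12-\tfrac1{2^*})\psi(w)=\tfrac1N\psi(w)$ for $w\in\cM$, this gives that $(w_n)$ is bounded in $H^1(\R^N)$ and that $\psi(w_n)=r(u_n)^{2-N}\to N\beta>0$, so the scaling factors $r_n:=r(u_n)$ stay bounded and bounded away from $0$; hence $u_n=m^{-1}(w_n)=w_n(r_n^{-1}\cdot)$ is bounded in $H^1$ as well. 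Since $\psi$, $\int g(\cdot)\cdot\,$, $\int G(\cdot)$, $J$ and $\cM$ are translation invariant, I may replace $w_n$ by $\tilde w_n:=w_n(\cdot+y_n)\weakto\tu$ throughout.

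The key step is to prove, with $\theta_n:=\psi(w_n)^{-1}\int_{\R^N}g(w_n)w_n\,dx$ (the quantity in \eqref{eq:theta2}),
\[
\theta_n\int_{\R^N}\langle\nabla w_n,\nabla\phi\rangle\,dx-\int_{\R^N}g(w_n)\phi\,dx\longrightarrow0\qquad\text{for every fixed }\phi\in H^1(\R^N).
\]
To get this I would use the formula for $(J\circ m_\cP)'$ in Proposition \ref{prop:defMPSU}(ii) and decompose the admissible direction relative to the tangent space $T_{u_n}\cS$: writing $v:=\phi(r_n^{-1}\cdot)=v^T+\lambda u_n$ with $\lambda=\int\langle\nabla u_n,\nabla v\rangle\,dx$ and $v^T\in T_{u_n}\cS$, the tangential part is controlled by $\|(J\circ m)'(u_n)\|\to0$, while the normal part produces precisely the factor $r_n^{2-N}(1-\theta_n)$ via $(J\circ m_\cP)'(u_n)(u_n)=r_n^{2-N}-r_n^{-N}\int g(u_n)u_n$ together with $\theta_n=r_n^{-2}\int g(u_n)u_n$ (here $\psi(u_n)=1$). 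Rewriting back in terms of $w_n$ and $\phi$ (using $\int\langle\nabla w_n,\nabla\phi\rangle=r_n^{2-N}\lambda$ and the boundedness of $r_n$) collapses $J'(w_n)(\phi)$ to the displayed expression with an error $O(\|(J\circ m)'(u_n)\|\,\|\phi\|)$. Passing to the limit — using that $\theta_n$ converges along a subsequence (its numerator is bounded by the growth $|g(s)|\le C(|s|+|s|^{2^*-1})$ and $(w_n)$ bounded), that $\tilde w_n\weakto\tu$, and that $\int g(\tilde w_n)\phi\to\int g(\tu)\phi$ by a.e.\ convergence and a Vitali argument exploiting (g2)$'$ — yields $\theta\int\langle\nabla\tu,\nabla\phi\rangle=\int g(\tu)\phi$ for all $\phi$, i.e.\ \eqref{eq:thetaEQ}; testing with $\phi=\tu$ gives $\theta=\psi(\tu)^{-1}\int g(\tu)\tu$.

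Finally the structural claims. For $\theta\neq0$ — which I expect to be the main obstacle — I would argue by contradiction: $\theta=0$ forces $g(\tu)=0$ a.e., but (g1) gives $g(s)\neq0$ for $0<|s|<\delta_0$, so $\tu$ takes values a.e.\ in $\{0\}\cup\{|s|\ge\delta_0\}$; then a suitable Lipschitz truncation $\eta(\tu)\in H^1$ equals the indicator of the finite-measure set $\{|\tu|\ge\delta_0\}$, and an $H^1$ indicator of a set of finite, non-full measure must vanish, contradicting $\tu\neq0$. Identity \eqref{eq:theta} is then the Pohozaev identity $\tfrac{N-2}{2}\theta\,\psi(\tu)=N\int G(\tu)$ for the solution $\tu$ of $-\Delta\tu=\theta^{-1}g(\tu)$, valid after the elliptic regularity quoted in the introduction. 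If $\theta>0$, then \eqref{eq:theta} gives $\int G(\tu)>0$, so $\tu\in\cP$ and $r(\tu)=\sqrt{\theta}$; a rescaling shows $m_\cP(\tu)=\tu(\sqrt{\theta}\,\cdot)$ solves $-\Delta u=g(u)$, hence is a critical point of $J$ lying in $\cM$. If moreover $\theta\ge1$, I would compute, using $J=\tfrac1N\psi$ on $\cM$ and $\psi(\tu(\sqrt{\theta}\,\cdot))=\theta^{(2-N)/2}\psi(\tu)$, that $J(m_\cP(\tu))=\tfrac1N\theta^{(2-N)/2}\psi(\tu)\le\tfrac1N\psi(\tu)$; since weak lower semicontinuity of $\psi$ gives $\psi(\tu)\le\liminf_{n\to\infty}\psi(\tilde w_n)=N\beta$, this yields $J(m_\cP(\tu))\le\beta$.
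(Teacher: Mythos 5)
Your proposal is correct and follows essentially the same route as the paper: the same splitting of test directions into a tangential part (killed by the Palais--Smale condition) and the normal direction $u_n$ (which produces the factor $r_n^{2-N}(1-\theta_n)$), the same identification of $\theta=\psi(\tu)^{-1}\int_{\R^N}g(\tu)\tu\,dx$ followed by the Pohozaev identity, and the same rescaling computations for the cases $\theta>0$ and $\theta\geq 1$. The only cosmetic differences are that you obtain the weak equation and \eqref{eq:theta2} in a single limit passage where the paper runs two separate auxiliary functionals $\xi$ and $\eta$, and that your exclusion of $\theta=0$ uses a Lipschitz truncation of $\tu$ in place of the paper's essential-infimum case distinction with the difference-quotient characterization of $H^1$ --- both arguments rest on the same fact that $\chi_A\notin H^1(\R^N)$ when $0<|A|<\infty$.
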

\begin{proof}
For $v\in X$ we set $v_n(x)=v(r(u_n)^{-1}x-y_n)$ and observe that passing to a subsequence $m_\cU(u_n)(x +y_n)\to\tu(x)$  for a.e. $x\in\R^N$ and by Vitali's convergence theorem
\begin{eqnarray}\nonumber
(J\circ m_\cU)'(u_n)(v_n)
&=& \int_{\R^N}\langle \nabla m_\cU(u_n)(\cdot+y_n),\nabla v\rangle\, dx -
\int_{\R^N}g(m_\cU(u_n)(\cdot+y_n))v\,dx\\\label{eq:conv22q}
&\to& \int_{\R^N}\langle \nabla \tu,\nabla v\rangle\, dx -\int_{\R^N}g(\tu)v\,dx.
\end{eqnarray}
We find the following decomposition
$$v_n=\Big(\int_{\R^N}\langle \nabla u_n,\nabla v_n\rangle\,dx\Big)u_n+\tv_n$$
with $\tv_n\in T_{u_n}\cS$.  In view of Proposition \ref{prop:defMPSU} (iii) we get that $r(u_n)$ is bounded from above, bounded away from $0$ and  passing to a subsequence $r(u_n)\to r_0>0$. Note that $(v_n)$ is bounded, hence $(\tv_n)$ is bounded and $(J\circ m_\cU)'(u_n)(\tv_n)\to 0$. Moreover
\begin{eqnarray*}
\int_{\R^N}\langle \nabla u_n,\nabla v_n\rangle\,dx&=&
r(u_n)^{N-2}
\int_{\R^N}\langle \nabla m_\cU(u_n), \nabla v_n(r(u_n)\cdot)\rangle\,dx\\
&=&r(u_n)^{N-2}\int_{\R^N}\langle \nabla m_\cU(u_n)(\cdot+y_n), \nabla v\rangle\,dx\\
&\to& r_0^{N-2} \int_{\R^N}\langle \nabla \tu,\nabla v\rangle\,dx=0
\end{eqnarray*}
provided that $\int_{\R^N}\langle\nabla \tu,\nabla v \rangle\, dx=0$.
Hence
$$(J\circ m_\cU)'(u_n)(v_n)=\Big(\int_{\R^N}\langle \nabla u_n,\nabla v_n\rangle\,dx\Big)(J\circ m_\cU)'(u_n)(u_n)+(J\circ m_\cU)'(u_n)(\tv_n)\to 0$$
and by \eqref{eq:conv22q} we obtain
$$\int_{\R^N}\langle \nabla \tu,\nabla v\rangle\, dx -\int_{\R^N}g(\tu)v\,dx=0$$
for any $v$ such that $\int_{\R^N}\langle\nabla \tu,\nabla v \rangle\, dx=0$. We define
$\xi:H^1(\R^N)\to \R$ by the following formula
\begin{eqnarray*}
\xi(v)&=&\int_{\R^N}\langle \nabla \tu,\nabla v\rangle\, dx -\int_{\R^N}g(\tu)v\,dx\\
&&-\Big(\int_{\R^N}|\nabla \tu|^2\, dx -\int_{\R^N}g(\tu)\tu\,dx\Big)
\psi(\tu)^{-1}\int_{\R^N}\langle\nabla \tu,\nabla v\rangle\, dx.
\end{eqnarray*}
Observe that any $v\in H^1(\R^N)$ has the following decomposition
$$v=\Big(\int_{\R^N}\langle\nabla \tu,\nabla v\rangle\, dx \Big)\tu+\tv$$
such that $\int_{\R^N}\langle \nabla \tu,\nabla \tv\rangle\,dx=0$. Note that $\xi(\tu)=0$ and
\begin{eqnarray*}
\xi(v)&=&\Big(\int_{\R^N}\langle\nabla \tu,\nabla v\rangle\, dx \Big)\xi(\tu)+\xi(\tv)\\
&=& \xi(\tv)=0
\end{eqnarray*}
for any $v\in H^1(\R^N)$.
Then
\begin{eqnarray}\label{eq:xieq}
0=\xi(v)&=& \int_{\R^N}\Big(1-\Big(\int_{\R^N}|\nabla \tu|^2\, dx -\int_{\R^N}g(\tu)\tu\,dx\Big)
\psi(\tu)^{-1}\Big)\langle\nabla  \tu,  \nabla v\rangle\,dx\\
&&-\int_{\R^N}g(\tu)v\, dx\nonumber
\end{eqnarray}
and $\tu$ is a weak solution to the problem \eqref{eq:thetaEQ}
with 
$$\theta=1-\Big(\int_{\R^N}|\nabla \tu|^2\, dx -\int_{\R^N}g(\tu)\tu\,dx\Big)\psi(\tu)^{-1}
=\psi(\tu)^{-1}\int_{\R^N}g(\tu)\tu\,dx.$$
Now we show \eqref{eq:theta2}. Let us define a map $\eta:\cP\to (H^1(\R^N))^*$ by the following formula
\begin{eqnarray*}
	\eta(u)(v)&=& (J\circ m_\cP)'(u)(v)-(J\circ m_\cP)'(u)(u)\int_{\R^N}\langle \nabla u,\nabla v\rangle\,dx
\end{eqnarray*}
for $u\in\cP$ and $v\in H^1(\R^N)$.
Observe that any $v\in H^1(\R^N)$ has the unique decomposition
$$v=\Big(\int_{\R^N}\langle\nabla u_n,\nabla v\rangle\, dx \Big)u_n+\tv_n$$
such that $\tv_n\in T_{u_n}\cS$. Note that
\begin{eqnarray*}
	\eta(u_n)(v)&=&\Big(\int_{\R^N}\langle\nabla u_n,\nabla v\rangle\, dx \Big)\eta(u_n)(u_n)+\eta(u_n)(\tv_n)\\
	&=& \eta(u_n)(\tv_n)=(J\circ m_\cU)'(u_n)(\tv_n).
\end{eqnarray*}
Since $(u_n)$ is a $(PS)_\beta$-sequence of $J\circ m_\cU$,  we obtain $\eta(u_n)\to 0$ in $(H^1(\R^N))^*$. On the other hand, in view of Proposition \ref{prop:defMPSU} (ii)
\begin{eqnarray*}
	\eta(u_n)(v(r(u_n)^{-1}x-y_n))
	%&=& \int_{\R^N}\big(r(u_n)^{2-N}-(J\circ m_\cP)(u_n)(u_n)\big)
	%	\langle\nabla  u_n,  \nabla v\rangle\,dx-r(u_n)^{-N}\int_{\R^N}g(u_n)v\, dx\\
	&=& \int_{\R^N}\big(1-r(u_n)^{N-2}(J\circ m_\cP)'(u_n)(u_n)\big)
	\langle\nabla  m_\cU(u_n)(\cdot+y_n),  \nabla v\rangle\,dx\\
	&&-\int_{\R^N}g(m_\cU(u_n)(\cdot+y_n))v\, dx\\
	&=& \int_{\R^N}\theta_n
	\langle\nabla  m_\cU(u_n)(\cdot+y_n),  \nabla v\rangle\,dx-\int_{\R^N}g(m_\cU(u_n)(\cdot+y_n))v\, dx,
\end{eqnarray*}
where
$$\theta_n=r(u_n)^{N-2}\int_{\R^N}g(m_\cU(u_n))m_\cU(u_n)\,dx=\psi(m_\cU(u_n))^{-1}\int_{\R^N}g(m_\cU(u_n))m_\cU(u_n)\,dx.$$
Passing to a subsequence $\theta_n\to\tilde{\theta}$ and
\begin{eqnarray*}
	0=\lim_{n\to\infty}\eta(u_n)(v(r(u_n)^{-1}x-y_n))
	&=& \int_{\R^N}\tilde{\theta}
	\langle\nabla  \tu,  \nabla v\rangle\,dx-\int_{\R^N}g(\tu)v\, dx
\end{eqnarray*}
for any $v\in H^1(\R^N)$. Taking into account \eqref{eq:xieq} we obtain that $\tilde{\theta}=\theta$ and
\eqref{eq:theta2} is satisfied.
Now we show that $\theta\neq 0$.  Suppose that $\theta=0$, hence $g(\tu(x))=0$ for a.e. $x\in\R^N$. 
 Take 
$\Sigma:=\{x\in\R^N: g(\tu(x))=0\}$ 
and clearly $\R^N\setminus \Sigma$ has measure zero and let
$\Om:=\{x\in \Sigma: \tu(x)\neq 0\}$.
Suppose that
$\eps:=\mathrm{essinf}_{x\in\Om}|\tu(x)|>0$. Since $\tu\in L^2(\R^N)\setminus\{0\}$, we infer that $\Om$ has finite positive measure 
and note that
$$\int_{\R^N}|\tu(x+h)-\tu(x)|^2\,dx\geq \eps \int_{\R^N}|\chi_\Om(x+h)-\chi_\Om(x)|^2\,dx$$
holds  for any $h\in\R^N$,
where $\chi_\Om$ is the characteristic function of $\Om$. In view of \cite{Ziemer}[Theorem 2.1.6] we infer that $\chi_\Om\in H^1(\R^N)$ and we get the contradiction with the assumption $\eps>0$. Therefore, $\eps=\mathrm{essinf}_{x\in\Om}|\tu(x)|=0$, and we find
 a sequence $(x_n)\subset\R^N$ such that $\tu(x_n)\to 0$, $\tu(x_n)\neq 0$ and $g(\tu(x_n))=0$. Thus, by \eqref{eq:NewCond1} and \eqref{eq:NewCond2} we obtain the next contradiction
$$0=\lim_{n\to\infty}\frac{g(\tu(x_n))}{\tu(x_n)}\leq -m<0.$$
Therefore $\theta\neq 0$ and
by the elliptic regularity we infer that
$\tu\in W^{2,q}_{loc}(\R^N)$ for any $q<\infty$. In view of the Pohozaev identity
$$\theta \int_{\R^N}|\nabla \tu|^2\, dx=2^*\int_{\R^N}G(\tu)\,dx,$$
hence \eqref{eq:theta} holds.
Now suppose that $\theta>0$. Then 
$$r(\tu)=\Big(\frac{2^*\int_{\R^N}G(\tu)\, dx}{\psi(\tu)}\Big)^{1/2}=\Big(\frac{\int_{\R^N}g(\tu)\tu\, dx}{\psi(\tu)}\Big)^{1/2}=\theta^{1/2}.$$
Observe that for $v\in X$ and $v_r=v(r(\tu)^{-1}\cdot)$ one has
\begin{eqnarray*}
J'(m_{\cP}(\tu))(v)&=&r(\tu)^{2-N}\int_{\R^N}\langle \nabla \tu,\nabla v_r\rangle\, dx -r(\tu)^{-N}\int_{\R^N}g(\tu)v_r\,dx\\
&=&r(\tu)^{-N}\Big(\int_{\R^N}\langle \nabla \theta \tu,\nabla v_r\rangle\, dx -\int_{\R^N}g(\tu)v_r\,dx\Big)=0,
\end{eqnarray*}
which finally shows that $m_{\cP}(\tu)$ is a critical point of $J$. If $\theta\geq 1$, then
\begin{eqnarray*}
\beta=\lim_{n\to\infty}J(m_\cU(u_n))&\geq& \Big(\frac{1}{2}-\frac{1}{2^*}\Big)\int_{\R^N}|\nabla \tu|^2\, dx
\geq r(\tu)^{2-N}\Big(\frac{1}{2}-\frac{1}{2^*}\Big)\int_{\R^N}|\nabla \tu|^2\, dx\\
&=&J(m_{\cP}(\tu)).
\end{eqnarray*}
\end{proof}

The main difficulty in the analysis of Palais-Smale sequences of $J\circ m_\cU$ is to find proper translations $(y_n)\subset \R^N$ such that $\theta>0$ in Lemma \ref{lem:theta}. In order to check $(M)_\beta\; (i)$ condition one needs to ensure that even $\theta\geq 1$. This can be performed with the help of 
the following result providing decompositions for Palais-Smale sequences of $J\circ m_\cU$, which is based on the profile decomposition Theorem \ref{ThGerard}. Observe that in the usual variational approach e.g. due to Struwe \cite{StruweSplitting} or  Coti Zelati and Rabinowitz \cite{CotiZelatiRab}, such decompositions of Palais-Smale sequences are finite. In our case, however, a finite procedure cannot be performed in general, since we do not know whether a weak limit point of a Palais-Smale sequence of $J\circ m_\cU$ is a critical point. Therefore we need to employ the profile decompositions from Theorem \ref{ThGerard} and Corollary \ref{CorGerard}. 

\begin{Prop}\label{prop:PSanaysis}
Let $(u_n)\subset \cU$ be a Palais-Smale sequence of $J\circ m_\cU$ at level $\beta=c$.
Then there is $K\in \N\cup \{\infty\}$ and there are sequences
$(\tu_i)_{i=0}^K\subset H^1(\R^N)$, $(\theta_i)_{i=0}^K\subset \R$, for any $n\geq 1$, $(y_n^i)_{i=0}^K\subset \R^N$ is such that $y_n^0=0$,
$|y_n^i-y_n^j|\rightarrow \infty$ as $n\to\infty$ for $i\neq j$, and passing to a subsequence, the following conditions hold:
\begin{eqnarray}\label{EqSplit1}
&& m_\cU(u_n)(\cdot+y_n^i)\weakto \tu_i\; \hbox{ in } H^1(\R^N)\text{ as }n\to\infty\text{ for } 0\leq i <K+1,\\
\label{EqSplit2}
&& 
\tu_i\text{ solves }\eqref{eq:thetaEQ}\text{ with }\theta_i\text{ for } 0\leq i <K+1,\tu_i\neq 0\text{ and }\eqref{eq:theta}\text{ holds  for } 1\leq i <K+1,\\\nonumber
&&\text{if }\tu_0\neq 0,\text{ then }\theta_0\neq 0\text{ and satisfies }\eqref{eq:theta},\\
%&&\nonumber
%\tu_0\text{ solves }\eqref{eq:thetaEQ}\text{ with }\theta_0\neq 0\text{ if } \tu_0\neq 0,\\
\label{EqSplit3}
&& \lim_{n\to\infty}\int_{\R^N}G_1(m_\cU(u_n))\, dx= \sum_{i=0}^K
\int_{\R^N}G_1(\tu_i)\, dx,
\\\label{EqSplit4}
&& \lim_{n\to\infty}\int_{\R^N}G_2(m_\cU(u_n))\, dx\geq \sum_{i=0}^K
\int_{\R^N}G_2(\tu_i)\, dx,\\\label{EqSplit5}
&&\lim_{n\to\infty}\psi(m_\cU(u_n))\geq \sum_{i=0}^K \psi(\tu_i).
\end{eqnarray}
\end{Prop}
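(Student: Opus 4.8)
The plan is to feed the bounded sequence $(m(u_n))$ into the profile decomposition of Theorem \ref{ThGerard}, to identify each nonzero profile as a solution of \eqref{eq:thetaEQ} via Lemma \ref{lem:theta}, and then to read off \eqref{EqSplit3}--\eqref{EqSplit5} by inserting the two auxiliary nonlinearities $G_1$ and $G_2$ (and the Dirichlet integrand) into the abstract splitting identities, being careful that only $G_1$ qualifies for the full concentration-compactness decay.

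First I would verify that $(m(u_n))$ is bounded in $H^1(\R^N)$. Since $(u_n)\subset\cU$ is a Palais--Smale sequence of $J\circ m$ at level $c$, the values $J(m(u_n))=(J\circ m)(u_n)$ converge to $c$ and are in particular bounded; by the coercivity of $J$ on $\cM$ from Proposition \ref{prop:defMPSU}(iii) this forces $(m(u_n))$ to be bounded in $H^1(\R^N)$. I then invoke Theorem \ref{ThGerard} for $(m(u_n))$, which produces profiles $(\tu_i)$, translations $(y_n^i)$ with $y_n^0=0$ and $|y_n^i-y_n^j|\to\infty$ for $i\neq j$, the weak convergences \eqref{EqSplit1}, the gradient splitting \eqref{EqSplit2a}, the functional splitting \eqref{EqSplit3a}, and the decay \eqref{EqSplit4a}. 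Passing to a further subsequence I may assume that the bounded scalar sequences $\int_{\R^N}G_1(m(u_n))\,dx$, $\int_{\R^N}G_2(m(u_n))\,dx$ and $\psi(m(u_n))$ all converge.

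To obtain \eqref{EqSplit2} I note that, by the construction in Theorem \ref{ThGerard}, $\tu_i\neq 0$ for $1\le i<K+1$, whereas $\tu_0$ is the weak limit of $(m(u_n))$ and may vanish. For every index with $\tu_i\neq 0$ the weak convergence $m(u_n)(\cdot+y_n^i)\weakto\tu_i$ places us exactly in the hypotheses of Lemma \ref{lem:theta}, which yields that $\tu_i$ solves \eqref{eq:thetaEQ} with some $\theta_i$, that $\theta_i\neq 0$, and that \eqref{eq:theta} holds; applying this to $i=0$ only when $\tu_0\neq 0$ gives the stated dichotomy for the zeroth profile. Next I read off the three splitting relations. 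For \eqref{EqSplit3} I take $\Psi=G_1$: by \eqref{eq:NewCond1} the even primitive $G_1$ is nonnegative, has the growth $G_1'(s)=g_1(s)\le C(|s|+|s|^{2^*-1})$, and satisfies \eqref{eq:Psi2}, so \eqref{EqSplit3a} gives the exact splitting of $\int_{\R^N}G_1(m(u_n))\,dx$ into $\sum_{j\le i}\int_{\R^N}G_1(\tu_j)\,dx$ plus a remainder for each finite $i$, while \eqref{EqSplit4a} sends that remainder to $0$ as $i\to K$, producing the equality \eqref{EqSplit3}. For \eqref{EqSplit4} I take $\Psi=G_2$, which is again nonnegative with $G_2'(s)=g_2(s)\le C(|s|+|s|^{2^*-1})$, so \eqref{EqSplit3a} still applies; discarding the nonnegative term $\limsup_n\int_{\R^N}G_2(v_n^i)\,dx$ and letting $i\to K$ yields the inequality \eqref{EqSplit4}. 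Finally \eqref{EqSplit5} follows from the gradient splitting \eqref{EqSplit2a} by discarding the nonnegative term $\lim_n\int_{\R^N}|\nabla v_n^i|^2\,dx$ and letting $i\to K$.

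The main obstacle is the asymmetry between the mass-type term $G_2$ and the nonlinear term $G_1$. Because \eqref{eq:NewCond2} forces $G_2(s)\ge\tfrac{m}{2}s^2$, the function $G_2$ fails \eqref{eq:Psi2} at the origin, so the decay \eqref{EqSplit4a} is unavailable for it and one can only retain an inequality in \eqref{EqSplit4}; the same loss affects the Dirichlet term in \eqref{EqSplit5}. Correctly matching $G_1$ to the stronger hypothesis of Theorem \ref{ThGerard} and $G_2$ to the weaker one, and thereby keeping track of which relations are equalities and which are merely inequalities, is the heart of the argument, the surrounding subsequence extractions being routine.
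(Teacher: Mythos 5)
Your proposal is correct and follows essentially the same route as the paper: boundedness of $(m(u_n))$ via coercivity of $J$ on $\cM$, the profile decomposition of Theorem \ref{ThGerard}, Lemma \ref{lem:theta} applied to each nontrivial profile, and then \eqref{EqSplit3}--\eqref{EqSplit5} read off from \eqref{EqSplit3a}, \eqref{EqSplit4a} and \eqref{EqSplit2a}, with exactly the paper's distinction that $G_1$ satisfies \eqref{eq:Psi2} (giving equality) while $G_2$ and the Dirichlet term do not (giving only inequalities).
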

\begin{proof}
Since $J$ is coercive on $\cM$, we know that $m_\cU(u_n)$ is bounded and passing to a subsequence we may assume that $\lim_{n\to\infty}\int_{\R^N}G_1(m_\cU(u_n))\, dx$, $\lim_{n\to\infty}\int_{\R^N}G_2(m_\cU(u_n))\, dx$ exist. In view of Theorem \ref{ThGerard} we obtain  sequences
$(\tu_i)_{i=0}^\infty\subset H^1(\R^N)$ and $(y_n^i)_{i=0}^\infty\subset \R^N$ for  $n\geq 1$, such that \eqref{EqSplit2a}--\eqref{EqSplit3a} are satisfied. If $(\tu_i)_{i=1}^\infty$ contains exactly $K$ nontrivial functions, then we may assume that $\tu_i\neq 0$ for $i=1,...,K$. Otherwise we set $K=\infty$. In view of Lemma \ref{lem:theta}, $\tu_i$ solves \eqref{eq:thetaEQ} with $\theta_i$ for $1\leq i< K+1$. If $\tu_0=0$, then $\theta_0=0$, and if $\tu_0\neq 0$, then $\theta_0$ is given by \eqref{eq:thetaEQ}, so that \eqref{EqSplit1}-\eqref{EqSplit2} hold. Since \eqref{eq:NewCond1} holds, then \eqref{EqSplit3} follows from \eqref{EqSplit3a} and \eqref{EqSplit4a}. Moreover \eqref{EqSplit4} and \eqref{EqSplit5} follow from \eqref{EqSplit3a} and \eqref{EqSplit2a} respectively.
\end{proof}

\begin{Cor}\label{cor:PSanalysis}
	Suppose that $X=H^{1}_{\cO_1}(\R^N)\cap X_\tau$  and $2\leq M< N/2$.
	Let $(u_n)\subset \cU\cap X$ be a Palais-Smale sequence of $J|_X\circ m_\cU|_{\cU\cap X}$ at level $\beta=\inf_{\cM\cap X}J$.
	Then there is $K\in \N\cup \{\infty\}$ and there are sequences
	$(\tu_i)_{i=0}^K\subset X$, $(\theta_i)_{i=0}^K\subset \R$, $(y_n^i)_{i=0}^K\subset  \{0\}\times\{0\}\times\R^{N-2M}$ for any $n\geq 1$ such that $y_n^0=0$,
	$|y_n^i-y_n^j|\rightarrow \infty$ as $n\to\infty$ for $i\neq j$, and passing to a subsequence, \eqref{EqSplit1}--\eqref{EqSplit5} are satisfied. 
\end{Cor}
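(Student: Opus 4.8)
The plan is to repeat the proof of Proposition \ref{prop:PSanaysis} almost verbatim, replacing the unrestricted profile decomposition of Theorem \ref{ThGerard} by its symmetric variant Corollary \ref{CorGerard}, which (since $2\le m<N/2$) produces profiles in $H^1_{\cO_1}(\R^N)$ together with translations $(y_n^i)\subset\{0\}\times\{0\}\times\R^{N-2m}$. Two points must be checked that were automatic in the unrestricted case: that the given \emph{restricted} Palais--Smale sequence is genuinely a Palais--Smale sequence of the unrestricted $J\circ m$, so that Lemma \ref{lem:theta} applies to each profile; and that every profile inherits the full symmetry of $X=H^1_{\cO_1}(\R^N)\cap X_\tau$.

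First I would settle the symmetric-criticality reduction. The functional $J$ is invariant under $\cO_1$ and under the isometric involution $u\mapsto -u(\tau\cdot)$ (here one uses that $G$ is even and that $\tau$ and $\cO_1$ preserve Lebesgue measure), and the sets $\cM,\cS,\cP,\cU$ together with $m_\cP$ are equivariant. Hence $X$ is the fixed-point subspace of the (compact) group generated by these symmetries and, since $u_n\in\cU\cap X$, equivariance of the gradient forces the $\cS$-gradient of the invariant functional $J\circ m$ at $u_n$ to lie again in $X$. As $T_{u_n}(\cS\cap X)=(T_{u_n}\cS)\cap X$ is exactly the fixed part of $T_{u_n}\cS$, the norm of $(J\circ m)'(u_n)$ over $T_{u_n}\cS$ coincides with its norm over $T_{u_n}(\cS\cap X)$. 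Consequently $(u_n)$ is an unrestricted $(PS)_\beta$-sequence of $J\circ m$, and Lemma \ref{lem:theta} may be invoked.

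Next I would run the decomposition. By Proposition \ref{prop:defMPSU}(iii) the coercivity of $J$ on $\cM$ forces $m(u_n)$ to be bounded, and $m(u_n)\in X\subset H^1_{\cO_1}(\R^N)$ because $m$ only rescales. Applying Corollary \ref{CorGerard} to $(m(u_n))$ and passing to a subsequence yields $K\in\N\cup\{\infty\}$, profiles $(\tu_i)_{i=0}^K\subset H^1_{\cO_1}(\R^N)$ with $\tu_i\neq0$ for $i\ge1$, and translations $y_n^0=0$, $y_n^i=(0,0,z_n^i)$ with $|y_n^i-y_n^j|\to\infty$, satisfying \eqref{EqSplit2a}, \eqref{EqSplit3a} and \eqref{EqSplit4a}. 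Feeding each weak convergence $m(u_n)(\cdot+y_n^i)\weakto\tu_i$ into Lemma \ref{lem:theta} gives \eqref{EqSplit1} and \eqref{EqSplit2}. Then \eqref{EqSplit3} follows from \eqref{EqSplit3a}--\eqref{EqSplit4a} applied to $\Psi=G_1$, which satisfies \eqref{eq:Psi2} by \eqref{eq:NewCond1}; \eqref{EqSplit4} follows from \eqref{EqSplit3a} applied to $\Psi=G_2$, yielding only an inequality since $G_2$ need not verify \eqref{eq:Psi2}; and \eqref{EqSplit5} follows from \eqref{EqSplit2a}.

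It remains to verify $\tu_i\in X$, the genuinely new ingredient. The $\cO_1$-invariance is provided by Corollary \ref{CorGerard}, so I only need $\tu_i\in X_\tau$. Because $y_n^i$ lies in the third factor while $\tau$ swaps the first two and fixes the third, one has $\tau x+y_n^i=\tau(x+y_n^i)$; since $m(u_n)\in X_\tau$ this gives $m(u_n)(\tau x+y_n^i)=-m(u_n)(x+y_n^i)$, so every translate $m(u_n)(\cdot+y_n^i)$ lies in the closed, hence weakly closed, subspace $X_\tau$. Passing to the weak limit yields $\tu_i\in X_\tau$, whence $\tu_i\in X$. The main obstacle is the symmetric-criticality step: without the invariance of the $\cS$-gradient, a restricted Palais--Smale sequence need not be one for the unrestricted $J\circ m$, and Lemma \ref{lem:theta}---the source of the $\theta$-equation \eqref{eq:thetaEQ} and of \eqref{eq:theta}---could not be applied. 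Note also that the commutation $\tau x+y_n^i=\tau(x+y_n^i)$ keeping the profiles in $X_\tau$ is precisely what forces the admissible translations into the $\R^{N-2m}$-direction.
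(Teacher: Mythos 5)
Your proof is correct and follows essentially the same route as the paper: Corollary \ref{CorGerard} replaces Theorem \ref{ThGerard}, and the $\theta$-analysis of Lemma \ref{lem:theta} supplies \eqref{EqSplit1}--\eqref{EqSplit2}, with \eqref{EqSplit3}--\eqref{EqSplit5} read off from \eqref{EqSplit2a}--\eqref{EqSplit4a} exactly as in Proposition \ref{prop:PSanaysis}. The one structural difference is where symmetric criticality enters: the paper reruns the argument of Lemma \ref{lem:theta} with test functions restricted to $X$, obtaining that each $\tu_i$ solves \eqref{eq:thetaEQ} weakly against $v\in X$, and only then invokes the Palais principle on each profile to upgrade it to a genuine weak solution; you instead apply the gradient-invariance argument once, up front, to conclude that the restricted $(PS)_\beta$-sequence is an unrestricted one (legitimate here, since the group generated by $\cO_1$ and $u\mapsto -u(\tau\cdot)$ acts by linear isometries leaving $J$, $\psi$, $\cU$ and $m_\cP$ invariant, with fixed subspace exactly $X$), after which Lemma \ref{lem:theta} applies verbatim. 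Both are valid, and your explicit check that the profiles stay in $X_\tau$ (translations along $\{0\}\times\{0\}\times\R^{N-2m}$ commute with $\tau$, and $X_\tau$ is weakly closed) supplies a detail the paper leaves implicit.
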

\begin{proof} We argue as in proof of Proposition \ref{prop:PSanaysis}, but instead of Theorem \ref{ThGerard} we use Corollary \ref{CorGerard}.
Arguing as in Lemma \ref{lem:theta} we obtain that $\tu_i$ solves \eqref{eq:thetaEQ} with $\theta_i$ in $X$ for $0\leq i <K+1$, i.e.
$$\theta_i\int_{\R^N}\langle\nabla \tu_i, \nabla v\rangle\,dx=\int_{\R^N} g(\tu_i)v\,dx\quad\hbox{for every }v\in X,$$
and $\tu_i\neq 0$ for $1\leq i <K+1$. By the Palais principle of symmetric criticality \cite{Palais}, $\tu_i$ solves \eqref{eq:thetaEQ} with $\theta_i$. As in Lemma \ref{lem:theta} we show that $\theta_i\neq 0$ and by the Pohozaev identity \eqref{eq:theta} holds for $\tu_i$ and $\theta_i$ for $1\leq i <K+1$. If $\tu_0=0$, then $\theta_0=0$, otherwise $\theta_0$ is given by \eqref{eq:thetaEQ} and that the Pohozaev identity satisfies also \eqref{eq:theta}.
\end{proof}

\begin{Rem}\label{Rem:theta}
An important consequence of Proposition \ref{prop:PSanaysis} and Corollary \ref{cor:PSanalysis} is the existence of a sequence of translations $(y^i_n)$ such that $\theta_i\geq 1$ for some $i\geq 0$. Indeed, in view of \eqref{EqSplit2} we get
$$\theta_i\psi(\tu_i)=2^*\Big(\int_{\R^N}G_1(\tu_i)\, dx-\int_{\R^N}G_2(\tu_i)\, dx\Big)$$
for $0\leq i < K+1$. Then by \eqref{EqSplit3}--\eqref{EqSplit5} we obtain
\begin{eqnarray*}
\sum_{i=0}^K\theta_i\psi(\tu_i)&=&2^*\Big(\sum_{i=0}^K\int_{\R^N}G_1(\tu_i)\, dx-\sum_{i=0}^K\int_{\R^N}G_2(\tu_i)\, dx\Big)\\
&\geq& 2^*\Big(\lim_{n\to\infty}\int_{\R^N}G_1(m_\cU(u_n))\, dx - \lim_{n\to\infty}\int_{\R^N}G_2(m_\cU(u_n))\, dx\Big)\\
&=& \lim_{n\to\infty}\psi(m_\cU(u_n))\geq \sum_{i=0}^K \psi(\tu_i).
\end{eqnarray*}
Therefore there is $\theta_i\geq 1$ for some $0\leq i<K+1$.
\end{Rem}

\subsection{Proof of Theorems \ref{ThMain1}, \ref{ThMain2} and \ref{ThMain3}}
\begin{Lem}\label{lem:Mcond}
$J\circ m_\cU$ satisfies $(M_\beta)\; (i)$ for $\beta=c$. 
\end{Lem}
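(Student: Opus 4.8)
The plan is to feed an arbitrary $(PS)_c$-sequence into the profile decomposition of Proposition~\ref{prop:PSanaysis}, select the single profile whose scaling parameter $\theta_i$ is at least $1$ (its existence being guaranteed by Remark~\ref{Rem:theta}), translate along the corresponding sequence, and then read off all three requirements of $(M)_\beta\,(i)$ directly from Lemma~\ref{lem:theta}. Here $G=\R^N$ acts by translations and $\beta=c=\inf_\cM J$.

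More precisely, let $(u_n)\subset\cU$ be a $(PS)_c$-sequence of $J\circ m$. Since $J$ is coercive on $\cM$ by Proposition~\ref{prop:defMPSU}(iii), the sequence $(m(u_n))$ is bounded, so Proposition~\ref{prop:PSanaysis} applies and yields $K\in\N\cup\{\infty\}$, profiles $(\tu_i)_{i=0}^K$, numbers $(\theta_i)_{i=0}^K$ and translations $(y_n^i)_{i=0}^K\subset\R^N$ satisfying \eqref{EqSplit1}--\eqref{EqSplit5}, where each nontrivial $\tu_i$ solves \eqref{eq:thetaEQ} and obeys the Pohozaev relation \eqref{eq:theta}. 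I would then invoke Remark~\ref{Rem:theta}, which combines the splitting identities \eqref{EqSplit3}--\eqref{EqSplit5} with \eqref{eq:theta} to produce an index $i_0$, $0\le i_0<K+1$, with $\theta_{i_0}\ge 1$.

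Taking $g_n$ to be the translation by $y_n^{i_0}$ and setting $u:=\tu_{i_0}$, relation \eqref{EqSplit1} gives $g_n m(u_n)\weakto u$ along a subsequence. Because $\theta_{i_0}\ge 1>0$, the profile $u$ is nonzero and \eqref{eq:theta} reads $2^*\int_{\R^N}G(u)\,dx=\theta_{i_0}\psi(u)>0$, so $u\in\cP$. Finally Lemma~\ref{lem:theta} closes the argument: $\theta_{i_0}>0$ yields that $m_{\cP}(u)\in\cM$ is a critical point of $J$, i.e. $J'(m_{\cP}(u))=0$, while $\theta_{i_0}\ge1$ yields $J(m_{\cP}(u))\le c=\beta$. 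These are precisely the three conditions in $(M)_\beta\,(i)$.

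The genuine difficulty does not lie in this lemma but in the two inputs it consumes. The delicate point, as emphasized in the introduction, is that a $(PS)$-sequence of the constrained functional $J\circ m$ need not map to a $(PS)$-sequence of the unconstrained $J$ on $\cM$, so one cannot simply extract a weakly convergent subsequence and pass to the limit. The profile decomposition circumvents this, and the crucial quantitative step—already carried out in Remark~\ref{Rem:theta}—is that the subadditivity of $\psi$ in \eqref{EqSplit5} and of the $G_2$-mass in \eqref{EqSplit4}, combined with the exact splitting of the $G_1$-part in \eqref{EqSplit3}, force at least one $\theta_{i_0}\ge1$. This is exactly the selection of the \emph{proper translation} on which the whole method hinges.
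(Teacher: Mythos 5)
Your proof is correct and follows essentially the same route as the paper: the paper's own argument is precisely to combine Proposition \ref{prop:PSanaysis} with Remark \ref{Rem:theta} to select a translation sequence for which the weak limit $\tu\neq 0$ has $\theta\geq 1$, and then to conclude via Lemma \ref{lem:theta}. Your additional remarks on why $\tu_{i_0}\in\cP$ and where the real difficulty lies accurately reflect the structure of the paper's argument.
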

\begin{proof}
	Let $(u_n)\subset \cU$ be a $(PS)_\beta$-sequence of $J\circ m_\cU$.  Since $J$ is coercive on $\cM$, $(m_\cU(u_n))$ is bounded and  in view of Proposition \ref{prop:PSanaysis} and Remark \ref{Rem:theta} we find a sequence $(y_n)\subset \R^N$ such that $m_\cU(u_n)(\cdot+y_n)\weakto \tu$ in $H^1(\R^N)$ for some $\tu\neq 0$ and $\theta\geq 1$ given by \eqref{eq:theta}. Observe that $\tu\in\cP$ and by Lemma \ref{lem:theta} we conclude.
\end{proof}

Now, let us consider $\cO_1$-invariant functions.

\begin{Lem}\label{lem:Mcond2}
Suppose that $X=H^{1}_{\cO_1}(\R^N)\cap X_\tau$ and $2\leq M< N/2$. Then $J|_X\circ m_\cU|_{\U\cap X}$ satisfies  $(M_\beta)\; (i)$ for $\beta=\inf_{\cM\cap X}J$.
\end{Lem}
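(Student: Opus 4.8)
The plan is to repeat the proof of Lemma \ref{lem:Mcond} inside the symmetric subspace $X=H^1_{\cO_1}(\R^N)\cap X_\tau$, with the translation group $G=\{0\}\times\{0\}\times\R^{N-2m}$ in place of the full group $\R^N$, and with Corollary \ref{cor:PSanalysis} replacing Proposition \ref{prop:PSanaysis}. First I would fix an arbitrary $(PS)_\beta$-sequence $(u_n)\subset\cU\cap X$ of $J|_X\circ m|_{\cU\cap X}$. Since $J$ is coercive on $\cM$ by Proposition \ref{prop:defMPSU}(iii), the sequence $(m(u_n))$ is bounded in $H^1(\R^N)$; moreover it lies in $\cM\cap X$, because the radial scaling defining $m_{\cP}$ commutes with the $\cO_1$-action and with $\tau$, so that $m_{\cP}$ maps $\cP\cap X$ into $\cM\cap X$.

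Next I would invoke Corollary \ref{cor:PSanalysis}, which is exactly tailored to this situation through the modified Lions lemma Corollary \ref{CorLions1}. It provides profiles $(\tu_i)_{i=0}^K\subset X$, numbers $(\theta_i)_{i=0}^K$, and translations $(y_n^i)\subset\{0\}\times\{0\}\times\R^{N-2m}$ with $m(u_n)(\cdot+y_n^i)\weakto\tu_i$, such that each nontrivial $\tu_i$ solves \eqref{eq:thetaEQ} with $\theta_i$ and the splitting relations \eqref{EqSplit1}--\eqref{EqSplit5} hold. The essential feature, already built into Corollary \ref{cor:PSanalysis}, is that the escaping mass of an $\cO_1$-invariant sequence can only concentrate along the $\R^{N-2m}$ axis; hence every $y_n^i$ may be chosen in $\{0\}\times\{0\}\times\R^{N-2m}$, and each weak limit $\tu_i$ retains the symmetry of $X$. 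By the Palais principle of symmetric criticality \cite{Palais} the profiles are genuine full-space solutions of \eqref{eq:thetaEQ}.

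Then I would apply Remark \ref{Rem:theta} verbatim: inserting $\theta_i\psi(\tu_i)=2^*\big(\int_{\R^N}G_1(\tu_i)\,dx-\int_{\R^N}G_2(\tu_i)\,dx\big)$ into \eqref{EqSplit3}--\eqref{EqSplit5} yields $\sum_i\theta_i\psi(\tu_i)\geq\sum_i\psi(\tu_i)$, so that $\theta_{i_0}\geq 1$ for some index $0\leq i_0<K+1$. For this index $\tu_{i_0}\neq 0$, hence $\psi(\tu_{i_0})>0$, and \eqref{eq:theta} gives $\int_{\R^N}G(\tu_{i_0})\,dx=\theta_{i_0}\psi(\tu_{i_0})/2^*>0$, so that $\tu_{i_0}\in\cP\cap X$. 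Taking $g_n:=y_n^{i_0}\in G$ and $u:=\tu_{i_0}$, we have $g_n m(u_n)\weakto u\in\cP$ along a subsequence, and since $\theta_{i_0}\geq 1>0$ the last two assertions of Lemma \ref{lem:theta} show that $m_{\cP}(u)\in\cM\cap X$ is a critical point of $J$ with $J(m_{\cP}(u))\leq\beta$. This is precisely what $(M)_\beta(i)$ requires.

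The only point that differs from Lemma \ref{lem:Mcond} is that the admissible translations are confined to the $\R^{N-2m}$ direction and that the weak limits must remain in $X$. Both issues are resolved upstream in Corollary \ref{cor:PSanalysis} (and ultimately in Corollary \ref{CorLions1}), so the argument here is a direct transcription; the main obstacle, namely controlling the direction of concentration under the $\cO_1$-symmetry, has effectively already been overcome. I would therefore keep the write-up short, signalling explicitly where each ingredient enters.
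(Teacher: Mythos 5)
Your proposal is correct and follows essentially the same route as the paper: the paper's proof of Lemma \ref{lem:Mcond2} is precisely a transcription of Lemma \ref{lem:Mcond} using Corollary \ref{cor:PSanalysis} (with translations confined to $\{0\}\times\{0\}\times\R^{N-2m}$), Remark \ref{Rem:theta} to extract a profile with $\theta\geq 1$, and Lemma \ref{lem:theta} to conclude. Your additional remarks (selecting $i_0$ with $\psi(\tu_{i_0})>0$, verifying $\tu_{i_0}\in\cP$ via \eqref{eq:theta}, and invoking the Palais principle) only make explicit what the paper leaves implicit.
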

\begin{proof}
Let $(u_n)\subset \cU\cap X$ be a $(PS)_\beta$-sequence of $J|_X\circ m_\cU|_{\U\cap X}$.  Similarly as in  proof of Lemma \ref{lem:Mcond}, in view of Corollary \ref{cor:PSanalysis} and and Remark \ref{Rem:theta} we find a sequence $(y_n)\subset \{0\}\times\{0\}\times\R^{N-2M}$ such that $m_\cU(u_n)(\cdot+y_n)\weakto \tu$ in $X$ for some $\tu\neq 0$ and $\theta\geq 1$ given by \eqref{eq:theta}. Observe that $\tu\in\cP\cap X$ and as in Lemma \ref{lem:theta}, $J(m_\cP(\tu))\leq \beta$.

\end{proof}

More can be said for $\cO_2$-invariant functions.
\begin{Lem}\label{lem:Mcond3}
	Suppose that $X=H^{1}_{\cO_2}(\R^N)\cap X_\tau$. If $(u_n)\subset\cU\cap X$ is a $(PS)_\beta$-sequence of $(J|_X\circ m_\cU|_{\U\cap X})$, then passing to a subsequence $u_n\to u_0$ for some $u_0\in\cU\cap X$ such that $J|'_X(m_\cU(u_0))=0$.
\end{Lem}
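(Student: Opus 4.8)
The plan is to exploit the compactness of the embedding $H^1_{\cO_2}(\R^N)\hookrightarrow L^p(\R^N)$ for $2<p<2^*$ (valid since $N-2m\neq 1$, cf. \cite{Lions82}), which already excludes translated profiles: in Corollary \ref{cor:LionsO2} only the single profile $\tu_0$ (the weak limit taken at $y_n=0$) survives. First I would record that $J$ is coercive on $\cM$ by Proposition \ref{prop:defMPSU}(iii), so $m(u_n)$ is bounded and, after passing to a subsequence, $r(u_n)\to r_0>0$ and $m(u_n)\weakto\tu_0$ in $H^1(\R^N)$ with $m(u_n)\to\tu_0$ in $L^p(\R^N)$ for $2<p<2^*$. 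I would then check $\tu_0\neq 0$: otherwise Corollary \ref{cor:LionsO2} forces $\int_{\R^N}G_1(m(u_n))\,dx\to 0$, and since $G\leq G_1$ and $m(u_n)\in\cM$ this gives $\psi(m(u_n))=2^*\int_{\R^N}G(m(u_n))\,dx\to 0$, whence $J(m(u_n))\to 0$, contradicting $\beta>0$. With $\tu_0\neq 0$, applying Lemma \ref{lem:theta} with $y_n=0$ shows that $\tu_0$ solves \eqref{eq:thetaEQ} with some $\theta_0\neq 0$ satisfying \eqref{eq:theta}; feeding the single-profile splitting of Corollary \ref{cor:LionsO2} into the constraint $M(m(u_n))=0$ exactly as in Remark \ref{Rem:theta} yields
$$\theta_0\psi(\tu_0)=\psi(\tu_0)+b+2^*d\geq\psi(\tu_0),$$
where $b:=\lim_n\psi(m(u_n))-\psi(\tu_0)\geq 0$ and $d:=\lim_n\int_{\R^N}G_2(m(u_n))\,dx-\int_{\R^N}G_2(\tu_0)\,dx\geq 0$. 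In particular $\theta_0\geq 1$, so $\theta_0>0$.

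Next I would upgrade weak to strong convergence. Proceeding as in the proof of Lemma \ref{lem:theta} (with $y_n=0$), the hypothesis $(J\circ m)'(u_n)\to 0$ yields the asymptotic identity
$$\theta_n\int_{\R^N}\langle\nabla m(u_n),\nabla v\rangle\,dx-\int_{\R^N}g(m(u_n))v\,dx\to 0$$
uniformly for $v$ bounded in $X$, with $\theta_n\to\theta_0$. Testing with $v=m(u_n)-\tu_0$, the gradient term tends to $\theta_0 b$. For the nonlinear term I would split $g=g_1-g_2$: since $|g_1(s)s|$ obeys \eqref{eq:Psi2}, \eqref{EqPsiLast} gives $\int_{\R^N}g_1(m(u_n))m(u_n)\,dx\to\int_{\R^N}g_1(\tu_0)\tu_0\,dx$, and the $L^p$-convergence handles $\int_{\R^N}g_1(m(u_n))\tu_0\,dx$, so the $g_1$-contribution to $\int_{\R^N}g(m(u_n))(m(u_n)-\tu_0)\,dx$ vanishes in the limit. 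The $g_2$-part is the delicate one: writing $f:=\lim_n\int_{\R^N}g_2(m(u_n))m(u_n)\,dx-\int_{\R^N}g_2(\tu_0)\tu_0\,dx$, one has $f\geq 0$ by Fatou (as $g_2(s)s\geq m s^2\geq 0$), while $\int_{\R^N}g_2(m(u_n))\tu_0\,dx\to\int_{\R^N}g_2(\tu_0)\tu_0\,dx$ because the linear growth $|g_2(s)|\leq C|s|$ makes $g_2(m(u_n))$ bounded in $L^2(\R^N)$, hence weakly convergent to $g_2(\tu_0)$. Passing to the limit in the tested identity leaves $\theta_0 b+f=0$, and since $\theta_0>0$ and $b,f\geq 0$ I conclude $b=0$ and $f=0$.

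Finally, $b=0$ means $\psi(m(u_n))\to\psi(\tu_0)$, so together with weak convergence $m(u_n)\to\tu_0$ strongly in the gradient norm; from $f=0$ and $g_2(m(u_n))m(u_n)-m\,|m(u_n)|^2\geq 0$ a further Fatou argument gives $\limsup_n\int_{\R^N}|m(u_n)|^2\,dx\leq\int_{\R^N}|\tu_0|^2\,dx$, which with weak lower semicontinuity forces $L^2$-convergence; hence $m(u_n)\to\tu_0$ in $H^1(\R^N)$. Strong convergence places $\tu_0$ in the closed set $\cM$, so $M(\tu_0)=0$ and therefore $\theta_0=1$, i.e. $\tu_0$ solves $-\Delta\tu_0=g(\tu_0)$; setting $u_0:=m^{-1}(\tu_0)\in\cU\cap X$ and using $r(u_n)\to r_0$ one gets $u_n\to u_0$, and $J|'_X(m(u_0))=0$ by the principle of symmetric criticality. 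The main obstacle is precisely the quadratic mass term: the compact embedding is available only for $2<p<2^*$, so the $L^2$-defect carried by $g_2$ does not disappear for free, and it is the sign information $\theta_0>0$ (from $\theta_0\geq 1$) combined with the Fatou/weak-$L^2$ analysis that simultaneously eliminates the gradient defect $b$ and the mass defect $f$.
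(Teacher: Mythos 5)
Your argument is correct and follows essentially the same route as the paper: compactness from Corollary \ref{cor:LionsO2}, nontriviality of the weak limit, $\theta_0\geq 1$ from the Pohozaev constraint $M(m(u_n))=0$ together with the $G_1$/$G_2$ splitting, and elimination of the gradient and mass defects to upgrade to strong convergence --- your identity $\theta_0 b+f=0$ is exactly the relation the paper extracts from \eqref{eq:theta2} by comparing $\lim_{n}\psi(m(u_n))^{-1}\int_{\R^N} g(m(u_n))m(u_n)\,dx$ with $\psi(\tu)^{-1}\int_{\R^N} g(\tu)\tu\,dx$. The only cosmetic point is that invoking Lemma \ref{lem:theta} inside the subspace $X$ already requires the Palais principle of symmetric criticality at that stage (as spelled out in Corollary \ref{cor:PSanalysis}), not only at the very end.
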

\begin{proof}
Let $(u_n)\subset \cU\cap X$ be a sequence such that $(J|_X\circ m_\cU|_{\U\cap X})'(u_n)\to 0$ and $(J|_X\circ m_\cU|_{\U\cap X})(u_n)\to\beta$.  Since $J$ is coercive on $\cM$, $(m_\cU(u_n))$ is bounded and  in view of Corollary \ref{cor:LionsO2} we find $\tu\in X$ such that 
\begin{equation}\label{eq:G_1conv}
\int_{\R^N}G_1(m_\cU(u_n))\, dx\to \int_{\R^N}G_1(\tu)\, dx
\end{equation}
as $n\to\infty$. If $\tu=0$, then by \eqref{eq:NewCond2}
$$\min\big\{1,\frac{m}{2}\big\}\|m_\cU(u_n)\|^2\leq\int_{\R^N}|\nabla m_\cU(u_n)|^2\,dx+2^*\int_{\R^N}G_2(m_\cU(u_n))\,dx=2^*\int_{\R^N}G_1(m_\cU(u_n))\,dx\to 0,$$
which contradicts the fact that $\inf_{\cM\cap X}J>0$. Therefore $\tu\neq 0$.
Now, observe that applying Corollary \ref{cor:LionsO2}  with \eqref{EqPsiLast} for $\Psi(s)=G_1(s)$ and passing to subsequence we obtain 
\begin{eqnarray*}
	\lim_{n\to\infty}\int_{\R^N} g_1(m_\cU(u_n))m_\cU(u_n)\,dx&=&\int_{\R^N} g_1(\tu)\tu\,dx,\\
	\lim_{n\to\infty}\int_{\R^N} g_2(m_\cU(u_n))m_\cU(u_n)\,dx&\geq&\int_{\R^N} g_2(\tu)\tu\,dx.
\end{eqnarray*}
Similarly as in Lemma \ref{lem:theta} we infer that \eqref{eq:theta2}, \eqref{eq:theta} hold and 
\begin{eqnarray*}
	\psi(\tu)&\leq&\lim_{n\to\infty}\psi(m_\cU(u_n))=\lim_{n\to\infty}\int_{\R^N}g_1(m_\cU(u_n))m_\cU(u_n)\,dx-\lim_{n\to\infty}\int_{\R^N}g_2(m_\cU(u_n))m_\cU(u_n)\,dx\\
	&\leq&\int_{\R^N}g_1(\tu)\tu\,dx-\int_{\R^N}g_2(\tu)\tu\,dx= \theta \psi(\tu).
\end{eqnarray*}	
Hence $\theta\geq 1$ and $m_{\cP}(u)\in \cM\cap X$ is a critical point of $J|_X\circ m_\cU|_{\U\cap X}$.
 In view of \eqref{eq:theta2} we get
 $$\theta=\lim_{n\to\infty}\psi(m_\cU(u_n))^{-1}\int_{\R^N} g(m_\cU(u_n))m_\cU(u_n)\,dx\leq 
 \psi(\tu)^{-1}\int_{\R^N} g(\tu)\tu\,dx=\theta,$$
hence 
 $$\lim_{n\to\infty}\psi(m_\cU(u_n))=\psi(\tu)$$
 and 
 $$\lim_{n\to\infty}\int_{\R^N} g_2(m_\cU(u_n))m_\cU(u_n)\,dx=\int_{\R^N} g_2(\tu)\tu\,dx.$$
 Note that $g_2(s)=ms+g_3(s)$, where $g_3(s):=\max\{0,-g(s)-ms\}\geq 0$ for $s\geq 0$ and $g_3(s):=-\max\{0,-g(s)-ms\}$ for $s\leq 0$. Then $g_3(s)s\geq 0$ for $s\in\R$ and we easy infer that
 $$\lim_{n\to\infty}\int_{\R^N}  m|m_\cU(u_n)|^2\,dx=\int_{\R^N} m|\tu|^2\,dx.$$
 Therefore $m_\cU(u_n)\to \tu$ in $H^1(\R^N)$, $\tu\in\cM\cap X$ and $u_n\to u_0:=m^{-1}(\tu)$ in $\cU\cap X$. Since $\theta=1$, $J|'_X(m_\cU(u_0))=J|'_X(\tu)=0$.
\end{proof}

\begin{altproof}{Theorem \ref{ThMain1}}
Since $J$  satisfies (A1)--(A3) and $(M_\beta)\;(i)$, proof follows from Theorem \ref{Th:CrticMulti} (a).
\end{altproof}

\begin{altproof}{Theorem \ref{ThMain2}}
If $X=H^{1}_{\cO_1}(\R^N)\cap X_\tau$ and $2\leq m< N/2$, then $J|_X\circ m_\cU|_{\U\cap X}$ satisfies
 (A1)--(A3) and $(M_\beta)\;(i)$. Then, in view of Theorem \ref{Th:CrticMulti} (a) there is a critical point $u\in \cM\cap X$ of $J|_X$ such that 
 $$J(u)=\inf_{\cM\cap X}J.$$
 In view of the Palais principle of symmetric criticality \cite{Palais}, $u$ solves \eqref{eq}.
Let
\begin{eqnarray*}
\Omega_1&:=&\{x\in\R^N: |x_1|>|x_2|\},\\
\Omega_2&:=&\{x\in\R^N: |x_1|<|x_2|\}.\\
\end{eqnarray*}
Since $u\in X_\tau\cap H^1_{\cO_1}(\R^N)$, we get $\chi_{\Omega_1} u\in H^1_0(\Omega_1)\subset H^1(\R^N)$ and 
 $\chi_{\Omega_2} u\in H^1_0(\Omega_2)\subset H^1(\R^N)$. Moreover $\chi_{\Omega_1} u\in \cM$ and
$$J(u)=J(\chi_{\Omega_1} u)+J(\chi_{\Omega_2} u)=2J(\chi_{\Omega_1} u)\geq 2 \inf_{\cM} J,$$
which completes the proof of \eqref{eq:thmain2}. The remaining case $2\leq m=N/2$ is contained in Theorem \ref{ThMain3}.
\end{altproof}

\begin{altproof}{Theorem \ref{ThMain3}}
If $X=H^{1}_{\cO_2}(\R^N)\cap X_\tau$, then $J|_X\circ m_\cU|_{\U\cap X}$ satisfies
(A1)--(A3), and note that Lemma \ref{lem:Mcond3} holds.  In view of \cite{BerLionsII}[Theorem 10], for any $k\geq 1$ we find an odd continuous map $\tau:S^{k-1}\to H_0^1(B(0,R))\cap L^{\infty}(B(0,R))$ such that $\tau(\sigma)$ is a radial function and $\tau(\sigma)\neq 0$ for all $\sigma\in S^{k-1}$. Moreover 
$$\int_{B(0,R)}G(\tau(\sigma))\,dx\geq c_2R^N-c_3R^{N-1}$$
for any $\sigma\in S^{k-1}$ and some constants $c_2,c_3>0$. As in Remark \ref{remTau} we define a map $\tilde{\tau}:S^{k-1}\to H_0^1(B(0,R))\cap L^{\infty}(B(0,R))$ such that $\tilde{\tau}(\sigma)(x_1,x_2,x_3)=\tau(\sigma)(x_1,x_2,x_3)\vp(|x_1|-|x_2|)$. Observe that $\tilde{\tau}(\sigma)\in X$ and
$\int_{B(0,R)}G(\tilde{\tau}(\sigma))\,dx>0$ for $\sigma\in S^{k-1}$ and sufficiently large $R$. Therefore (S) is satisfied and
 proof follows from Theorem \ref{Th:CrticMulti} (c) and from the Palais principle of symmetric criticality \cite{Palais}.
\end{altproof}

{\bf Acknowledgements.}
I would like to thank Louis Jeanjean and Sheng-Sen Lu, who observed that one has to ensure that the Lusternik-Schnirelmann values are finite in proof of Theorem 2.2. Inspired by this work, very recently they also recovered the results of this paper in  \cite{LuJeanjean} by means of the symmetric mountain pass setting and the monotonicity trick.  Moreover I am  grateful to  Rupert Frank for pointing out the reference \cite{Nawa} and  I am also indebted to the referees for many valuable comments.\\
\indent The author was partially supported by the National Science Centre, Poland (Grant No. 2014/15/D/ST1/03638).


\begin{thebibliography}{99}
\baselineskip 2 mm

\bibitem{AR} A. Ambrosetti, P.H. Rabinowitz, {\em Dual variational methods in critical point theory and applications}, 
J. Funct.  Anal. {\bf 14} (1973), 349--381. 

\bibitem{AoWei} W. Ao, M. Musso, F. Pacard, J. Wei: {\em Solutions without any symmetry for semilinear elliptic problems}, J. Funct. Anal. {\bf 270} (2016), no. 3, 884--956. 



\bibitem{BartschWillem} T. Bartsch, M. Willem: {\em Infinitely many nonradial solutions of a Euclidean scalar field equation}, J. Funct. Anal. {\bf 117} (1993), 447--460.

\bibitem{BenciRabinowitz} V. Benci, P. H. Rabinowitz: {\em Critical point theorems for indefinite functionals}, Invent. Math. {\bf 52} (1979), no. 3, 241--273.


\bibitem{BerLions} H. Berestycki, P.L. Lions: {\em Nonlinear scalar field equations. I - existence of a ground state}, Arch. Ration. Mech. Anal. {\bf 82} (1983), 313--345.

\bibitem{BerLionsII} H. Berestycki, P.L. Lions: {\em Nonlinear scalar field equations. II. Existence of infinitely many solutions}, Arch. Ration. Mech. Anal. {\bf 82} (1983), 347--375.


\bibitem{BerLionsInfZero} H. Berestycki, P.L. Lions:  {\em Existence d'\'etats multiples dans des \'equations de champs scalaires non lin\'eaires dans le cas de masse nulle}, C. R. Acad. Sci. Paris S\'er. I Math. {\bf 297}, (1983), 267--270.

\bibitem{Buryak}  A. V. Buryak, P. Di Trapani, D. V. Skryabin, S. Trillo: {\em Optical solitons due to quadratic nonlinearities: from basic physic to futuristic applications}, Physics Reports, \textbf{370} (2002), 63--235.

\bibitem{CotiZelatiRab} V. Coti Zelati, P.H. Rabinowitz: {\em Homoclinic type solutions for a semilinear elliptic PDE on $\R^N$}, Comm. Pure and Applied Math. {\bf 45}, no. 10, (1992), 1217--1269.



\bibitem{SoliminiDev} G. Devillanova, S. Solimini, {\em Some remarks on profile decomposition theorems},
Adv. Nonlinear Stud. {\bf 16} (2016), no. 4, 795--805. 

\bibitem{Gammal} A. Gammal, T. Frederico, Lauro Tomio, Ph. Chomaz, {\em Atomic Bose–Einstein condensation
	with three-body interactions and collective excitations}, J. Phys. B: At. Mol. Opt. Phys. {\bf 33}
(2000), 4053--4067.

\bibitem{Gerard} P. G\'erard: {\em Description du d\'efaut de compacit\'e de l'injection de Sobolev}, ESAIM: Control, Optimisation and Calculus of Variations {\bf 3} (1998), 213--233. 


\bibitem{HmidiKeraani} T. Hmidi, S. Keraani: {\em Remarks on the blow-up for the $L^2$-critical nonlinear Schr\"odinger equations}, SIAM J. Math. Anal. {\bf 38} (2006), no. 4, 1035--1047.

\bibitem{Hirata} J. Hirata, N. Ikoma, K. Tanaka: {\em Nonlinear scalar field equations in $\R^N$: mountain pass and symmetric mountain pass approaches},  Topol. Methods Nonlinear Anal. {\bf 35}, (2010), 253--276.

\bibitem{Jeanjean} L. Jeanjean: {\em Existence of solutions with prescribed norm for semilinear elliptic equations}, Nonlinear Anal. {\bf 28} (1997), 1633--1659.

\bibitem{JeanjeanTanaka} L. Jeanjean, K. Tanaka: {\em A remark on least energy solutions in $\R^N$}, Proc. Amer. Math. Soc. {\bf 131} (2003), 2399--2408. 

\bibitem{LuJeanjean} L. Jeanjean, S.-S. Lu: {\em Nonlinear scalar field equations with general nonlinearity},  Nonlinear Anal. {\bf 190} (2020), 111604, 28 pp.


\bibitem{Lions82} P.-L. Lions: {\em Sym\'etrie et compacit\'e dans les espaces de Sobolev}, J. Funct. Anal. {\bf 49} (1982), no. 3, 315--334.

\bibitem{Lions84} P.-L. Lions: {\em The concentration-compactness principle in the calculus of variations. The locally compact case. Part I and II}, Ann. Inst. H. Poincaré, Anal. Non Linéare., {\bf 1}, (1984), 109--145; and 223--283.


\bibitem{Lorca} S. Lorca, P. Ubilla: {\em Symmetric and nonsymmetric solutions for an elliptic equation on $\R^N$}, Nonlinear Anal. {\bf 58}, 961--968. 


\bibitem{Marzantowicz} W. Marzantowicz: {\em 
Geometrically distinct solutions given by symmetries of variational problems with the $O(N)$-symmetry}, Commun. Contemp. Math. {\bf 21} (2019), no. 8, 1850080, 34 pp.

\bibitem{MederskiENZ} J. Mederski: {\em Ground states of time-harmonic semilinear Maxwell equations in $\R^3$ with vanishing permittivity}, Arch. Rational Mech. Anal. {\bf 218} (2), (2015), 825--861.


\bibitem{MederskiZeroMass} J. Mederski: {\em General class of optimal Sobolev inequalities and nonlinear scalar field equations}, arXiv:1812.11451.

\bibitem{Musso} M. Musso, F.  Pacard, J.  Wei: {\em Finite-energy sign-changing solutions with dihedral symmetry for the stationary nonlinear Schr\"odinger equation}, J. Eur. Math. Soc. {\bf 14} (2012), no. 6, 1923--1953. 

\bibitem{Nawa} H. Nawa: {\em  Mass concentration phenomenon for the nonlinear Schr\"odinger equation with the critical power nonlinearity. II } Kodai Math. J. {\bf 13} (1990), no. 3, 333--348.

\bibitem{Palais} R.S. Palais: {\em The principle of symmetric criticality}, Commun. Math. Phys. {\bf 69} (1979), 19--30. 


\bibitem{Rabinowitz:1986} P. Rabinowitz:
{\em Minimax Methods in Critical Point Theory with Applications to Differential Equations},
CBMS Regional Conference Series in Mathematics, Vol. {\bf 65}, Amer. Math. Soc., Providence, Rhode Island 1986.

\bibitem{Shatah} J. Shatah {\em Unstable ground state of nonlinear Klein–Gordon equations}, Trans. Amer. Math. Soc.
{\bf 290} (2) (1985) 701--710.

\bibitem{Strauss} W.A. Strauss: {\em Existence of solitary waves in higher dimensions}, Commun. Math. Phys. {\bf 55}, (1977), 149--162.


\bibitem{StruweSplitting} M. Struwe: {\em A global compactness result for elliptic boundary value problems involving limiting nonlinearities}, Math. Z. {\bf 187} (1984), no. 4, 511--517.

\bibitem{Struwe} M. Struwe: {\em Variational Methods}, Springer 2008.

\bibitem{SzulkinWeth} A. Szulkin, T. Weth: {\em Ground state solutions for some indefinite variational problems}, J. Funct. Anal. {\bf 257} (2009), no. 12, 3802--3822. 

\bibitem{Willem} M. Willem: {\em Minimax Theorems}, Birkh\"auser Verlag (1996).

\bibitem{Ziemer} W. P. Ziemer: {\em Weakly differentiable functions. Sobolev spaces and functions of bounded variation}, Graduate Texts in Mathematics, 120. Springer-Verlag, New York (1989).


\end{thebibliography}
\end{document}